\documentclass[11pt]{amsart}
\usepackage[T1]{fontenc}
\usepackage[utf8]{inputenc}
\usepackage{lmodern}
\usepackage{amsmath,amssymb,mathtools}
\usepackage{tikz-cd}
\usepackage[expansion=false]{microtype}
\usepackage[margin=1.05in]{geometry}
\usepackage[hidelinks]{hyperref}
\hypersetup{pdfsubject={Weighted jets, rational curves, and the Miyaoka--Mori criterion}}
\numberwithin{equation}{section}
\newtheorem{theorem}{Theorem}[section]
\newtheorem{proposition}[theorem]{Proposition}
\newtheorem{lemma}[theorem]{Lemma}
\newtheorem{corollary}[theorem]{Corollary}
\theoremstyle{definition}
\newtheorem{definition}[theorem]{Definition}
\newtheorem*{remark}{Remark}
\newcommand{\PP}{\mathbb P}
\newcommand{\OO}{\mathcal O}
\newcommand{\Pline}{\mathbb P_{\mathrm{lines}}}
\newcommand{\NE}{\overline{\operatorname{NE}}}
\newcommand{\proofheading}[1]{%
  \par\noindent{\normalfont\bfseries #1}\enspace\ignorespaces
}
\DeclareMathOperator{\Tot}{Tot}
\DeclareMathOperator{\Spec}{Spec}
\DeclareMathOperator{\Proj}{Proj}
\DeclareMathOperator{\ord}{ord}
\begin{document}
\title[CONSTRUCTING RATIONAL CURVES VIA JETS]{Constructing Rational Curves via Jets on Projective Varieties with Non-Nef Canonical Bundle}
\author{Bin Guo}
\address{Academy of Mathematics and Systems Science, Chinese Academy of Sciences, Beijing 100190, China}
\email{guobin@amss.ac.cn}
\author{Song-Yan Xie}
\address{State Key Laboratory of Mathematical Sciences, Academy of Mathematics and Systems Science, Chinese Academy of Sciences, Beijing 100190, China; School of Mathematical Sciences, University of Chinese Academy of Sciences, Beijing 100049, China}
\email{xiesongyan@amss.ac.cn}
\date{September 23, 2026}
\subjclass[2020]{Primary 14E30; Secondary 14J45, 14C17, 14D15}

\begin{abstract}
We give an algebraic proof in characteristic zero of the Miyaoka--Mori
criterion: every point of a curve of negative canonical degree on a smooth
projective variety lies on a rational curve. Our jet technique gives, in
addition, an effective numerical decomposition of the original curve
class. For each prescribed point, the decomposition contains a rational
curve through that point, with a positive coefficient independent of the
point and with anticanonical degree at most $\dim X+1$. Together with
BDPP cone duality, this recovers the projective uniruledness criterion
over $\mathbb C$.

The main result of this paper was obtained using the Pharos system. A detailed
report on the use of Pharos and on the Lean~4 formalization of the main
arguments is given in the Appendix B, written by Bin Dong, Guoxiong Gao,
Zeming Sun, and Bin Wu.
\end{abstract}
\maketitle

\section{Introduction}\label{sec:introduction}

The criterion of Miyaoka and Mori~\cite{MM86} produces rational
curves through every point of a curve of negative canonical degree. Its
classical proof uses Frobenius in positive characteristic
to amplify the anticanonical degree without changing the genus, then
applies Mori's bend-and-break~\cite{Mor79}. Boundedness and
specialization bring the rational curves back to characteristic zero;
see~\cite[Proposition~7.1, \S7.2, and Theorem~7.7]{Deb}
and~\cite[II, Theorem~5.8]{Kol96}.

For nearly five decades, this bend-and-break framework was the general
route to rational curves on Fano manifolds. Du, Guo, and Xie~\cite{DGX26}
give an analytic construction under the sole Fano hypothesis:
positive Ricci curvature controls holomorphic disks, producing
finite-area entire curves that extend to rational curves through
prescribed points. Here we return to the curvewise hypothesis of
Miyaoka--Mori. Weighted jets replace Frobenius amplification, and the
resulting ruled surface produces rational curves through the points
of the given curve.

\begin{theorem}[Miyaoka--Mori]\label{thm:main}
Let $\Bbbk$ be an algebraically closed field of characteristic zero,
let $X$ be a smooth projective variety over $\Bbbk$, and let
$f:C\to X$ be a nonconstant morphism from a smooth connected
projective curve over $\Bbbk$. If
\[
 -K_X\cdot f_*[C]>0,
\]
then, for every closed point $x\in f(C)$, there is a nonconstant morphism
$b:\PP^1\to X$ with $b(0)=x$.
\end{theorem}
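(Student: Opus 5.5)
The plan is to replace Frobenius amplification by a characteristic-zero deformation count on weighted jets, and then run bend-and-break on a ruled surface. Fix $x=f(c_0)$ and set $n=\dim X$, $g=g(C)$, $d=-K_X\cdot f_*[C]>0$.

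The first step is a deformation count with a jet condition. The morphisms $C\to X$ whose $(m-1)$-jet at $c_0$ agrees with that of $f$ have tangent space $H^0(C,f^*T_X\otimes\OO_C(-mc_0))$. Their expected dimension at $[f]$ is
\[
 \chi\bigl(f^*T_X(-mc_0)\bigr)=d+n(1-g)-nm .
\]
This is positive only for $m<d/n+1-g$, so a single curve does not suffice when $g$ is large. Frobenius would multiply $d$ without changing $g$. Instead I will use weighted jets: pass to a cover or to a family of maps from $C$ to a weighted projective bundle $\PP(1,\dots,1,w)$ over $X$, built from $\Tot$ of a line bundle. The aim is that the weighted jet conditions imposed along a fibre cost fewer parameters than the degree they generate. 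Concretely, I plan to work with the $k$-fold fibre product $C\times\cdots\times C$, or a cyclic cover $C'\to C$ of degree $k$ totally ramified at $c_0$. On such a cover, $d$ scales by $k$ while $g$ scales roughly by $k$ as well. The weighted jet order at the ramification point then gains a factor, because a jet of order $m$ on $C'$ at the totally ramified point corresponds to a weighted jet on $C$. The goal is an inequality of the shape $\dim_{[f]}\operatorname{Hom}(C',X;\text{jet})\ge 1$ with the jet order $m'$ tending to infinity. This count is the main obstacle. Frobenius is genuinely needed in the classical proof because $g(C')$ grows with $k$, so the point is precisely to choose the weights so that the weighted jet conditions at $c_0$ absorb the genus growth. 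My first attempt will be to impose the vanishing only on the components of the jet transverse to $f$, using the splitting of $f^*T_X$ into its tangential and normal parts. I will then track $h^1$ via Serre duality against $K_C+mc_0$.

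Once a positive-dimensional family of maps fixing a high-order jet at $c_0$ exists, I take a one-parameter subfamily $T$ and a smooth compactification $\bar T$. This gives a rational map $C\times\bar T\to X$ whose image point at $c_0$ stays fixed. The ruled surface from the introduction arises after resolving the indeterminacy. Mori's bend-and-break lemma, in its rigidity form, is purely algebraic and holds in any characteristic. It says that a nontrivial family of maps $C\to X$ fixing a point must break, and the breaking produces rational curves in the exceptional fibres over $\{c_0\}\times\bar T$. Since the family fixes $x=f(c_0)$, these exceptional rational curves pass through $x$.

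Finally, a degree bound lets the breaking be iterated. I will bound $-K_X\cdot[\text{rational component}]\le n+1$ by repeating the count for a rational curve $b$ with one point fixed: a rational curve of higher degree still moves while fixing two points, so it breaks again. Tracking the coefficients then gives the numerical decomposition of $f_*[C]$ promised in the abstract, and the component through $x$ yields $b$ with $b(0)=x$.
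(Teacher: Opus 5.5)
There is a genuine gap, exactly at the step you call the main obstacle. You never produce a positive-dimensional family of maps through $x$, and the mechanisms you propose cannot produce one in characteristic zero. For a finite cover $\pi:C'\to C$ of degree $k$ with ramification divisor $R$, Riemann--Hurwitz gives
\[
 \chi\bigl(C',\pi^*f^*T_X\bigr)=kd+n(1-g')
 =k\bigl(d-n(g-1)\bigr)-\tfrac{n}{2}\deg R .
\]
So $d$ is multiplied by $k$, while $g'-1$ is multiplied by at least $k$. The lower bound for $\dim_{[f\circ\pi]}\operatorname{Hom}(C',X;p'\mapsto x)$ is this number minus $n$. Its sign is governed by $d-n(g-1)$ whatever cover you take, and total ramification at $c_0$ only lowers it. Jet conditions at $p'$, whether full or only in transverse directions, replace $\pi^*f^*T_X$ by a subsheaf such as $\pi^*f^*T_X(-mp')$ or an elementary modification of it. That subsheaf has no larger degree, so $\chi$ can only drop. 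These are conditions on sections over $C'$, not weighted jets on $C$ obtained for free. Hence, when $g-1\geq d/n$ (precisely the case where Frobenius is needed), no cover, weighting, or partial jet condition makes the lower bound positive, and the rigidity form of bend-and-break has no family to act on. The product $C\times\cdots\times C$ is not a curve, and you give no deformation theory for it.

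The missing idea is that the paper never deforms $f$; it amplifies an intersection number instead of a degree. On the weighted projectivization $Y_k^{\mathrm{GG}}$ of relative based $k$-jets in the twisted affine cone $\mathcal Z$ over $C$, the tautological class satisfies $H_k^{(n+1)k}=-v_kdh_k$, where $h_k=\sum_{q\leq k}1/q\to\infty$. So $H_k+\theta\pi_k^*[p]$ is not nef, and some horizontal curve has very negative $H_k$-degree. After a finite cover $\rho$, this yields a line bundle $L$ with $\deg L/\deg\rho>dh_k/(2(n+1)k)$ carrying a nonconstant based jet $\widetilde C_{(k)}(L)\to\mathcal Z^\times$. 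Its ambient coefficients are sections of $\rho^*A\otimes L^{-q}$, so they vanish for $q>a\deg\rho/\deg L$, a bound that is $o(k)$. Substituting into the defining equations then gives polynomials of degree less than $k$ that vanish modulo $t^{k+1}$, hence identically. The jet therefore extends to a map from the ruled surface $\Pline(\OO_{\widetilde C}\oplus L)$, after blowups away from the zero section. This map equals $f\circ\rho$ on the zero section and is nonconstant on general rulings. The rational curve through $x$ is the first noncontracted component in the tree of the ruling fiber over a preimage of $x$. No $h^1$ estimate is needed, and bend-and-break enters only afterwards, for the bound $-K_X\cdot R_x\leq n+1$.
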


The hypothesis imposes positivity of $-K_X$ only on the given curve;
$-K_X$ need not be nef, and $f(C)$ need not move in a covering family.
The construction also yields an effective numerical decomposition of
$f_*[C]$. In the theorem below, $\equiv$ denotes numerical equivalence
of one-cycles, and an effective $\mathbb Q$-cycle means a finite sum of
integral curves with nonnegative rational coefficients; its components
need not be rational curves.

\begin{theorem}[Effective domination]\label{thm:effective-domination}
Under the hypotheses of Theorem~\ref{thm:main}, there is a rational
number $\mu>0$ with the following property. For every closed point
$x\in f(C)$, one can choose an integral rational curve $R_x\subset X$
through $x$ and an effective $\mathbb Q$-cycle $Q_x$ on $X$ such that
\[
 f_*[C]\equiv\mu R_x+Q_x,
 \qquad -K_X\cdot R_x\leq\dim X+1.
\]
In particular, this same curve $R_x$ satisfies
\[
 P\cdot R_x\leq\frac{P\cdot f_*[C]}{\mu}
\]
for every nef real Cartier divisor $P$ on $X$.
\end{theorem}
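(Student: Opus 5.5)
The plan is to deduce Theorem~\ref{thm:effective-domination} from the classical bend-and-break package in characteristic zero, after producing by the jet construction a family of curves whose anticanonical degree grows faster than its genus, and then to track classes throughout. Write $n=\dim X$, $g=g(C)$, and $d=-K_X\cdot f_*[C]>0$.

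\textbf{Step 1: a family with many deformations.} The weighted-jet construction should replace Frobenius amplification. It should give, for each integer $m\gg0$, a smooth curve $C_m$ and a morphism $f_m:C_m\to X$. I would aim to control these maps as follows:
\[
 (f_m)_*[C_m]\equiv m\,f_*[C],
 \qquad
 g(C_m)\le A\,m^{\theta}
\]
for some $\theta<1$. An alternative target is a ruled-surface model $S\to C$ with a map $S\to X$, in which sections in a suitable class $\sigma_m$ satisfy $\sigma_m\mapsto m f_*[C]$ plus a controlled correction. In either form the expected dimension of $\operatorname{Hom}(C_m,X;\,f_m|_{B})$, with $B$ a set of $k$ points fixed so that one of them maps to $x$, is
\[
 -K_X\cdot m f_*[C] + n(1-g(C_m)) - nk .
\]
This quantity is positive as soon as $md > n(g(C_m)+k-1)$. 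I expect this step to be the main obstacle. Without Frobenius one must obtain the deformation count directly from jets: first-order deformations of $f$ along with higher-order jet data at the points over $x$. Concretely, I would try to show that the space of weighted $m$-jets of maps along $C$ through $x$ has dimension growing like $md$, while the obstructions grow only like $n\cdot(\text{genus term})$. The obstructions should be estimated via $H^1(C,f^*T_X(-\text{jet divisor}))$ and Riemann--Roch.

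\textbf{Step 2: bend and break with class bookkeeping.} Given a positive-dimensional family of maps $C_m\to X$ fixing a point $c\mapsto x$, Mori's rigidity argument forces a degeneration. I would apply the standard bend-and-break lemma (\cite[II, Theorem~5.8]{Kol96}) to this family. The output is a decomposition
\[
 m\,f_*[C]\equiv (f'_m)_*[C_m]+\sum_i a_i [R_i]
\]
into effective integral curves, where some rational curve $R_i$ passes through $x$ and the $a_i$ are positive integers. Iterating the standard argument, as in \cite[Theorem~II.5.14]{Kol96}, lowers the anticanonical degree of the rational component through $x$. It breaks $R$ whenever $-K_X\cdot R>n+1$, since then rational curves through $x$ with one further fixed point still move. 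This yields $R_x$ through $x$ with $-K_X\cdot R_x\le n+1$, and the discarded pieces remain effective. Dividing by $m$ gives $f_*[C]\equiv \tfrac{a}{m}R_x+Q_x$ with $Q_x$ effective.

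\textbf{Step 3: uniformity of $\mu$.} The coefficient must be independent of $x$. Fix one value of $m$ for which the dimension inequality of Step~1 holds uniformly in the base point; this is possible because the bound involves only $d$, $g(C_m)$, $n$, and $k$. Each point then gets coefficient $a/m\ge 1/m$. Replacing $a/m$ by $\mu=1/m$ and absorbing the extra $(a/m-\mu)R_x$ into $Q_x$ keeps $Q_x$ effective.

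\textbf{Step 4: the final inequality.} The final claim is then immediate. For nef $P$ we have $P\cdot Q_x\ge0$, since $Q_x$ is effective. Hence $P\cdot f_*[C]\ge \mu\,P\cdot R_x$, which is the stated bound.
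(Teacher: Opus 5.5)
Your Step~1 carries the entire content of the theorem, and its numerical target cannot be reached in characteristic zero, so the deformation-count strategy fails in general. Take $B$ a curve of genus $2$, $\phi:B\to\PP^1$ its hyperelliptic double cover, $X=\PP^1\times B$, and $C$ the graph of $\phi$. Then $K_X\cdot C=-4+2=-2$, so $d=2$. Let $f_m:C_m\to X$ be a morphism from a smooth connected curve with $(f_m)_*[C_m]\equiv m[C]$. Intersecting with $\PP^1\times\{b\}$ shows that $C_m\to B$ has degree $m$, so Riemann--Hurwitz gives $g(C_m)\geq m+1$. Hence $g(C_m)\leq Am^\theta$ with $\theta<1$ is impossible. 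Moreover, your expected dimension $md+n(1-g(C_m))-nk\leq-2k$ is negative for every $m$ and every $k\geq1$. This is exactly what Frobenius circumvents in characteristic $p$, since a purely inseparable cover does not raise the genus. Your jet mechanism does not supply it either. Imposing jet conditions at points over $x$ only cuts down $\operatorname{Hom}(C,X)$, whose expected dimension $d+n(1-g)$ is fixed by the single map $f$, and $h^1(C,f^*T_X(-D))$ is nondecreasing in the divisor $D$.

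The missing idea is that the paper never trades degree against genus. It takes jets transverse to $C$: based $k$-jets in the twisted affine cone $\mathcal Z$ along the lift $s$ of $f$. These are weighted-projectivized to $\pi_k:Y_k^{\mathrm{GG}}\to C$, and a deformation to a split model gives $H_k^{(n+1)k}=-v_kdh_k<0$. Non-nefness of $H_k+\theta\pi_k^*[p]$ yields a horizontal curve. After a finite cover $\rho:\widetilde C\to C$, this gives a line bundle $L$ carrying the jet with $\mu=\deg L/\deg\rho>dh_k/(2(n+1)k)$. Positivity of $L$ kills every ambient jet coefficient of order above $a\deg\rho/\deg L$, and this bound is below $k/\delta$ once $dh_k>2(n+1)\delta a$. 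The defining equations, known modulo $t^{k+1}$, then hold exactly, so the jet becomes a polynomial map $\Tot(L)\to\mathcal Z$. This yields a ruled surface $\Phi:S\to X$ over $\widetilde C$ whose zero section maps via $f\circ\rho$. The decomposition is the push-forward of $\Sigma_0\equiv\Sigma_\infty+(\deg L)F$, namely $(\deg\rho)f_*[C]\equiv Z_\infty+(\deg L)Z_F+Z_{\mathrm{exc}}$. Here $R_x$ is a noncontracted component of a fiber over a preimage of $x$, reached from $\sigma(y)$ along contracted components. Thus $\mu$ is independent of $x$ with no deformation theory of $C$ at all. Your ``alternative target'' points toward this surface, but producing it is the whole difficulty, and once it exists no amplification of sections is needed. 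Bend-and-break enters only afterwards, on the rational curve $R_x$, where the count $-K_X\cdot R-n\geq2$ is automatic. There you should measure progress with an ample $H$, choosing $R$ of minimal $H$-degree with $[Z_C]-\mu[R]$ effective. The reason is that $-K_X$ is not nef, so the broken component through $x$ need not have smaller anticanonical degree. Your Steps~3 and~4 are fine once such an $x$-independent decomposition is available.
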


The coefficient $\mu$ is independent of $x$, and for each $x$ the curve
$R_x$ satisfies the displayed inequality for every nef divisor $P$.
The theorem does not assert that $-K_X\cdot R_x$ is positive.
We establish the effective decomposition before applying two-point
bend-and-break; this final step preserves $\mu$ and yields the stated
upper bound on $-K_X\cdot R_x$.

\subsection{The construction}
Set $n=\dim X$. Choose an embedding $X\hookrightarrow\PP^N$ and put
$A=f^*\OO_X(1)$. Write
\[
 a=\deg A>0,\qquad d=-K_X\cdot f_*[C]>0,\qquad
 h_k=\sum_{q=1}^k\frac1q.
\]
Let $F_1,\ldots,F_\nu$ be homogeneous equations defining
$X\subset\PP^N$. If $\nu>0$, set
$\delta=\max_{1\leq j\leq\nu}\deg F_j$; if $X=\PP^N$, take
$\nu=0$ and $\delta=1$.
Since $h_k\to\infty$, we can choose $k$ with
\[
 d h_k>2(n+1)\delta a.
\]

The homogeneous coordinate sections pull back to
$f_0,\ldots,f_N\in H^0(C,A)$. For each $c\in C$, the tuple
$s(c)=(f_0(c),\ldots,f_N(c))$ is a nonzero point of the affine cone over
$X$ in $A_c^{\oplus(N+1)}$. We consider germs of maps
$t\mapsto\gamma(t)$ into this cone based at $s(c)$, retaining their
expansions up to order $k$. Over $\mathbb C$ these are holomorphic
map germs. After a local trivialization of $A$, such a $k$-jet has the
ambient expansion
\[
 s(c)+B_1t+\cdots+B_kt^k\pmod{t^{k+1}},
\]
where every $F_j$ vanishes after substitution modulo $t^{k+1}$.
Because $X$ is smooth, the cone is smooth of dimension $n+1$ near
$s(c)$. Thus a based jet has $n+1$ local coefficients at each order
$q=1,\ldots,k$. The rescaling $t\mapsto\lambda t$ gives the order-$q$
coefficients weight $q$. Quotienting the nonconstant jets by this
action gives a family of weighted projective spaces
$\pi_k:Y_k^{\mathrm{GG}}\to C$. Each fiber has $n+1$ coordinates of
each weight $1,\ldots,k$, so the total dimension is $D:=(n+1)k$.

For $m>0$ sufficiently divisible, define the rational tautological
class
\[
 H_k:=\frac1m c_1\bigl(\OO_{Y_k^{\mathrm{GG}}}(m)\bigr).
\]
It is ample on each fiber. With
$v_k=H_k^{D-1}\cdot[\pi_k^{-1}(c)]=1/(k!)^{n+1}>0$, the intersection
formula gives
\[
 H_k^D=-v_k d h_k<0.
\]
Thus $H_k$ has negative top self-intersection on the total space,
although it has positive degree on every curve contained in a fiber.
Fix $p\in C$ and set $\theta=d h_k/(2D)$. Then
\[
 (H_k+\theta\pi_k^*[p])^D=-\tfrac12v_kd h_k<0.
\]
This class is not nef, so it has negative degree on some integral curve
$\Gamma\subset Y_k^{\mathrm{GG}}$. Since its degree on every vertical
curve is positive, $\Gamma$ maps onto $C$. From
$\pi_k^*[p]\cdot\Gamma=\deg(\Gamma\to C)$ we obtain
\[
 -\frac{H_k\cdot\Gamma}{\deg(\Gamma\to C)}
 >\frac{d h_k}{2D}.
\]

The inverse tautological class becomes an ordinary line bundle after a
finite cover of the normalization of $\Gamma$: write $\widetilde C$ for
the covering curve, and $\rho:\widetilde C\to C$ and
$\tau:\widetilde C\to Y_k^{\mathrm{GG}}$ for the induced maps. For
sufficiently divisible $m$, a line bundle $L$ on $\widetilde C$
satisfies
\[
 \tau^*\OO_{Y_k^{\mathrm{GG}}}(m)\simeq L^{-m}.
\]
On this cover, $L$ also carries an affine lift of the weighted jet.
Taking degrees gives the positive rational number
\[
 \mu:=\frac{\deg L}{\deg\rho}
 =-\frac{H_k\cdot\Gamma}{\deg(\Gamma\to C)}
 >\frac{d h_k}{2D}>0.
\]
This is the coefficient in Theorem~\ref{thm:effective-domination}, and
it is fixed before the prescribed point $x$ is chosen.

The positive degree of $L$ now forces the high-order coefficients to
vanish. For $1\leq q\leq k$, each order-$q$ ambient coefficient of the
lifted jet is a section of $\rho^*A\otimes L^{-q}$, a line bundle of
degree $\deg\rho\,(a-q\mu)$, so it vanishes once $q>a/\mu$. The chosen
bound on $k$, together with the lower bound on $\mu$, gives
$a/\mu<k/\delta$. Hence the surviving coefficients form a polynomial in
the fiber coordinate of $\Tot(L)$ of degree less than $k/\delta$.
Substitution into each $F_j$ produces a polynomial of
degree less than $k$. The jet condition says that this polynomial
vanishes modulo $t^{k+1}$, so it vanishes identically. The truncated
jet therefore extends to a polynomial map from $\Tot(L)$ to the
pullback of the twisted affine cone over $\widetilde C$.

The two arguments create a gap by amplifying different quantities.
Frobenius multiplies the anticanonical degree of a map from a curve
without changing its genus, providing enough deformations for
bend-and-break. Here increasing the jet order introduces the unbounded
factor $h_k$ into the normalized tautological intersection. The resulting
slope makes the degree of the surviving polynomial $o(k)$, so the defining
equations, which vanish through order $k$, must eventually hold
identically. Rational curves then arise from the ruled surface;
bend-and-break enters only afterward to bound their anticanonical degree.

The induced rational map to $X$ is nonconstant on a general fiber of
$\Tot(L)\to\widetilde C$. Suppose that its jet were constant after
projectivization; then every positive-order ambient coefficient would
be a scalar multiple of the base tuple $\rho^*s$, and those scalar
multiples would be sections of $L^{-q}$ for $q>0$. All of them vanish
because $\deg L>0$, contradicting the nonconstant based jet.
Compactify $\Tot(L)$ to the ruled surface
$\Pline(\OO_{\widetilde C}\oplus L)$. The rational map to $X$ can be
resolved by point blowups away from the zero section, so that the zero
section continues to map to $f\circ\rho$. The reduced support of a
fiber of the resolved surface is a connected tree of rational curves.
The total $\OO_X(1)$-degree on every fiber is positive, so each fiber
has a noncontracted component. Starting at the zero section and
following a chain to the first such component gives a rational curve
through any prescribed point of $f(C)$.
Comparing the zero and infinity sections yields the effective
decomposition in Theorem~\ref{thm:effective-domination}, with the same
$\mu$ for every prescribed point.

The harmonic sum $h_k$ also appears in Demailly's curvature
calculation for Green--Griffiths jets. For a directed bundle $(V,h)$
of rank $r$, Demailly's formula~\cite[(2.18)]{Dem11} for the averaged
horizontal curvature reads
\[
 \mathbb E(g_k)=\frac{h_k}{kr}\,\Theta_{\det(V^*),\det h^*}.
\]
When $V=T_M$ for a complex projective manifold $M$, the determinant on
the right is $K_M$. Merker's 2010 preprint constructed negatively
twisted jet differentials for smooth projective hypersurfaces of
general type~\cite{Mer15}, and Demailly then used the bigness of $K_M$
to obtain them on arbitrary projective manifolds of general
type~\cite[Theorem~0.5]{Dem11}. By the fundamental vanishing theorem of
entire curves~\cite{GG80,SY97,Dem11}, these differentials vanish on
the jets of every entire curve, and their differential equations can
then constrain, and in favorable cases exclude, entire curves.

Our based jets live in a twisted affine cone along $C$, and $\Gamma$
is a curve in their weighted projectivization rather than a jet
differential. The reciprocal jet weights nevertheless give the same
$h_k$, while the canonical contribution here is
$h_k(K_X\cdot f_*[C])=-d h_k<0$. The contrast is striking: in
hyperbolicity theory the jet calculation supplies obstructions to
entire curves, whereas here the negative tautological degree of $\Gamma$
produces a positive-degree parameter line, whose jets extend to
nonconstant maps $\mathbb A^1\to X$ on general fibers and then to
rational curves $\PP^1\to X$. The same harmonic weight thus unites
the two uses of jets: constraining curves and constructing them.

\subsection{Related work and applications}
The coefficient-vanishing step has a parallel in Campana--P\u{a}un's
algebraicity argument for foliations of positive minimal
slope~\cite[\S4.1, Proposition~4.5 and Lemma~4.6]{CamP19}.
There, positivity forces sufficiently high Taylor terms in the
foliation directions to vanish, bounding the growth of sections on
the Zariski closure of the leaf relation. Here $\deg L>0$ makes the
high-order ambient coefficients vanish, and the degrees of the defining
equations turn the remaining finite jet into a polynomial map.

For a projective variety over an algebraically closed field of arbitrary
characteristic, Jovinelly, Lehmann, and Riedl~\cite[Theorem~1.1]{JLR26}
sharpen the Miyaoka--Mori bound. Let $C$ be an integral curve in the
smooth locus of $X$ with $K_X\cdot C<0$.
For every nef real Cartier divisor $H$ and closed point $x\in C$,
they find a rational curve $R$ through $x$ such that
\[
 H\cdot R\leq(\dim X+1)\frac{H\cdot C}{-K_X\cdot C}.
\]
The constant $\dim X+1$ is optimal. We do not claim an optimal bound
for $\mu$; our choice of jet order depends on the embedding and its
defining equations. The effective decomposition fixes $\mu$
independently of $x$ and, for each $x$, gives one curve $R_x$ whose
inequality in Theorem~\ref{thm:effective-domination} holds for every
nef divisor.

We use BDPP cone duality~\cite[Theorem~0.2]{BDPP13} as an external
input. If $K_X$ is not pseudoeffective, it gives a covering family of
curves of negative canonical degree. The embedding-degree bound from
our construction is uniform along this family, so the resulting
rational curves form a covering family. In this way we recover
the projective uniruledness criterion in characteristic
zero~\cite[Corollary~0.3 and Theorem~2.6]{BDPP13}.

\begin{theorem}[BDPP]\label{thm:bdpp}
Let $X$ be a smooth connected complex projective variety of positive
dimension. Then $X$ is uniruled if and only if $K_X$ is not
pseudoeffective.
\end{theorem}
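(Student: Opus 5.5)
The plan proves the two directions separately. For the forward direction, suppose $X$ is uniruled. Then there is a dominant family of rational curves, so a general point lies on a free rational curve $b:\PP^1\to X$. Freeness means $b^*T_X$ is globally generated, so $-K_X\cdot b_*[\PP^1]=\deg b^*T_X\geq 2>0$. Since these curves cover $X$, their class is movable. A pseudoeffective divisor has nonnegative degree on a movable class, because such a class pairs nonnegatively against effective divisors that do not contain a general member. Hence $K_X$ is not pseudoeffective. This direction is standard and needs none of the jet machinery.

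For the converse, assume $K_X$ is not pseudoeffective. By BDPP cone duality \cite[Theorem~0.2]{BDPP13}, the dual of the pseudoeffective cone is the closed cone of movable curves, and movable classes are limits of classes of curves $\mu_*(H_1\cap\cdots\cap H_{n-1})$ cut out by very ample divisors on modifications $\mu:\widetilde X\to X$. So there is such a strongly movable class $\alpha$ with $K_X\cdot\alpha<0$. Fix the modification $\mu$ and very ample $H_1,\dots,H_{n-1}$ on $\widetilde X$. The complete intersections of general members form a family $\{C_u\}_{u\in U}$ of smooth connected curves parametrized by an open set $U$ in a product of projective spaces. Their images cover a dense open subset of $X$, and all of them have the same class with $-K_X\cdot f_{u*}[C_u]=d>0$.

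Apply the construction behind Theorem~\ref{thm:main} to each $f_u:C_u\to X$ with one fixed embedding $X\hookrightarrow\PP^N$. The parameters $a=\deg f_u^*\OO_X(1)$ and $d$ are constant in $u$, and $n$ and $\delta$ depend only on the embedding. So a single jet order $k$ works for every $u$. For each $u$, Theorem~\ref{thm:effective-domination} (with $P=\OO_X(1)$) gives, through every point $x\in f_u(C_u)$, a rational curve $R_x$ with
\[
 \OO_X(1)\cdot R_x\leq a/\mu_u<k/\delta .
\]
The inequality $\mu_u>dh_k/(2D)$ together with the choice of $k$ bounds this degree by a constant $e$ independent of $u$.

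The remaining step is to turn these rational curves into a covering family. Rational curves in $X\subset\PP^N$ of degree at most $e$ are parametrized by finitely many quasi-projective components of $\operatorname{Hom}(\PP^1,X)$ of bounded degree; alternatively one can use the Chow variety. Consider the evaluation maps $\PP^1\times\operatorname{Hom}_{\leq e}(\PP^1,X)\to X$ sending $(0,b)$ to $b(0)$. Their images form a finite union of constructible sets. By the above, this union contains $\bigcup_u f_u(C_u)$, which is dense in $X$. Hence some component's evaluation map is dominant, and since the image of a proper family is closed, it is surjective after passing to the closure in the Chow variety. This exhibits $X$ as covered by rational curves, so $X$ is uniruled.

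I expect the main obstacle to be the uniformity claim. One must check that the embedding-degree bound does not depend on $u$, in particular that $\mu_u$ is bounded below uniformly. This should follow from the displayed lower bound $\mu>dh_k/(2D)$, which involves only $d,k,n$. The second delicate point is handling curves $C_u$ that meet the exceptional locus of $\mu$; these are avoided by taking $u$ general.
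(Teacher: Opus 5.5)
Your proposal is correct and follows the paper's proof essentially step for step. The forward direction is the same free-curve argument. For the converse you use, as the paper does, a strongly movable BDPP class realized by complete-intersection curves on a modification, one jet order $k$ fixed by the constant invariants $a,d,n,\delta$, a $u$-independent $\OO_X(1)$-degree bound, and finitely many $\operatorname{Hom}_e(\PP^1,X)$ with dominant evaluation.

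There are two minor differences. First, you take the degree bound from Theorem~\ref{thm:effective-domination} with $P=\OO_X(1)$, which passes through bend-and-break; the paper reads it directly off \eqref{eq:embedding-bound} and Lemma~\ref{lem:prescribed-point}. Since the degree is an integer, $\OO_X(1)\cdot R_x\le a/\mu_u$ is the same as $\le r_k$, so the two bounds agree. Second, your worry about the exceptional locus is unnecessary: a general $C_u$ does meet exceptional divisors, but only $C_u\not\subset$ (exceptional locus) is needed, because Theorem~\ref{thm:main} applies to any nonconstant $f$.
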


Ou proved the compact K\"ahler analogue~\cite[Theorem~1.1]{Ou25}, and
Cao--P\u{a}un gave a further proof and
refinement~\cite[Corollaries~5.3 and~6.3]{CP25}. Tosatti subsequently
established transcendental cone duality~\cite[Theorem~1.1 and
Corollary~1.2]{Tos26}.

Section~\ref{sec:weighted-jets} constructs the weighted jet space,
computes its tautological intersection, and produces a negative
horizontal curve. Section~\ref{sec:parameter-line} realizes the
inverse tautological class by a line bundle carrying an affine jet.
Sections~\ref{sec:realization} and~\ref{sec:completion} turn that jet
into rational curves through prescribed points, and
Sections~\ref{sec:numerical} and~\ref{sec:bdpp} give the effective
decomposition and the projective uniruledness criterion.

\section{Weighted jets and the intersection formula}
\label{sec:weighted-jets}

Retain the data $f:C\to X$ of Theorem~\ref{thm:main}, over an
algebraically closed field $\Bbbk$ of characteristic zero. All schemes
and morphisms in the construction are over $\Bbbk$. Points of curves
used to specify values of morphisms from $\PP^1$ are closed points.
Fix an embedding $X\hookrightarrow\PP^N$ with homogeneous coordinates
$[z_0:\cdots:z_N]$. Put $A=f^*\OO_X(1)$, and write
\[
 f_i=f^*(z_i|_X)\in H^0(C,A),\qquad 0\leq i\leq N.
\]
These coordinate sections have no common zero. The construction uses the
numerical quantities
\[
 \begin{aligned}
 n&=\dim X\geq1,\qquad a=\deg A>0,\\
 d&=-K_X\cdot f_*[C]=\deg f^*T_X>0.
 \end{aligned}
\]

Choose homogeneous generators $F_1,\ldots,F_\nu\in\Bbbk[z_0,\ldots,z_N]$
of the homogeneous ideal $I_X$ of $X\subset\PP^N$. Write $d_j=\deg F_j$
and choose $\delta\geq1$ with $d_j\leq\delta$ for every $j$.
If $I_X=0$, take $\delta=1$.

\subsection{The twisted affine cone}
For a vector bundle $\mathcal F$ on $C$, write
$\Tot(\mathcal F)=\Spec_C\operatorname{Sym}(\mathcal F^\vee)$ for its total space.
For a morphism $u:B\to C$, a map $B\to\Tot(\mathcal F)$ over $C$ is the same as
a section of $u^*\mathcal F$. In particular, the fiber over $c\in C$ is the
vector space $\mathcal F_c$.
Twisting the cone by $A=f^*\OO_X(1)$ makes the homogeneous coordinate
sections into a nowhere-zero section of the cone. The additional scalar
tangent direction has degree zero, so the relative tangent bundle along
this section has the same degree as $f^*T_X$.

\begin{definition}\label{def:cone}
The \emph{twisted affine cone} is the closed subscheme
\[
 \mathcal Z\subset\Tot\bigl(A^{\oplus(N+1)}\bigr)
\]
cut out by $F_j=0$, where each $F_j$ is evaluated on the fiber
coordinates and is a section of the pullback of $A^{\otimes d_j}$
from $C$. Write $\mathcal Z^\times$ for the complement of the zero
section.
\end{definition}

In a local frame $e$ of $A$, a vector has the form
$(u_0e,\ldots,u_Ne)$. If $e'=he$, its coordinates become
$u_i'=h^{-1}u_i$, and homogeneity gives
$F_j(u')=h^{-d_j}F_j(u)$. Thus the local zero loci glue.

Each $f_i(c)$ lies in $A_c$. The sections $f_i$ satisfy every
$F_j$ and have no common zero, so their tuple gives a section of the
natural projection $\mathcal Z^\times\to C$:
\[
 s:C\longrightarrow\mathcal Z^\times,\quad
 c\longmapsto\bigl(c;f_0(c),\ldots,f_N(c)\bigr).
\]

Projectivization gives a morphism $p:\mathcal Z^\times\to C\times X$.
In a local frame $e$ of $A$, it is
\[
 \bigl(c;u_0e(c),\ldots,u_Ne(c)\bigr)
 \longmapsto\bigl(c,[u_0:\cdots:u_N]\bigr).
\]
Take the line bundle
\[
 \mathcal M=\operatorname{pr}_C^*A\otimes
 \operatorname{pr}_X^*\OO_X(-1)
\]
on $C\times X$. The inclusion
$\OO_X(-1)\hookrightarrow\OO_X^{\oplus(N+1)}$ gives a morphism from
the punctured total space $\Tot_{C\times X}(\mathcal M)^\times$ to
$\Tot_C(A^{\oplus(N+1)})$: a nonzero tensor over $(c,x)$ is sent to the
corresponding nonzero vector in $A_c^{\oplus(N+1)}$. Its projective
class is $x$, so the defining equations $F_j$ vanish. Conversely, a
nonzero vector of $\mathcal Z_c$ determines its projective point
$x\in X$ and is then a nonzero element of
$A_c\otimes\OO_X(-1)_x$. The two constructions are inverse in local
trivializations and hence give an isomorphism over $C\times X$
\begin{equation}\label{eq:punctured-line-bundle}
 \mathcal Z^\times\simeq
 \Tot_{C\times X}(\mathcal M)^\times.
\end{equation}
In particular, $\mathcal Z^\times$ is smooth over $C$ of relative
dimension $n+1$. Under \eqref{eq:punctured-line-bundle}, the composite
$p\circ s$ equals $(\mathrm{id}_C,f)$: the section $s$ is a lift of the
graph of $f$.
Set
\[
 E=s^*T_{\mathcal Z^\times/C},\qquad
 E_c=T_{\mathcal Z_c^\times,s(c)}.
\]
Thus $E$ records the tangent directions within the cone fibers along $s$.
For the punctured line bundle in \eqref{eq:punctured-line-bundle}, the
relative tangent sequence over $C$ has vertical scalar direction
$\OO$ and quotient the pullback of $T_X$. Pulling this sequence back by
$s$, whose image in $C\times X$ is the graph of $f$, gives the relative
tangent sequence
\begin{equation}\label{eq:cone-tangent}
 0\longrightarrow\OO_C\longrightarrow E\longrightarrow f^*T_X\longrightarrow0.
\end{equation}
The first term is trivialized by the infinitesimal scalar action at
$s$. In particular,
\begin{equation}\label{eq:cone-degree}
 \operatorname{rk}E=n+1,\qquad \deg E=d.
\end{equation}
The cone therefore adds one scalar tangent direction and leaves the
degree $d$ unchanged.

\subsection{Relative based jets and weighted projectivization}
Fix a jet order \(k\geq1\), and let \(V\) be a smooth variety of dimension
\(\ell\). The \(k\)-jet scheme \(J_k(V)\) parametrizes infinitesimal curves
\[
 \gamma:\Spec\bigl(\Bbbk[t]/(t^{k+1})\bigr)\longrightarrow V.
\]
For \(v\in V\), write
\[
 J_k(V)_v:=\{\gamma\in J_k(V):\gamma(0)=v\}
\]
for the space of \(k\)-jets based at \(v\). In centered local étale coordinates
\(x_1,\ldots,x_\ell\) at \(v\), every such jet has a unique expansion
\[
 x_i(t)=x_{i,1}t+\cdots+x_{i,k}t^k,
 \qquad 1\leq i\leq \ell.
\]
Reparametrization \(t\mapsto\lambda t\) sends
\[
 x_{i,q}\longmapsto \lambda^q x_{i,q},
 \qquad \lambda\in\Bbbk^\times,
\]
so the coefficient of order \(q\) has weight \(q\).

We return to the smooth morphism $\mathcal Z^\times\to C$ and its section
$s$. A relative based $k$-jet over a point $c\in C$ is such an
infinitesimal curve in $\mathcal Z_c^\times$, based at $s(c)$.
Scheme-theoretically, the projection
$\mathcal Z^\times\to C$ induces
\[
 J_k(\mathcal Z^\times)\longrightarrow J_k(C),
\]
and the constant jets give a natural section $C\to J_k(C)$. Thus
\[
 J_k(\mathcal Z^\times/C)
 :=J_k(\mathcal Z^\times)\times_{J_k(C)}C
\]
selects the jets whose projection to $C$ is constant in the jet
parameter.
Evaluation at $t=0$ sends a jet to its center. To require a relative jet
$\gamma\in J_k(\mathcal Z^\times/C)$ lying over $c\in C$ to satisfy
$\gamma(0)=s(c)$, we further take the fiber product with the section
$s:C\to\mathcal Z^\times$:
\[
 J_k^s=J_k^s(\mathcal Z/C)
 :=J_k(\mathcal Z^\times/C)\times_{\mathcal Z^\times,s}C.
\]
Thus, over a point \(c\in C\), a relative based jet has an expansion
in the ambient vector bundle
\begin{equation}\label{eq:ambient-jet}
 s(c)+B_1t+\cdots+B_kt^k,
 \qquad
 B_q\in A_c^{\oplus(N+1)},
\end{equation}
where substitution into the defining equations \(F_j\) of the cone
vanishes modulo \(t^{k+1}\).

The vectors \(B_q\) give an ambient description, but the equations
\(F_j\) constrain them. Since
$\mathcal Z^\times\to C$ is smooth of relative dimension $n+1$ and has
the section $s$, after refining an affine open cover
$\{U_\alpha\}$ of $C$ we may choose, for each $\alpha$, an open
neighborhood $V_\alpha$ of $s(U_\alpha)$ and an étale morphism over
$U_\alpha$
\[
 (\operatorname{pr}_C\circ p,z_\alpha):V_\alpha\longrightarrow
 U_\alpha\times\mathbb A^{n+1}
\]
which sends $s|_{U_\alpha}$ to the zero section. We may refine further
so that $E|_{U_\alpha}$ is trivial. Writing
$z_{\alpha,1},\ldots,z_{\alpha,n+1}$ for the affine coordinates, the
étale base-change property of jet schemes~\cite[Lemma~2.9]{EM09} and
the two base changes that impose constant projection to $C$ and center
$s$ together give
\begin{equation}\label{eq:based-jet-chart}
 J_k^s|_{U_\alpha}
 \simeq U_\alpha\times\mathbb A^{(n+1)k}.
\end{equation}
Write \(x_{\alpha,i,q}\) for the coefficient of \(t^q\) in
\(z_{\alpha,i}\), and put
\[
 x_{\alpha,q}=(x_{\alpha,i,q})_{i=1}^{n+1},
 \qquad
 x_\alpha=(x_{\alpha,1},\ldots,x_{\alpha,k}).
\]
For a fixed jet, its $x_{\alpha,i,q}$ and $B_q$ describe the same
truncated map in centered étale and ambient cone coordinates,
respectively. The change between them is generally nonlinear.

Let $q_k:J_k^s\to C$ be the structure morphism and set
\[
 \mathcal S_k=(q_k)_*\OO_{J_k^s}.
\]
Reparametrization $t\mapsto\lambda t$ induces a grading of this sheaf of
algebras in which $x_{\alpha,i,q}$ has weight $q$. The jet-coordinate
changes commute with this action, hence preserve the grading. Locally,
\[
 \mathcal S_k|_{U_\alpha}
 \simeq
 \OO_{U_\alpha}
 [x_{\alpha,i,q}:1\leq i\leq n+1,\ 1\leq q\leq k],
 \qquad \deg x_{\alpha,i,q}=q,
\]
and globally
\[
 \mathcal S_k=\bigoplus_{m\geq0}(\mathcal S_k)_m,
 \qquad (\mathcal S_k)_0=\OO_C.
\]
Each $(\mathcal S_k)_m$ is locally free, with local basis the monomials of
weight $m$ in the coordinates $x_{\alpha,i,q}$.

We write \(\Pline(\mathcal F)\) for the bundle of lines in a vector
bundle \(\mathcal F\), with tautological subline
$\OO_{\Pline(\mathcal F)}(-1)$.
Define the relative weighted projectivization by
\[
 Y_k^{\mathrm{GG}}:=\Proj_C\mathcal S_k,
 \qquad \pi_k:Y_k^{\mathrm{GG}}\longrightarrow C.
\]
The scheme $Y_k^{\mathrm{GG}}$ is the geometric quotient of the
nonconstant based jets by the scalar action $t\mapsto\lambda t$;
we do not quotient by the full group of invertible changes of the jet
parameter.
The superscript $\mathrm{GG}$ refers to the Green--Griffiths construction;
see \cite[(0.3) and the discussion preceding (0.13)]{Dem11}. Locally,
\[
 Y_k^{\mathrm{GG}}|_{U_\alpha}
 \simeq U_\alpha\times
 \PP(1^{n+1},2^{n+1},\ldots,k^{n+1}),
\]
where the notation on the right denotes the weighted projective space
with $n+1$ homogeneous coordinates of each weight $1,\ldots,k$.
For \(k=1\), the based jets identify with \(E\), so
\(
 Y_1^{\mathrm{GG}}=\Pline(E).
\)
For higher orders the transition maps are generally nonlinear; the
direct sum of copies of $E$ will arise as a deformation below.

\subsection{The tautological intersection}
For the weighted jet formalism, see \cite{GG80} and
\cite[\S6]{Dem97}. Our goal is the formula of
Proposition~\ref{prop:harmonic}, whose harmonic factor is
$\sum_{q=1}^k1/q$. We first record the tautological polarization, then
deform the jet transitions to their linear parts, and finally split the
underlying vector bundle, so that coordinate hyperplanes become global.

\begin{lemma}[The Veronese polarization]\label{lem:veronese-polarization}
For every $k\geq1$, the scheme $Y_k^{\mathrm{GG}}$ is normal and projective, of dimension
$(n+1)k$. For $m>0$ sufficiently divisible,
$\OO_{Y_k^{\mathrm{GG}}}(m)$ is invertible and relatively very ample.
The rational tautological class
\[
 H_k=\frac{1}{m}c_1\bigl(\OO_{Y_k^{\mathrm{GG}}}(m)\bigr)
\]
is independent of the sufficiently divisible choice of $m$; we use it
as a rational Cartier class in intersection products. Its degree on a
fiber is
\begin{equation}\label{eq:fiber-degree}
 v_k=H_k^{(n+1)k-1}\cdot[\pi_k^{-1}(c)]
 =\frac{1}{(k!)^{n+1}},
\end{equation}
independently of $c\in C$.
\end{lemma}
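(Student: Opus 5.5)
The argument is local over $C$ and reduces to standard facts about weighted projective spaces. From \eqref{eq:based-jet-chart} and the grading, $\mathcal S_k$ is a finitely generated graded $\OO_C$-algebra with $(\mathcal S_k)_0=\OO_C$. Over each $U_\alpha$ it is a weighted polynomial ring $\OO_{U_\alpha}[x_{\alpha,i,q}]$ with $\deg x_{\alpha,i,q}=q$. Hence $Y_k^{\mathrm{GG}}|_{U_\alpha}\simeq U_\alpha\times\PP(1^{n+1},\ldots,k^{n+1})$. Weighted projective space is normal: it is the quotient of $\mathbb A^{(n+1)k}\setminus\{0\}$ by a $\mathbb G_m$-action, equivalently a toric variety. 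Its dimension is $(n+1)k-1$. Since $U_\alpha$ is a smooth curve, the product is normal of dimension $(n+1)k$. Normality is local, so $Y_k^{\mathrm{GG}}$ is normal.

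For the polarization, I take $m$ divisible by $\operatorname{lcm}(1,\ldots,k)$ (for instance $m$ a multiple of $k!$). Then on each chart $\OO(m)$ is invertible, since on the open set $x_{\alpha,i,q}\neq0$ it is generated by $x_{\alpha,i,q}^{m/q}$. Also the Veronese subalgebra $\bigoplus_j(\mathcal S_k)_{jm}$ is generated in degree one over $\OO_C$. This is the standard fact that for $m$ divisible enough (a multiple of $\operatorname{lcm}$ times $\sum$ weights suffices, and a divisible multiple of $k!$ works), $\Bbbk[x]^{(m)}$ is generated by its degree-$m$ part. It holds fiberwise and locally with uniform generators, because the local rings are polynomial rings over $\OO_{U_\alpha}$. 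Therefore $Y_k^{\mathrm{GG}}=\Proj_C\bigoplus_j(\mathcal S_k)_{jm}$ embeds into $\Pline\bigl((\mathcal S_k)_m^\vee\bigr)$ with $\OO(m)$ the pullback of $\OO(1)$. Here $(\mathcal S_k)_m$ is locally free, as noted before the lemma. So $\OO(m)$ is relatively very ample, and $Y_k^{\mathrm{GG}}$ is projective because $C$ is projective. Independence of $m$ follows from $\OO(mm')\simeq\OO(m)^{\otimes m'}$ for divisible $m,m'$. This canonical isomorphism $\mathcal S(m)\otimes\mathcal S(m')\to\mathcal S(m+m')$ is an isomorphism once both twists are invertible. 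Hence $\frac1{m}c_1(\OO(m))=\frac1{mm'}c_1(\OO(mm'))$.

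The fiber degree is computed on $\PP=\PP(1^{n+1},\ldots,k^{n+1})$ with $\mathcal O(m)$. Write $r=(n+1)k$. I use the Hilbert function: $h^0(\PP,\OO(jm))=\dim\Bbbk[x]_{jm}$. For large $j$ it grows like $(jm)^{r-1}/\bigl((r-1)!\prod w_i\bigr)$, the standard leading term of the Hilbert series $\prod(1-t^{w_i})^{-1}$, whose pole at $t=1$ has order $r$ and leading coefficient $1/\prod w_i$. Since $\OO(m)$ is very ample, the same dimension equals $\bigl(\OO(m)^{r-1}\bigr)j^{r-1}/(r-1)!+\cdots$ (vanishing of higher cohomology for large $j$ is not even needed). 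Comparing the two gives $\OO(m)^{r-1}=m^{r-1}/\prod w_i$, so $H_k^{r-1}\cdot[\pi_k^{-1}(c)]=1/\prod_{q}q^{n+1}=1/(k!)^{n+1}$. This is independent of $c$ because every fiber is this same weighted projective space. Alternatively, flatness of $\pi_k$ and constancy of intersection numbers in flat families give the same conclusion.

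I expect the main obstacle to be justifying uniform generation of the Veronese subalgebra in degree one, which is what makes $\OO(m)$ very ample rather than merely ample and invertible. I will handle it fiberwise by the classical result (e.g.\ that $\Bbbk[x]^{(m)}$ is generated in degree one when $m$ is a multiple of $k\cdot\operatorname{lcm}(1,\ldots,k)$ or similar). I will then extend it over $U_\alpha$ because the rings there are base changes of the $\Bbbk$-algebra, and the generating statement is preserved under base change.
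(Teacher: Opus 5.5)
Your proposal is correct. For normality, dimension, the Veronese embedding $Y_k^{\mathrm{GG}}\hookrightarrow\Pline((\mathcal S_k)_m^\vee)$, and independence of $m$, it follows the paper's argument. The paper simply cites the Stacks Project for degree-one generation of a sufficiently divisible Veronese, so your hedged explicit bound on $m$ is not needed.

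The genuine difference is the fiber-degree computation. The paper uses the finite morphism $\varphi_k:\PP^{(n+1)k-1}\to\PP(1^{n+1},\ldots,k^{n+1})$, $[u_{i,q}]\mapsto[u_{i,q}^{\,q}]$. It counts $(k!)^{n+1}$ preimages of a general point on a chart where a weight-one coordinate is normalized to $1$. It then notes $\varphi_k^*\OO(m)\simeq\OO(m)$ and obtains $(\deg\varphi_k)v_k=1$ from the projection formula. You instead compare $h^0(\OO(jm))=\dim\Bbbk[x]_{jm}$ with the Hilbert polynomial of the very ample $\OO(m)$.

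The paper's route is short and geometric. It also reuses the same map $\varphi_k$ in Lemma~\ref{lem:weighted-rescaling} to build affine representatives whose coefficients become $q$-th powers. Your route is purely algebraic and avoids any preimage count. Its cost is that it needs the leading term of the quasi-polynomial $N\mapsto\dim\Bbbk[x]_N$.

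That leading term equals $N^{r-1}/\bigl((r-1)!\prod w_i\bigr)$, even along multiples of $\operatorname{lcm}(w_i)$, only because the poles of $\prod(1-t^{w_i})^{-1}$ at roots of unity other than $1$ have order $<r$. Here this holds because there are $n+1$ coordinates of weight one. You should say so explicitly: for weights all equal to $2$, the formula would be off by a factor of $2$. The paper's degree count relies on the weight-one coordinates in exactly the same way.
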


\begin{proof}
Since $\mathcal S_k$ is locally a finitely generated weighted polynomial
algebra, a sufficiently divisible Veronese
$\mathcal S_k^{(m)}=\bigoplus_{\ell\geq0}(\mathcal S_k)_{\ell m}$ is generated
in degree one~\cite[Tag~0EGH]{Stacks}. The local algebras all have the
same generator weights, so one such $m$, divisible by every $q\leq k$,
serves for all of them; it also serves for the deformation families
below, whose local graded algebras have the same weights. The degree-one
part $(\mathcal S_k)_m$ is locally free, and multiplication gives
\[
 \operatorname{Sym}_{\OO_C}((\mathcal S_k)_m)\twoheadrightarrow\mathcal S_k^{(m)}.
\]
A Veronese does not change relative Proj. Hence
\[
 Y_k^{\mathrm{GG}}=\Proj_C\mathcal S_k^{(m)}\hookrightarrow
 \Proj_C\operatorname{Sym}_{\OO_C}((\mathcal S_k)_m)
 =\Pline((\mathcal S_k)_m^\vee).
\]
The dual reflects our convention of projectivizing lines. The
hyperplane bundle $\OO_{\Pline((\mathcal S_k)_m^\vee)}(1)$ restricts to
$\OO_{Y_k^{\mathrm{GG}}}(m)$, which is therefore invertible and
relatively very ample. Further divisible multiples give the same
rational class $H_k$.
By the local product description
\[
 Y_k^{\mathrm{GG}}|_U\simeq
 U\times\PP(1^{n+1},2^{n+1},\ldots,k^{n+1})
\]
and the standard properties of weighted projective space
\cite[Proposition~1.3.3(i)]{Dol82}, the scheme $Y_k^{\mathrm{GG}}$ is
integral and normal, with fiber dimension $(n+1)k-1$ and total dimension
$(n+1)k$. Since $C$ and the morphism $Y_k^{\mathrm{GG}}\to C$ are
projective, $Y_k^{\mathrm{GG}}$ is projective over $\Bbbk$.

It remains to compute the fiber degree. On a fiber over $c\in C$,
consider the finite morphism
\[
 \varphi_k:\PP^{(n+1)k-1}\longrightarrow
 \PP(1^{n+1},2^{n+1},\ldots,k^{n+1}),
 \qquad [u_{i,q}]\longmapsto[u_{i,q}^{\,q}].
\]
The graded ring map is integral, so $\varphi_k$ is finite. On a chart
where a weight-$1$ coordinate is normalized to $1$ in source and target,
the remaining coordinates map as $u_{i,q}\mapsto u_{i,q}^{\,q}$. A general
target point has $q$ choices for each weight-$q$ coordinate, independently;
hence $\deg\varphi_k=\prod_{q=1}^k q^{n+1}=(k!)^{n+1}$.
For $m$ divisible by every $q\leq k$,
\[
 \varphi_k^*\OO(m)\simeq\OO_{\PP^{(n+1)k-1}}(m),
\]
so $\varphi_k^*H_k$ is the ordinary hyperplane class. The projection
formula therefore gives
\[
 (\deg\varphi_k)v_k=1,
\]
which is \eqref{eq:fiber-degree}.
\end{proof}

To compute $H_k^{(n+1)k}$, we use the fact that the coordinate
hyperplanes have empty common intersection. These hyperplanes are only
locally defined: nonlinear jet-coordinate changes, and even their linear
parts, generally mix the coordinate functions. We therefore deform the
jet-coordinate transitions to their linear parts, and then deform $E$ to
a direct sum of line bundles; in the latter model the individual
coordinate hyperplanes become global divisors. Write $Y^{\mathrm{sp}}$
for the resulting split weighted projective bundle, with weights
$1,\ldots,k$. The next lemma shows that these two deformations preserve
the required intersection numbers.

\begin{lemma}[Reduction to a split weighted bundle]\label{lem:split-reduction}
There are line bundles $Q_1,\ldots,Q_{n+1}$ on $C$ with
$\sum_i\deg Q_i=d$ such that the following holds. Set
\[
 Y^{\mathrm{sp}}=\Proj_C\operatorname{Sym}_{\OO_C}
 \left(\bigoplus_{q=1}^k\bigoplus_{i=1}^{n+1}Q_i^\vee\right),
\]
where the generators in the $q$-th block have degree $q$, and let
$\pi_{\mathrm{sp}}:Y^{\mathrm{sp}}\to C$ be the structure morphism.
For $m$ sufficiently divisible, put
\[
 H^{\mathrm{sp}}=\frac{1}{m}c_1\bigl(\OO_{Y^{\mathrm{sp}}}(m)\bigr),
\]
using the twisting sheaf of this graded Proj. Then
\[
 (H^{\mathrm{sp}})^{(n+1)k}=H_k^{(n+1)k},
 \qquad
 (H^{\mathrm{sp}})^{(n+1)k-1}\cdot[\pi_{\mathrm{sp}}^{-1}(c)]=v_k
 \quad(c\in C).
\]
\end{lemma}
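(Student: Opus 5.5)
Our plan is to realize $Y_k^{\mathrm{GG}}$ and $Y^{\mathrm{sp}}$ as special and general fibers of flat families of weighted projective bundles over $C$, parametrized by $\mathbb A^1$ (two deformations in succession), and then use invariance of intersection numbers of a relatively ample line bundle in flat proper families.

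\emph{First deformation: linearize the jet transitions.} On overlaps $U_\alpha\cap U_\beta$, the jet-coordinate change sends the weight-$q$ block $x_{\beta,q}$ to $g_{\alpha\beta}x_{\alpha,q}+P_q(x_{\alpha,1},\ldots,x_{\alpha,q-1})$. Here $g_{\alpha\beta}$ is the transition of $E$ (the differential of the étale charts along $s$), and $P_q$ is weighted-homogeneous of weight $q$. We introduce a parameter $\epsilon$ acting by $x_{\alpha,q}\mapsto\epsilon^{q}x_{\alpha,q}$ on each block, twisted so that the modified transitions are $g_{\alpha\beta}x_{\alpha,q}+\epsilon P_q$ (equivalently, conjugate by the weight-$\epsilon$ scaling of the jet parameter inside the charts composed with a dilation of $E$). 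These transitions satisfy the cocycle condition for every $\epsilon$, because they come from conjugating the original cocycle by $\epsilon$-scalings when $\epsilon\ne0$, and the cocycle identity is polynomial in $\epsilon$. They also preserve the grading. This gives a graded sheaf of algebras $\mathcal S_{k,\epsilon}$ on $C\times\mathbb A^1$, locally a weighted polynomial ring with the same weights. For $\epsilon\ne0$ its fiber is isomorphic to $\mathcal S_k$; at $\epsilon=0$ it is $\operatorname{Sym}\bigl(\bigoplus_q E^\vee\bigr)$ with block $q$ in degree $q$. Taking $\Proj$ and the same sufficiently divisible $m$ as in Lemma~\ref{lem:veronese-polarization}, we get a flat projective family $\mathcal Y\to C\times\mathbb A^1$ with relatively very ample $\OO(m)$. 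I expect the main obstacle to be verifying that this rescaling is honestly a cocycle, with a well-defined limit, and that the polynomial dependence gives flatness. Flatness itself follows because each graded piece is locally free of rank independent of $\epsilon$.

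\emph{Second deformation: split $E$.} Over an algebraically closed field, $E$ has a filtration by subbundles with line bundle quotients $Q_1,\ldots,Q_{n+1}$ (successive maximal-degree sub-line bundles). Then $\sum\deg Q_i=\deg E=d$ by \eqref{eq:cone-degree}. The standard Rees-type degeneration, which scales the extension classes by $\epsilon$, gives a vector bundle $\mathcal E$ on $C\times\mathbb A^1$ with $\mathcal E_\epsilon\simeq E$ for $\epsilon\neq0$ and $\mathcal E_0\simeq\bigoplus Q_i$. Applying $\operatorname{Sym}(\bigoplus_q\mathcal E^\vee)$ with block weights again gives a flat family whose special fiber is $Y^{\mathrm{sp}}$.

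\emph{Conclusion.} In each family $\mathcal Y\to\mathbb A^1$, the fibers are projective of dimension $(n+1)k$, and $\OO(m)$ restricts to the tautological sheaf of each fiber. The self-intersection $\chi$-polynomial, and hence the top intersection number of $\OO(m)$ on the fibers, is locally constant in flat projective families over the connected base $\mathbb A^1$. Concretely, the leading coefficient of $\chi(\mathcal Y_\epsilon,\OO(mr))$ in $r$ is constant. Dividing by $m^{(n+1)k}$ gives $(H^{\mathrm{sp}})^{(n+1)k}=H_k^{(n+1)k}$. The fiber degree equality follows from the same argument applied to the fiber over $c$. Alternatively, each fiber of $\pi_{\mathrm{sp}}$ is the same weighted projective space $\PP(1^{n+1},\ldots,k^{n+1})$ with $\OO(m)$, so the computation of Lemma~\ref{lem:veronese-polarization} applies verbatim and gives $v_k$.
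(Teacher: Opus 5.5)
Your overall route matches the paper's. You degenerate the jet transitions to their linear parts, then Rees-degenerate a filtration of $E$ with line quotients to $\bigoplus_i Q_i$. You conclude from the constancy of $\chi\bigl(\mathcal Y_\epsilon,\OO(m)^{\otimes\ell}\bigr)$ in flat projective families over $\mathbb A^1$, and read the fiber degree off the unchanged weighted projective fibers. The second deformation and the conclusion are fine as written.

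The first deformation, however, is stated incorrectly.

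\emph{The weighted scaling does nothing.} Conjugating by $x_{\alpha,q}\mapsto\epsilon^q x_{\alpha,q}$ leaves the transitions unchanged, because \eqref{eq:jet-transition} commutes with the reparametrization $t\mapsto\epsilon t$. The whole effect comes from the uniform dilation $G^{(\epsilon)}(x)=\epsilon^{-1}G(\epsilon x)$. This multiplies a monomial of ordinary degree $e\geq2$ by $\epsilon^{e-1}$, not by a single factor $\epsilon$.

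\emph{Your displayed transitions are not a cocycle.} In general $g_{\alpha\beta}x_{\alpha,q}+\epsilon P_q$ fails the cocycle identity, so it is not ``equivalent'' to the conjugation. For instance, take one coordinate and coordinate changes $\phi(z)=z+az^2+bz^3$ and $\psi(z)=z+cz^2+c'z^3$. The coefficient of $x_1^3$ in the order-$3$ transition of $\psi\circ\phi$ is $b+c'+2ac$. Composing your uniform-$\epsilon$ transitions instead gives $\epsilon(b+c')+2\epsilon^2ac$, not $\epsilon(b+c'+2ac)$.

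\emph{The fix.} Use the factors $\epsilon^{e-1}$, and your own justification then applies verbatim. For $\epsilon\neq0$ the maps are conjugates of the original cocycle. The cocycle and inverse identities are polynomial in $\epsilon$, so they persist at $\epsilon=0$. Since $e\geq2$, the limit is still the linear part $x_{\beta,q}=g_{\alpha\beta}x_{\alpha,q}$. After this correction your argument coincides with the paper's.
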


\begin{proof}
\proofheading{Removing the nonlinear terms.}
Recall that $Y_k^{\mathrm{GG}}=\Proj_C\mathcal S_k$ is glued by the
graded transition maps of the relative based-jet coordinates $x_\alpha$.
Each relative étale coordinate map identifies the based section with
the zero section over $U_\alpha$. Formal étaleness therefore identifies
their formal neighborhoods, with completed structure sheaf
$\OO_{U_\alpha}[[z_{\alpha,1},\ldots,z_{\alpha,n+1}]]$.
On $U_\alpha\cap U_{\alpha'}$, composing these identifications gives
a formal coordinate change
\[
 z_{\alpha'}=\Phi_{\alpha\alpha'}(z_\alpha),
 \qquad \Phi_{\alpha\alpha'}(0)=0,
\]
with coefficients in $\OO_C(U_\alpha\cap U_{\alpha'})$. We truncate
this formal change to the finite order relevant modulo $t^{k+1}$; hence
only finitely many terms occur, all with coefficients regular on the base
overlap. Substituting
$z_\alpha(t)=\sum_{q=1}^k x_{\alpha,q}t^q$ therefore gives transition
maps that are polynomial and weighted:
\begin{equation}\label{eq:jet-transition}
 \begin{aligned}
 x_{\alpha'}&=G_{\alpha\alpha'}(x_\alpha),\\
 x_{\alpha',q}&=g_{\alpha\alpha'}x_{\alpha,q}
  +P_{\alpha\alpha',q}(x_{\alpha,1},\ldots,x_{\alpha,q-1}).
 \end{aligned}
\end{equation}
Here $g_{\alpha\alpha'}$ is the coordinate transition matrix of $E$:
column coordinates of a tangent vector satisfy
$v_{\alpha'}=g_{\alpha\alpha'}v_\alpha$. The same matrix occurs in every
jet order $q$. The polynomial
$P_{\alpha\alpha',q}$ has regular coefficients on the overlap, and
each of its monomials has weight $q$ and ordinary degree at least two.
Only finitely many Taylor terms can contribute modulo $t^{k+1}$, and
the inverse coordinate change has the same form.

The triangular form of \eqref{eq:jet-transition} underlies the standard
filtration of Green--Griffiths jet-differential bundles and its
associated graded decomposition into tensor products of symmetric
powers; see \cite[\S6, (6.5)]{Dem97} and
\cite[\S2.3, Theorem~2.3]{Mer15}. For our relative based-jet space we
realize the analogous passage by deforming these transitions to their
linear parts while keeping the jet weights fixed.
Introduce a parameter $\lambda$ and set
\[
 G_{\alpha\alpha'}^{(\lambda)}(x)
   =\lambda^{-1}G_{\alpha\alpha'}(\lambda x),
 \qquad x=(x_1,\ldots,x_k).
\]
Here $\lambda$ multiplies every coordinate by the same factor,
independently of jet weight, so a nonlinear monomial of ordinary degree
$e\geq2$ acquires the factor $\lambda^{e-1}$ while retaining its jet
weight. Since there is no constant term, both the transition
maps and their inverses are polynomial in $\lambda$, and at
$\lambda=0$ only the linear terms remain. Their cocycle identities hold
for invertible $\lambda$ and hence, as polynomial identities, also at
$\lambda=0$; the inverse identities extend for the same reason, so the
specialized maps at zero still define mutually inverse coordinate
changes. Every transition preserves the original jet grading.

Consequently these transitions glue the varieties
$(U_\alpha\times\mathbb A^1)\times
 \PP(1^{n+1},\ldots,k^{n+1})$
into a flat family $\mathcal Y\to C\times\mathbb A^1$ that is locally a
product over $C\times\mathbb A^1$. Let $\mathcal Y_\lambda$ denote
the fiber of the composite $\mathcal Y\to\mathbb A^1$ over $\lambda$.
Then $\mathcal Y_1=Y_k^{\mathrm{GG}}$, while
$\mathcal Y_0$ is the weighted projectivization of copies of $E$ in
weights $1,\ldots,k$, with transitions
$x_{\alpha',q}=g_{\alpha\alpha'}x_{\alpha,q}$.

With the common $m$ chosen in Lemma~\ref{lem:veronese-polarization},
the Veronese construction gives a line bundle $\OO_{\mathcal Y}(m)$
and a relative projective embedding of $\mathcal Y$ over
$C\times\mathbb A^1$. Hence $\mathcal Y$ is projective over
$\mathbb A^1$, and
\[
 \OO_{\mathcal Y}(m)|_{\mathcal Y_1}
 \simeq\OO_{Y_k^{\mathrm{GG}}}(m).
\]

Since $\mathcal Y\to\mathbb A^1$ is flat and projective and
$\OO_{\mathcal Y}(m)$ is invertible, every tensor power
$\OO_{\mathcal Y}(m)^{\otimes\ell}$ is flat over $\mathbb A^1$. Hence,
for every $\ell\geq0$,
\[
 \chi\!\left(\mathcal Y_\lambda,
 \bigl(\OO_{\mathcal Y}(m)|_{\mathcal Y_\lambda}\bigr)^{\otimes\ell}\right)
\]
is locally constant in $\lambda$~\cite[Tag~0B9T]{Stacks}, hence constant
since $\mathbb A^1$ is connected. Multiplying the coefficient of
$\ell^{(n+1)k}$ in this Euler-characteristic polynomial by
$((n+1)k)!$ gives
$c_1(\OO_{\mathcal Y}(m)|_{\mathcal Y_\lambda})^{(n+1)k}$~\cite[Theorem~1.1.24]{Laz04}.
Dividing by $m^{(n+1)k}$ shows that the rational tautological top
intersection is constant. No ampleness of
$\OO_{\mathcal Y}(m)|_{\mathcal Y_\lambda}$ is required.

\proofheading{Splitting the vector bundle.}
We start from $\mathcal Y_0$. To make the individual coordinate
hyperplanes glue globally, we deform the underlying bundle $E$ to a
sum of line bundles and apply this deformation in each weight block.
Choose a filtration by subbundles
\[
 0=E_0\subset E_1\subset\cdots\subset E_{n+1}=E
\]
with line quotients $Q_i=E_i/E_{i-1}$. Such a filtration is obtained
by saturating a rational line in $E$ and repeating in the quotient:
a torsion-free sheaf on a smooth curve is locally free.

After further refining the open cover, choose on each $U_\alpha$ a frame
$e_{\alpha,1},\ldots,e_{\alpha,n+1}$ adapted to the filtration, with
$E_i$ spanned by the first $i$ frame vectors. In these frames the
coordinate transition matrices $\widetilde g_{\alpha\alpha'}$, defined
by $v_{\alpha'}=\widetilde g_{\alpha\alpha'}v_\alpha$, are upper triangular.
Put
\[
 \Lambda(\lambda)
 =\operatorname{diag}(\lambda,\lambda^2,\ldots,\lambda^{n+1}).
\]
Since the matrices are upper triangular, an entry can be nonzero only
for $i_1\leq i_2$. The corresponding entry of
$\Lambda(\lambda)^{-1}\widetilde g_{\alpha\alpha'}\Lambda(\lambda)$
carries the factor $\lambda^{i_2-i_1}$, whose exponent is nonnegative.
The same holds for the inverse upper-triangular matrices. Hence both
matrices extend to $\lambda=0$, where they become diagonal, and the
cocycle and inverse identities persist. This gives a vector-bundle
deformation from $E$ to $\bigoplus_i Q_i$. Taking copies in the
prescribed weights and using the same Veronese degree $m$ gives a second
projective flat family, whose top intersection is the same.

The final fiber is $Y^{\mathrm{sp}}$, and the two deformations give the
asserted equality of top intersections. The filtration gives
$\sum_i\deg Q_i=\deg E=d$. Over each fixed point $c\in C$, both
deformations retain the same weighted projective space, with the same
divisible tautological polarization. Thus the fiber degree of
$H^{\mathrm{sp}}$ is $v_k$.
\end{proof}

The split model makes the reciprocal jet weights visible. We abbreviate
\begin{equation}\label{eq:jet-constants}
 h_k=\sum_{q=1}^k\frac1q.
\end{equation}

\begin{proposition}[The weighted intersection]\label{prop:harmonic}
For every $k\geq1$, the tautological class $H_k$ satisfies
\begin{equation}\label{eq:harmonic-intersection}
 H_k^{(n+1)k}=-v_k d h_k
 =-\frac{d}{(k!)^{n+1}}\sum_{q=1}^k\frac1q.
\end{equation}
\end{proposition}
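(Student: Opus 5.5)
By Lemma~\ref{lem:split-reduction}, it suffices to prove the identity on the split model $Y^{\mathrm{sp}}$: its top self-intersection equals $H_k^{(n+1)k}$, its fiber degree is $v_k$, and $\sum_i\deg Q_i=d$. So we work on $Y^{\mathrm{sp}}$ with $D=(n+1)k$. The plan is to write each coordinate hyperplane as a global effective Cartier divisor, observe that these hyperplanes have no common point, and expand the vanishing of their intersection product.

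First we identify the coordinate divisors. For each $q$ and $i$, the summand $Q_i^\vee$ sits in the degree-$q$ part of $\operatorname{Sym}_{\OO_C}\bigl(\bigoplus_{q,i}Q_i^\vee\bigr)$. This gives a map $\pi_{\mathrm{sp}}^*Q_i^\vee\to\OO_{Y^{\mathrm{sp}}}(q)$, that is, a global section $x_{i,q}$ of $\OO(q)\otimes\pi_{\mathrm{sp}}^*Q_i$. Since $\OO(q)$ need not be invertible, we fix $m$ divisible by every $q\le k$ as in Lemma~\ref{lem:veronese-polarization} and pass to the power $x_{i,q}^{m/q}$. This is a global section of the line bundle $\OO(m)\otimes\pi_{\mathrm{sp}}^*Q_i^{\otimes m/q}$. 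Its zero divisor $D_{i,q}$ is an effective Cartier divisor with rational class
\[
 \tfrac1m[D_{i,q}] = H^{\mathrm{sp}}+\tfrac1q\,\pi_{\mathrm{sp}}^*c_1(Q_i).
\]
Locally over $C$ this section is a coordinate of weight $q$ raised to the power $m/q$. In each fiber $\PP(1^{n+1},\ldots,k^{n+1})$ the coordinates have no common zero, so the $D$ divisors $D_{i,q}$ have empty common intersection.

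Next we show that the product $\prod_{i,q}\bigl(H^{\mathrm{sp}}+\tfrac1q\pi_{\mathrm{sp}}^*c_1(Q_i)\bigr)$ vanishes in degree $0$. We intersect the $D$ Cartier divisors successively on the projective variety $Y^{\mathrm{sp}}$, using Fulton's refined intersection with pseudo-divisors. The refined product is a class supported on $\bigcap D_{i,q}=\emptyset$, so it is zero, and hence its degree, which equals the product of the first Chern classes, is zero. Verifying that this formalism applies to line bundles with sections on a normal, possibly singular variety is the main point requiring care; I expect it to go through directly. Alternatively, one can pull back along the finite map $\varphi_k$ fiberwise, but the direct refined-intersection argument seems cleaner.

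Finally we expand the product. Pullbacks from the curve satisfy $\pi_{\mathrm{sp}}^*\alpha\cdot\pi_{\mathrm{sp}}^*\beta=0$, so only the terms with at most one base factor survive:
\[
 0=(H^{\mathrm{sp}})^D+\sum_{q=1}^k\sum_{i=1}^{n+1}\tfrac1q\,(H^{\mathrm{sp}})^{D-1}\cdot\pi_{\mathrm{sp}}^*c_1(Q_i).
\]
The class $\pi_{\mathrm{sp}}^*c_1(Q_i)$ is numerically $\deg Q_i$ times a fiber. By Lemma~\ref{lem:split-reduction}, $(H^{\mathrm{sp}})^{D-1}$ has degree $v_k$ on a fiber, so each summand equals $\tfrac1q\,v_k\deg Q_i$. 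Summing over $i$ gives $\tfrac1q\,v_k d$, and summing over $q$ gives $v_k d h_k$. Therefore $(H^{\mathrm{sp}})^D=-v_kdh_k$. Combining this with Lemma~\ref{lem:split-reduction} and the value $v_k=1/(k!)^{n+1}$ from Lemma~\ref{lem:veronese-polarization} yields \eqref{eq:harmonic-intersection}.
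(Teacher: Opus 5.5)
Your proposal is correct and follows essentially the same route as the paper: the same global sections $x_{i,q}^{m/q}$ of $\OO(m)\otimes\pi_{\mathrm{sp}}^*Q_i^{\otimes m/q}$ with no common zero, the same resulting vanishing of $\prod_{i,q}\bigl(H^{\mathrm{sp}}+\tfrac1q\pi_{\mathrm{sp}}^*c_1(Q_i)\bigr)$, and the same expansion using the fiber degree $v_k$ and $\sum_i\deg Q_i=d$. The only difference is packaging: the paper obtains the vanishing as $c_{(n+1)k}(\mathcal V)=0$ for the direct sum $\mathcal V$ of these line bundles, which carries a nowhere-zero section, whereas you use iterated pseudo-divisor intersections; these work on any scheme of finite type, so the singularities of $Y^{\mathrm{sp}}$ cause no trouble, and they give the same conclusion.
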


\begin{proof}
Apply Lemma~\ref{lem:split-reduction}, choosing $m$ sufficiently
divisible and divisible by every $q\leq k$. It remains to compute the
top intersection of $H^{\mathrm{sp}}$.
Choose local frames $e_i$ of the $Q_i$, and write $x_{i,q}$ for the fiber
coordinate in the $Q_i$ direction of the weight-$q$ block. We raise
$x_{i,q}$ to the $m/q$-th power, so that its weight becomes $m$, for which
$\OO_{Y^{\mathrm{sp}}}(m)$ is invertible; this does not change the
coordinate zero set.
If $e_i'=h_i e_i$, then $x_{i,q}'=h_i^{-1}x_{i,q}$. Thus the local
expressions $x_{i,q}^{m/q}\otimes\pi_{\mathrm{sp}}^*e_i^{\otimes m/q}$
glue to a global section
\[
 \sigma_{i,q}\in H^0\!\left(Y^{\mathrm{sp}},
 \OO_{Y^{\mathrm{sp}}}(m)\otimes \pi_{\mathrm{sp}}^*Q_i^{\otimes m/q}\right).
\]
These sections form a nowhere-zero section of the rank-$(n+1)k$
bundle
\[
 \mathcal V=\bigoplus_{i=1}^{n+1}\bigoplus_{q=1}^k
 \left(\OO_{Y^{\mathrm{sp}}}(m)\otimes
       \pi_{\mathrm{sp}}^*Q_i^{\otimes m/q}\right),
\]
since the homogeneous coordinates never vanish simultaneously.
Thus $\OO_{Y^{\mathrm{sp}}}\hookrightarrow\mathcal V$ has locally free
quotient of rank $(n+1)k-1$, and
$c_{(n+1)k}(\mathcal V)=0$~\cite[Tag~02UG]{Stacks}.
Applying this top Chern class to $[Y^{\mathrm{sp}}]$, taking its degree,
and dividing by $m^{(n+1)k}$ gives
\begin{equation}\label{eq:weighted-relation}
 \int_{Y^{\mathrm{sp}}}\prod_{i=1}^{n+1}\prod_{q=1}^k
 \left(H^{\mathrm{sp}}+\frac1q \pi_{\mathrm{sp}}^*c_1(Q_i)\right)=0.
\end{equation}
Products of two divisor classes pulled back from $C$ vanish. Expanding
\eqref{eq:weighted-relation} therefore gives
\[
 (H^{\mathrm{sp}})^{(n+1)k}
 =-v_k\sum_{i=1}^{n+1}\sum_{q=1}^k\frac{\deg Q_i}{q}
 =-v_k d h_k,
\]
where we used $\sum_i\deg Q_i=d$. The equality of top intersections
in Lemma~\ref{lem:split-reduction} proves \eqref{eq:harmonic-intersection}.
\end{proof}

The same harmonic factor appears in Demailly's curvature
average~\cite[(2.18)]{Dem11}; that analytic formula is not an input to
the intersection calculation above.

For $k=1$, Proposition~\ref{prop:harmonic} reads
$H_1^{n+1}=-\deg E$, since $v_1=h_1=1$.
The negative sign agrees with our convention of projectivizing lines.

\subsection{A negative horizontal curve}
The negative top intersection rules out nefness. The same argument,
applied to a pullback perturbation, gives a horizontal curve whose
negative degree is controlled relative to its degree over $C$.
Equivalently, the inverse tautological $\mathbb Q$-line-bundle class has
positive degree on the normalization of that curve.

\begin{lemma}[A negative horizontal curve]\label{lem:negative-horizontal}
Fix $k\geq1$ and put
\[
 \theta=\frac{d h_k}{2(n+1)k}.
\]
There exist an integral horizontal curve
$\Gamma\subset Y_k^{\mathrm{GG}}$, a smooth connected projective curve
$\widetilde C_0$, and a morphism
\[
 \tau_0:\widetilde C_0\longrightarrow
 \Gamma\subset Y_k^{\mathrm{GG}}
\]
which is the normalization of $\Gamma$ followed by the inclusion. If
$\rho_0=\pi_k\circ\tau_0$, then $\rho_0:\widetilde C_0\to C$ is finite
and surjective, and
\begin{equation}\label{eq:negative-intersection}
 H_k\cdot\Gamma+\theta\deg\rho_0<0,
 \qquad\text{equivalently}\qquad
 -\frac{H_k\cdot\Gamma}{\deg\rho_0}>\theta.
\end{equation}
\end{lemma}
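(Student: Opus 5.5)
We set $D=(n+1)k$, fix a closed point $p\in C$, and consider the rational Cartier class
\[
 M:=H_k+\theta\,\pi_k^*[p]
\]
on the normal projective variety $Y_k^{\mathrm{GG}}$ of Lemma~\ref{lem:veronese-polarization}. Since $(\pi_k^*[p])^2=0$, the binomial expansion has only two terms. Using Proposition~\ref{prop:harmonic} and \eqref{eq:fiber-degree} we get
\[
 M^D=H_k^D+D\theta\,H_k^{D-1}\cdot\pi_k^*[p]
 =-v_kdh_k+D\theta v_k=-\tfrac12 v_kdh_k<0.
\]
Here we used $H_k^{D-1}\cdot\pi_k^*[p]=H_k^{D-1}\cdot[\pi_k^{-1}(p)]=v_k$.

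Next we argue that $M$ is not nef. A nef $\mathbb Q$-Cartier class on a projective variety has nonnegative top self-intersection; by Kleiman's theorem it is a limit of ample classes, and the self-intersection is a limit of positive numbers. Since $M^D<0$, there is an integral curve $\Gamma\subset Y_k^{\mathrm{GG}}$ with $M\cdot\Gamma<0$. We then show that $\Gamma$ cannot be vertical. If $\pi_k(\Gamma)$ were a point $c$, then $\pi_k^*[p]\cdot\Gamma=0$, since we may move $p$ to a linearly equivalent divisor on $C$ avoiding $c$, or simply note that the pullback restricts trivially to the fiber. We would also have $H_k\cdot\Gamma>0$, because $\OO_{Y_k^{\mathrm{GG}}}(m)$ is relatively very ample, hence ample on the fiber. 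Together these give $M\cdot\Gamma>0$, a contradiction. So $\pi_k|_\Gamma$ is nonconstant, hence finite and surjective onto $C$ because $C$ is an irreducible curve.

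Let $\tau_0:\widetilde C_0\to\Gamma\subset Y_k^{\mathrm{GG}}$ be the normalization followed by the inclusion. Then $\widetilde C_0$ is a smooth connected projective curve, since $\Gamma$ is integral and projective. The composite $\rho_0=\pi_k\circ\tau_0$ is a nonconstant morphism of smooth projective curves, so it is finite and surjective. By the projection formula,
\[
 \pi_k^*[p]\cdot\Gamma=\deg\rho_0^*[p]=\deg\rho_0,
\]
because the normalization is birational onto $\Gamma$, so $\deg(\Gamma\to C)=\deg\rho_0$. Therefore
\[
 M\cdot\Gamma=H_k\cdot\Gamma+\theta\deg\rho_0<0,
\]
and dividing by $\deg\rho_0>0$ gives \eqref{eq:negative-intersection}.

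No step is a real obstacle. The points needing care are two: justifying nonnegativity of the top self-intersection of a nef $\mathbb Q$-Cartier class on a possibly singular normal projective variety, which follows from Kleiman, and checking that $H_k$ is positive on vertical curves, which follows from the relative very ampleness established in Lemma~\ref{lem:veronese-polarization}.
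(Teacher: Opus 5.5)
Your proposal is correct and follows essentially the same route as the paper. Both arguments perturb $H_k$ by $\theta\,\pi_k^*[p]$ and use Proposition~\ref{prop:harmonic} with the fiber degree $v_k$ to obtain top self-intersection $-\tfrac12 v_k d h_k<0$. Both then invoke Kleiman-type nonnegativity for nef classes to find an integral curve of negative degree, rule out vertical curves by relative ampleness of $H_k$, and conclude with the projection formula on the normalization.
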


\begin{proof}
The factor $1/2$ in $\theta$ is chosen for convenience.
Choose a closed point $p_0\in C$, and set
$H'_k=H_k+\theta\pi_k^*c_1(\OO_C(p_0))$.
Proposition~\ref{prop:harmonic} gives
\[
 (H'_k)^{(n+1)k}
 =v_k(-dh_k+(n+1)k\theta)
 =-\tfrac12v_kdh_k<0.
\]
A nef rational Cartier class on a projective variety has nonnegative
top self-intersection~\cite[Theorem~1.4.9]{Laz04}; no smoothness is
needed. Thus $H'_k$ is not nef, so $H'_k\cdot\Gamma<0$ for some integral
curve $\Gamma$. This curve is horizontal: on a vertical curve the class
pulled back from $C$ has degree zero, whereas the relatively ample class
$H_k$ has positive degree.

Let $\tau_0:\widetilde C_0\to\Gamma\subset Y_k^{\mathrm{GG}}$ be the
normalization followed by the inclusion, and put
$\rho_0=\pi_k\circ\tau_0$. Since $\Gamma$ is horizontal, $\rho_0$ is
finite and surjective. Since the closed point $p_0$ has degree one,
\[
 \pi_k^*c_1(\OO_C(p_0))\cdot\Gamma
 =\deg\rho_0^*\OO_C(p_0)=\deg\rho_0.
\]
Hence
$H'_k\cdot\Gamma=H_k\cdot\Gamma+\theta\deg\rho_0<0$, which is
\eqref{eq:negative-intersection}.
\end{proof}

\section{Realizing the inverse tautological class}\label{sec:parameter-line}

The fibers of $Y_k^{\mathrm{GG}}$ are weighted projective spaces, and they
may have cyclic quotient
singularities~\cite[Proposition~1.3.3(ii)]{Dol82}; the preceding
intersection argument needs only the rational Cartier class $H_k$ and
does not require these fibers to be smooth. The construction below,
however, works over smooth curves.

Let $\Gamma$, $\tau_0:\widetilde C_0\to Y_k^{\mathrm{GG}}$, and
$\rho_0:\widetilde C_0\to C$ be as in
Lemma~\ref{lem:negative-horizontal}; here $\widetilde C_0$ is the
smooth normalization of $\Gamma$. Our aim is to realize the pullback of
the inverse tautological $\mathbb Q$-line-bundle class by an ordinary
line bundle after finite base change, retaining the line-bundle class
rather than only its degree.
For $m$ sufficiently divisible, put
\begin{equation}\label{eq:rational-tautological}
 \mathcal L_0=\frac1m
 \bigl[\tau_0^*\OO_{Y_k^{\mathrm{GG}}}(-m)\bigr]
 \in\operatorname{Pic}(\widetilde C_0)\otimes_{\mathbb Z}\mathbb Q.
\end{equation}
If $m$ is sufficiently divisible, then for every integer $\ell>0$ one has
$\OO(-\ell m)\simeq\OO(-m)^{\otimes\ell}$, so comparison at a common
multiple makes this class independent of $m$.
The sheaf $\OO_{Y_k^{\mathrm{GG}}}(-1)$ itself need not be invertible;
for $k=1$ it is the usual tautological line on $\Pline(E)$. Note that
these objects belong to the jet space, whereas $\OO_X(-1)$ belongs to the
fixed embedding of $X$.

Since $\tau_0$ is the normalization of $\Gamma$ followed by inclusion,
\[
 \deg\tau_0^*\OO_{Y_k^{\mathrm{GG}}}(m)
 =c_1\bigl(\OO_{Y_k^{\mathrm{GG}}}(m)\bigr)\cdot\Gamma
 =mH_k\cdot\Gamma.
\]
Thus Lemma~\ref{lem:negative-horizontal} gives
\begin{equation}\label{eq:rational-slope}
 \frac{\deg\mathcal L_0}{\deg\rho_0}
 =-\frac{H_k\cdot\Gamma}{\deg\rho_0}
 >\frac{d h_k}{2(n+1)k}>0.
\end{equation}

By Lemma~\ref{lem:weighted-rescaling} there is an affine representative
whose weighted orders are integral after a finite base change, hence a
line bundle carrying the jet; Proposition~\ref{prop:positive} identifies
its rational class with the pullback of $\mathcal L_0$.

We now describe the source of the finite jet. Let $L$ be a line bundle
on a smooth connected projective curve $\widetilde C$, and let
$\widetilde C_{(k)}(L)$ be the $k$-th infinitesimal neighborhood of the
zero section in $\Tot(L)$:
\[
 \widetilde C_{(k)}(L)
 =\Spec_{\widetilde C}\left(\bigoplus_{q=0}^k L^{-q}\right),
\]
where products of total degree greater than $k$ vanish. In a local frame
$\varepsilon$ of $L$, write a vector $v\in L_y$ as $v=t\varepsilon(y)$.
This $t$ is the fiber coordinate on $\Tot(L)$, and the subscheme
$\widetilde C_{(k)}(L)$ is locally cut out by $t^{k+1}=0$.
Write $\pi_L:\Tot(L)\to\widetilde C$ for the projection, and write the
same symbol for its restriction to $\widetilde C_{(k)}(L)$.

\begin{lemma}[An affine lift after finite base change]\label{lem:weighted-rescaling}
Let $\Gamma$, $\tau_0:\widetilde C_0\to Y_k^{\mathrm{GG}}$, and
$\rho_0$ be as in Lemma~\ref{lem:negative-horizontal}. There is a
finite surjective morphism
$\rho_1:\widetilde C\to\widetilde C_0$ from a smooth connected
projective curve. Set
\[
 \rho=\rho_0\circ\rho_1:\widetilde C\to C,
 \qquad
 \tau=\tau_0\circ\rho_1:\widetilde C\to Y_k^{\mathrm{GG}}.
\]
Then there exist an ordinary line bundle $L$ on $\widetilde C$ and a
morphism over $C$
\[
 \jmath:\widetilde C_{(k)}(L)\longrightarrow\mathcal Z^\times.
\]
The morphism $\jmath$ restricts to $s\circ\rho$ on the zero section, and
its positive-order coefficient tuple is nowhere zero. Equivalently,
for every closed point $y\in\widetilde C$, the restriction of $\jmath$
to the fiber of $\widetilde C_{(k)}(L)\to\widetilde C$ over $y$ is
nonconstant in the jet parameter. Weighted projectivization of these
coefficients gives back the morphism
$\tau:\widetilde C\to Y_k^{\mathrm{GG}}$.
\end{lemma}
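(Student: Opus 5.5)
The plan is to realize the point $\tau$ of the weighted projective bundle as an actual jet by extracting roots of local transition functions. First, over each chart $U_\alpha$, the morphism $\tau_0$ is given on $\tau_0^{-1}(U_\alpha)$ by weighted homogeneous coordinates $[x_{\alpha,i,q}]$. After shrinking to an open cover $\{W_\beta\}$ of $\widetilde C_0$, I choose local representatives $x_{\beta,i,q}$, regular functions not all vanishing. The simplest route is to fix $m$ divisible by every $q\le k$ and use the line bundle $M_0=\tau_0^*\OO_{Y_k^{\mathrm{GG}}}(m)$. Each weight-$m$ monomial $x_{i,q}^{m/q}$ gives a section of $M_0$, and these sections have no common zero. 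Two representatives of the same weighted point differ by $x'_{q}=u^{q}x_{q}$ with $u$ a unit. So the local data define a \v{C}ech cocycle whose $m$-th power is the transition cocycle of $M_0$, but whose own components are determined only up to $m$-th roots of unity.

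Second, I pass to a finite cover to make this an honest line bundle. I take a cyclic (Kummer) cover $\rho_1:\widetilde C\to\widetilde C_0$ on which $\rho_1^*M_0^{-1}\simeq L^{\otimes m}$ for a line bundle $L$. Concretely, I choose an auxiliary very ample divisor $D$ with $M_0^{-1}(D)$ having degree divisible by $m$, then extract an $m$-th root of a section of $\OO(mD')$, and normalize. Since char is zero, the cover can be taken smooth and connected, for instance by taking a general member of a linear system with reduced divisor and then normalizing. On $\widetilde C$ the ambiguity by $\mu_m$ in the unit $u$ is then resolved by choosing local frames $\varepsilon_\beta$ of $L$ whose $m$-th powers match the pulled-back coordinate frames. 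With $t$ the fiber coordinate, $v=t\varepsilon_\beta$, I set local jets $z_\beta(t)=\sum_q x_{\beta,q}t^q$, rescaled so that changing frame $\varepsilon'=u\varepsilon$ (so $t'=u^{-1}t$) is compensated exactly by $x'_q=u^qx_q$. This makes the coefficient of order $q$ a section of $L^{-q}$ twisted appropriately.

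Third, I glue these to a morphism. In each étale chart, the jet $s(c)+\sum x_{\alpha,q}t^q$ in coordinates $z_\alpha$ defines a morphism $\widetilde C_{(k)}(L)|_{W_\beta}\to V_\alpha\subset\mathcal Z^\times$ by formal étaleness, centered at $s\circ\rho$. On overlaps of base charts $U_\alpha\cap U_{\alpha'}$, the weighted transition maps \eqref{eq:jet-transition} identify the two descriptions, because the jet-coordinate changes are exactly the induced maps on based $k$-jets. On overlaps of frames, the weighted rescaling identifies them. Hence the morphisms agree and glue to $\jmath$. It restricts to $s\circ\rho$ at $t=0$, and the coefficient tuple is nowhere zero because the representatives were chosen not all vanishing. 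Its weighted projectivization is $\tau$ by construction.

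The main obstacle is the second step: ensuring the $\mu_m$ ambiguity disappears globally, meaning that after a single finite cover, local $m$-th roots of the transition units form a genuine cocycle compatible with the gluing. I would first try to argue abstractly. The obstruction lies in $H^1(\widetilde C_0,\mu_m)$ together with the divisibility of $\deg M_0$, and both are killed by pulling back along a suitable cyclic cover followed by an étale $m$-torsion cover.
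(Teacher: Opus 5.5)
Your proposal has a genuine gap in the first step, and the cyclic cover in the second step does not close it.

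\textbf{Local lifts need not exist.} You assume that, after shrinking, $\tau_0$ admits local regular representatives $x_{\beta,i,q}$ on $\widetilde C_0$, not all vanishing. This is false in general. The fibers $\PP(1^{n+1},\ldots,k^{n+1})$ have cyclic quotient singularities, and nothing in Lemma~\ref{lem:negative-horizontal} keeps $\Gamma$ away from them.

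For example, take $n+1=2$ and $k=2$, and look near the point whose only nonzero coordinate is $x_{1,2}$. The chart is $\mathbb A^2/\{\pm1\}\times\mathbb A^1$, with invariants $x_{1,1}^2$, $x_{1,1}x_{2,1}$, $x_{2,1}^2$ and $x_{2,2}$. Consider a curve germ along which $x_{1,1}^2/x_{1,2}$ pulls back to a uniformizer $z$ and the other invariants pull back to $0$. Any lift must have $x_{1,2}$ a unit, so it would need $x_{1,1}^2=z\cdot(\text{unit})$, which is impossible by parity of orders. In your language, a lift requires the sections $\tau_0^*x_{\alpha,i,q}^{m/q}$ of $M_0$ to be $(m/q)$-th powers, and this fails whenever their vanishing orders are not divisible by $m/q$.

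\textbf{The Kummer cover does not repair this.} An $m$-th root of a general section of $\OO(mD')$ is branched at general points. It only makes $\rho_1^*M_0^{-1}$ an $m$-th power. It does not make the coordinate sections $(m/q)$-th powers at the specific points where $\Gamma$ meets the singular locus. Moreover, the correct $m$-th root $L$ is dictated by the coordinates: a root differing by torsion need not carry the jet. So the obstruction is not just $H^1(\widetilde C_0,\mu_m)$ together with $\deg M_0$ modulo $m$. Your third step, the gluing, would be fine once local lifts exist.

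\textbf{The missing idea: extract roots of the coordinates themselves, not of the line bundle.} The paper does this as follows.
\begin{enumerate}
\item It fixes one affine representative of the generic point of $\tau_0$ over a finite extension $K$ of $\Bbbk(\widetilde C_0)$, using the cover $[u_{i,q}]\mapsto[u_{i,q}^{\,q}]$.
\item It transports this representative to every chart by the transition maps.
\item It adjoins $q$-th roots of all nonzero weight-$q$ coordinates $b_{\alpha,i,q}$ and normalizes $\widetilde C_0$ in the resulting field.
\item The weighted orders $w_y=\min_{i,q}\ord_y(b_{\alpha,i,q})/q$ are then integers, independent of the chart, and one sets $L=\OO_{\widetilde C}(\sum_y w_y[y])$.
\item Writing the rational section $1$ of $L$ as $\gamma\varepsilon$ in a local frame, the rescaled coefficients $\gamma^{-q}b_{\alpha,i,q}$ are regular and never vanish simultaneously.
\end{enumerate}
Since every local lift comes from one global rational representative, the root-of-unity ambiguity you flag as the main obstacle never arises. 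The line bundle $L$ is produced together with the jet, rather than as an abstract $m$-th root of $M_0^{-1}$.
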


\begin{proof}
\proofheading{A generic affine representative.}
We first lift $\tau_0$ over the generic point to an affine jet, and make
the weighted orders of its coefficients integral after a finite base
change.

Fix finitely many jet charts $U_\alpha$ covering $C$, and choose one,
say $U_{\alpha_0}$, containing the image of the generic point of
$\widetilde C_0$. We begin with an affine representative in this single
generic chart. Consider the finite surjective morphism from
Lemma~\ref{lem:veronese-polarization},
\[
 \PP^{(n+1)k-1}\longrightarrow
 \PP(1^{n+1},\ldots,k^{n+1}),
 \qquad [u_{i,q}]\longmapsto[u_{i,q}^{\,q}]
\]
as in~\cite[Section~1.2.2]{Dol82}. Choose a closed point in its fiber
over the generic weighted-projective point defined by $\tau_0$.
Its residue field $K$ is finite over $\Bbbk(\widetilde C_0)$. Choose a
homogeneous coordinate tuple $(u_{i,q})$ for this point. Then $(u_{i,q}^{\,q})_{i,q}$
is an affine representative in the $U_{\alpha_0}$ jet coordinates.

For every other chart, define the tuple $b_\alpha=(b_{\alpha,i,q})$ by
transporting this representative with the jet-coordinate transition
maps \eqref{eq:jet-transition}. The cocycle identities make these tuples
compatible, and their coefficients lie in the same finite extension
$K$; the regular transition coefficients are viewed in $K$ through
$\Bbbk(C)\hookrightarrow\Bbbk(\widetilde C_0)\hookrightarrow K$.
After transporting the representative to all the finitely many charts,
adjoin a $q$-th root of every nonzero coefficient $b_{\alpha,i,q}$.
Only finitely many roots are needed, so this is a further finite
extension. Continue to denote the resulting field by $K$, and
let $\rho_1:\widetilde C\to\widetilde C_0$ be the normalization in
$K$. It is finite and surjective. In characteristic zero a normal curve
is smooth, so $\widetilde C$ is smooth; it is also connected and
projective. With $\rho$ and $\tau$ as in the statement, the curve maps
fit into
\[
\begin{tikzcd}[column sep=large,row sep=large]
 \widetilde C
   \arrow[r,"\rho_1"]
   \arrow[rr,bend left=18,"\tau"]
   \arrow[dr,"\rho"'] &
 \widetilde C_0
   \arrow[r,"\tau_0"]
   \arrow[d,"\rho_0"] &
 Y_k^{\mathrm{GG}}
   \arrow[dl,"\pi_k"] \\
 & C .
\end{tikzcd}
\]
Each nonzero $b_{\alpha,i,q}$ is a $q$-th power in
$\Bbbk(\widetilde C)$, so
\[
 \frac{\ord_y(b_{\alpha,i,q})}{q}\in\mathbb Z
\]
for every nonzero coefficient and every $y\in\widetilde C$.

\proofheading{The parameter line and the based jet.}
For $y\in\widetilde C$, choose a jet chart whose base open contains
$\rho(y)$, and put
\begin{equation}\label{eq:weighted-order}
 w_y(\alpha)=\min_{i,q}\frac{\ord_y(b_{\alpha,i,q})}{q}\in\mathbb Z,
 \qquad \ord_y(0)=+\infty.
\end{equation}
For a second chart at $\rho(y)$, each weight-$q$ transition monomial
has order at least $qw_y(\alpha)$, since its coefficient is regular
there. Hence $w_y(\alpha')\geq w_y(\alpha)$, and the inverse transition
gives the reverse inequality. Write $w_y$ for this common value.
The tuple is nonzero in every chart. There are finitely many charts, and
each nonzero rational coefficient has finitely many zeros and poles, so
only finitely many $w_y$ are nonzero. Put
\[
 D=\sum_y w_y[y],
 \qquad L=\OO_{\widetilde C}(D).
\]
Regard $L$ as a subsheaf of the constant sheaf
$\Bbbk(\widetilde C)$. The constant rational function $1$ then
defines a rational section $s_L$ of $L$ whose divisor is $D$:
locally at $y$, if $z$ is a uniformizer, the frame
$\varepsilon_y=z^{-w_y}$ gives $1=z^{w_y}\varepsilon_y$.
Let $V\subset\widetilde C$ be an open set with
$\rho(V)\subset U_\alpha$ on which $L$ has a frame $\varepsilon$,
and let $t$ be the associated fiber coordinate. Write
\[
 s_L=\gamma\varepsilon
\]
on $V$. For every $y\in V$ we then have
$\ord_y(\gamma)=w_y$. Relative to this frame, define
\[
 a_{\alpha,i,q}=\gamma^{-q}b_{\alpha,i,q},
 \qquad
 a_\alpha=(a_{\alpha,i,q})_{i,q}.
\]
By the definition of $w_y$,
\[
 \ord_y(a_{\alpha,i,q})
 =\ord_y(b_{\alpha,i,q})-qw_y\geq0,
\]
and equality holds for at least one coefficient attaining the minimum
in \eqref{eq:weighted-order}. Hence all coefficients $a_{\alpha,i,q}$
are regular on $V$, and at every $y\in V$ at least one satisfies
$a_{\alpha,i,q}(y)\ne0$. Thus the rescaling gives a nonconstant based
jet at every point.

Together with the prescribed center $s\circ\rho$, the relative
based-jet identification \eqref{eq:based-jet-chart} therefore yields a
unique local morphism over $C$
\[
 \jmath_{\alpha,\varepsilon}:
 \widetilde C_{(k)}(L)|_V\longrightarrow\mathcal Z^\times
\]
such that, in the centered étale coordinates,
\[
 z_{\alpha,i}\circ\jmath_{\alpha,\varepsilon}
   =\sum_{q=1}^k a_{\alpha,i,q}t^q
 \quad\text{in }\OO_V[t]/(t^{k+1}),
 \qquad 1\leq i\leq n+1.
\]
Its restriction to the zero section is $s\circ\rho$.
Equivalently, relative to the frame $\varepsilon$, the functions
$a_{\alpha,i,q}$ are precisely the values of the jet coordinates
$x_{\alpha,i,q}$ introduced above.

If $\varepsilon'=u\varepsilon$, then
$a'_{\alpha,i,q}=u^qa_{\alpha,i,q}$ and $t'=u^{-1}t$, so the displayed
coordinate expressions are unchanged. On a jet-chart overlap, weighted
homogeneity gives
$a_{\alpha'}=G_{\alpha\alpha'}(a_\alpha)$; by
\eqref{eq:jet-transition} the corresponding local maps therefore agree
on the overlap, and they glue to a morphism
\[
 \jmath:\widetilde C_{(k)}(L)\longrightarrow\mathcal Z^\times
\]
over $C$. On the dense open set where $s_L$ is regular and nowhere
vanishing we may use $s_L$ as a frame, so that $\gamma=1$ and
$a_\alpha=b_\alpha$; weighted projectivization of the coefficient tuples
therefore agrees with $\tau$ there. Two morphisms from the integral curve
$\widetilde C$ that agree on a dense open set agree everywhere, by
separatedness of $Y_k^{\mathrm{GG}}$.
\end{proof}

\begin{proposition}[Realization of the inverse tautological class]
\label{prop:positive}
For the data of Lemmas~\ref{lem:negative-horizontal} and
\ref{lem:weighted-rescaling}, let $m>0$ be divisible by every $q\leq k$
and such that $\mathcal S_k^{(m)}$ is generated in degree one. Then
\begin{equation}\label{eq:tautological-identification}
 \tau^*\OO_{Y_k^{\mathrm{GG}}}(m)\simeq L^{-m},
\end{equation}
so $[L]=\rho_1^*\mathcal L_0$ in
$\operatorname{Pic}(\widetilde C)\otimes_{\mathbb Z}\mathbb Q$. In particular,
\begin{equation}\label{eq:positive-slope}
 \frac{\deg L}{\deg\rho}
 =-\frac{H_k\cdot\Gamma}{\deg\rho_0}
 >\frac{d h_k}{2(n+1)k}>0.
\end{equation}
\end{proposition}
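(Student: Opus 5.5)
The isomorphism \eqref{eq:tautological-identification} will be proved by writing down a nowhere-vanishing section of $\tau^*\OO_{Y_k^{\mathrm{GG}}}(m)\otimes L^{m}$, built locally from the weighted coefficients $a_{\alpha,i,q}$ of Lemma~\ref{lem:weighted-rescaling}. Once that isomorphism is in hand, the remaining assertions follow by bookkeeping with degrees.

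Fix a jet chart $U_\alpha$ and an open $V\subset\widetilde C$ with $\rho(V)\subset U_\alpha$ on which $L$ has a frame $\varepsilon$. Over $U_\alpha$, the degree-one part $(\mathcal S_k)_m$ of the Veronese algebra is freely generated by the weight-$m$ monomials $M$ in the jet coordinates $x_{\alpha,i,q}$. Each such $M$ is a local section of $\OO_{Y_k^{\mathrm{GG}}}(m)$, and these sections have no common zero, because the pure powers $x_{\alpha,i,q}^{m/q}$ already have none. The pullback $\tau^*M$ evaluates the monomial on the tuple $a_\alpha$. More precisely, $\tau$ is given on $V$ by the homogeneous coordinates $a_\alpha$, so $\tau^*\OO(m)|_V$ is trivialized by any monomial that is nonzero at a given point. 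I would define a local section by
\[
 \sigma_{\alpha,\varepsilon}=\tau^*M\otimes M(a_\alpha)^{-1}\otimes\varepsilon^{\otimes m},
\]
or, more invariantly, as the section whose value pairs $\tau^*M$ with $M(a_\alpha)\,\varepsilon^{-m}\in L^{-m}$. The key claim is that the assignment $M\mapsto M(a_\alpha)\varepsilon^{-m}$ gives a well-defined surjection $\tau^*(\mathcal S_k)_m\to L^{-m}$, and this surjection is exactly the quotient defining $\tau^*\OO(m)$. It is independent of choices for two reasons:
\begin{itemize}
 \item Changing the frame, $\varepsilon'=u\varepsilon$, sends $a_{\alpha,i,q}$ to $u^qa_{\alpha,i,q}$, so a weight-$m$ monomial scales by $u^m$, which cancels $\varepsilon'^{-m}=u^{-m}\varepsilon^{-m}$.
 \item Changing the chart is handled by the weighted-homogeneous transitions \eqref{eq:jet-transition} together with $a_{\alpha'}=G_{\alpha\alpha'}(a_\alpha)$ from Lemma~\ref{lem:weighted-rescaling}. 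These transitions express the monomials of one chart through those of the other compatibly with evaluation.
\end{itemize}
Surjectivity at every point $y$ comes from the final assertion of Lemma~\ref{lem:weighted-rescaling}: some $a_{\alpha,i,q}(y)\neq0$, hence $x_{\alpha,i,q}^{m/q}$ evaluates to a unit. A surjection of $\tau^*(\mathcal S_k)_m$ onto a line bundle determines a morphism to $\Pline((\mathcal S_k)_m^\vee)$, and this morphism is the composite of $\tau$ with the Veronese embedding of Lemma~\ref{lem:veronese-polarization}, because both are given by evaluating the coefficient tuple. Pulling back $\OO(1)$ then gives $\tau^*\OO_{Y_k^{\mathrm{GG}}}(m)\simeq L^{-m}$; the sign matches the convention of projectivizing lines.

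The rational-class identity follows directly. Since $\tau=\tau_0\circ\rho_1$, definition \eqref{eq:rational-tautological} gives $\rho_1^*\mathcal L_0=\frac1m[\tau^*\OO(-m)]=\frac1m[L^{m}]=[L]$. For the degrees, $\deg L=\deg\rho_1\cdot\deg\mathcal L_0$ and $\deg\rho=\deg\rho_1\deg\rho_0$, so $\deg L/\deg\rho=\deg\mathcal L_0/\deg\rho_0$. Inequality \eqref{eq:rational-slope} then yields \eqref{eq:positive-slope}.

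The main obstacle is verifying that the identification of $\tau^*\OO(m)$ through evaluation of monomials is honest and intrinsic, not merely chart-wise. One has to check that the graded pieces of $\mathcal S_k$ pull back along $\tau$ in the way just described, even though the transitions are nonlinear. I would argue this at the level of graded algebras: the affine lift $\jmath$ defines a graded $\OO_{\widetilde C}$-algebra map $\rho^*\mathcal S_k\to\bigoplus_{\ell}L^{-\ell}$, sending $x_{\alpha,i,q}$ to $a_{\alpha,i,q}\varepsilon^{-q}$. This map is well defined because it is the coordinate description of $\jmath$ under the $\mathbb G_m$-equivariant chart \eqref{eq:based-jet-chart}. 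It is surjective in degrees divisible by $m$ by the nonvanishing above. Taking $\Proj$ and using the standard fact that a graded surjection onto $\operatorname{Sym}$ of a line bundle, restricted to the Veronese, induces the map to $\Proj$ together with the pullback of $\OO(m)$, completes the argument.
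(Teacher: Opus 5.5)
Your proposal is correct and follows essentially the same route as the paper: the evaluation map $\rho^*(\mathcal S_k)_m\to L^{-m}$, $R\mapsto R(a_\alpha)\varepsilon^{-m}$, is checked to be frame- and chart-independent, is surjective via the pure powers $x_{\alpha,i,q}^{m/q}$, and is identified through the Proj universal property with $\tau$ followed by the Veronese embedding. The rational-class identity and the degree statement then follow by the same bookkeeping as in the paper, and your closing graded-algebra map $\rho^*\mathcal S_k\to\bigoplus_\ell L^{-\ell}$ only repackages that evaluation.
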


\begin{proof}
\proofheading{Tautological evaluation.}
Evaluating the weight-$m$ functions on the affine jet $\jmath$ gives a
morphism
\[
 \rho^*(\mathcal S_k)_m\longrightarrow L^{-m}.
\]
On an open set $V$ with $\rho(V)\subset U_\alpha$, with a frame
$\varepsilon$ of $L|_V$ and with $a_\alpha$ the corresponding
coefficient tuple as in Lemma~\ref{lem:weighted-rescaling}, it is given
by
\[
 \rho^*R\longmapsto R(a_\alpha)\varepsilon^{-m},
 \qquad R\in(\mathcal S_k)_m(U_\alpha).
\]
If $\varepsilon'=u\varepsilon$, then
$a'_{\alpha,i,q}=u^qa_{\alpha,i,q}$, and weighted homogeneity gives
\[
 R(a'_\alpha)(\varepsilon')^{-m}
 =R(a_\alpha)\varepsilon^{-m}.
\]
By \eqref{eq:jet-transition} the evaluation is also compatible with a
change of jet chart, so the evaluation map is globally defined.

It is surjective. At every point some $a_{\alpha,i,q}$ is a unit, and,
since $q\mid m$, the section
$x_{\alpha,i,q}^{m/q}\in(\mathcal S_k)_m$ evaluates to a local generator of
$L^{-m}$. Since $\mathcal S_k^{(m)}$ is generated in degree one, the
Veronese construction of Lemma~\ref{lem:veronese-polarization} realizes
$Y_k^{\mathrm{GG}}$ inside $\Pline((\mathcal S_k)_m^\vee)$, with tautological
quotient
\[
 \pi_k^*(\mathcal S_k)_m\twoheadrightarrow
 \OO_{Y_k^{\mathrm{GG}}}(m).
\]
By the Proj universal property~\cite[Tag~01NJ]{Stacks}, the evaluation
quotient defines a morphism
$\widetilde C\to\Pline((\mathcal S_k)_m^\vee)$. Its local coordinates
are $R(a_\alpha)$, so it is the composite of $\tau$ with the Veronese
embedding. Pulling back the universal quotient therefore identifies
\[
 \tau^*\OO_{Y_k^{\mathrm{GG}}}(m)\simeq L^{-m}.
\]
For $k=1$, the coefficient subline $L\hookrightarrow\rho^*E$ identifies
$L$ with $\tau^*\OO_{\Pline(E)}(-1)$; taking the $m$-th tensor power
of the dual identification gives \eqref{eq:tautological-identification}.
Passing to the rational Picard group gives
\[
 [L]=\frac1m[\tau^*\OO_{Y_k^{\mathrm{GG}}}(-m)]
 =\rho_1^*\mathcal L_0
 \quad\text{in }
 \operatorname{Pic}(\widetilde C)\otimes_{\mathbb Z}\mathbb Q.
\]

\proofheading{The degree.}
Finite pullback multiplies degrees by $\deg\rho_1$, so the class
identification gives
\[
 \deg L=(\deg\rho_1)\deg\mathcal L_0,
 \qquad
 \deg\rho=(\deg\rho_1)(\deg\rho_0).
\]
The ratio is unchanged, so \eqref{eq:rational-slope} proves
\eqref{eq:positive-slope}.
\end{proof}

\section{Polynomial realization and the ruled surface}
\label{sec:realization}

The positive inverse tautological class is now represented, after
finite pullback, by the ordinary line bundle $L$ carrying the jet.
The positivity of $L$ forces the high-order ambient coefficients to
vanish, and the surviving coefficients give the polynomial realization.

Fix $\rho$, $L$, and $\jmath$ as in
Lemma~\ref{lem:weighted-rescaling}. By
Proposition~\ref{prop:positive}, $\deg L>0$. For this choice of data at
jet order $k$, put
\[
 r_k=\left\lfloor\frac{a\deg\rho}{\deg L}\right\rfloor.
\]
The subscript records the jet order; the value also depends on the
chosen $\rho$ and $L$. It bounds the orders of the surviving ambient
coefficients and need not be the actual polynomial degree.
A tautological section $\xi$ of $\pi_L^*L$ on $\Tot(L)$ assigns to a
point $(y,v)$, with $v\in L_y$, the vector $v$ itself.
In a local frame $\varepsilon$ of $L$, write $v=t\varepsilon(y)$; then
$\xi=t\varepsilon$.
The same symbol denotes its restriction to $\widetilde C_{(k)}(L)$.
The pullbacks by $\pi_L$ are written explicitly in
\eqref{eq:finite-coefficients} and omitted in the later polynomial
expressions. Through the cone embedding, write
$\rho^*s=(\rho^*f_0,\ldots,\rho^*f_N)$.
On an open $V\subset\widetilde C$ with $\rho(V)\subset U_\alpha$ and a
frame $\varepsilon$ of $L$, the composite of $\jmath$ with the cone
embedding has the ambient expansion
\[
 \rho^*s+\sum_{q=1}^k B_q^{(\varepsilon)}t^q
 \quad\text{modulo }t^{k+1},
 \qquad
 B_q^{(\varepsilon)}\in
 \Gamma\bigl(V,\rho^*A^{\oplus(N+1)}\bigr).
\]
These are the ambient coefficients of \eqref{eq:ambient-jet}, and their
relation to the centered étale coefficients $a_{\alpha,i,q}$ is
generally nonlinear. If
$\varepsilon'=u\varepsilon$, then
$B_q^{(\varepsilon')}=u^qB_q^{(\varepsilon)}$; consequently the local
expressions $B_q^{(\varepsilon)}\otimes\varepsilon^{-q}$ glue to
$\mathbf B_q=(B_{0,q},\ldots,B_{N,q})$, whose components are global
sections of $\rho^*A\otimes L^{-q}$.

A nonconstant jet in the cone might move only in the scalar direction
and thus project to a constant jet in $X$. The next lemma combines the
nonconstancy ensured by Lemma~\ref{lem:weighted-rescaling} with
$\deg L>0$ from Proposition~\ref{prop:positive} to exclude this
possibility; it also establishes the ambient coefficient vanishing.

\begin{lemma}[Coefficient vanishing and nonconstant projection]\label{lem:realization-coefficients}
Let $\rho$, $L$, and $\jmath$ be as in
Lemma~\ref{lem:weighted-rescaling}, so that the positive-order coefficient
tuple of $\jmath$ is nowhere zero, and $\deg L>0$ by
Proposition~\ref{prop:positive}. With $r_k$ as above, composing
$\jmath$ with the cone embedding in
$\Tot(A^{\oplus(N+1)})$ gives $N+1$ coordinate sections
$P_\ell^{(k)}$ of $\pi_L^*\rho^*A$ on $\widetilde C_{(k)}(L)$.
Their expansions are
\begin{equation}\label{eq:finite-coefficients}
 \begin{aligned}
 P_\ell^{(k)}&=\pi_L^*(\rho^*f_\ell)
  +\sum_{q=1}^k\pi_L^*B_{\ell,q}\,\xi^q,\\
  B_{\ell,q}&\in
 H^0\bigl(\widetilde C,\rho^*A\otimes L^{-q}\bigr),
 \end{aligned}
\end{equation}
for $0\leq\ell\leq N$, where the factors $L^{-q}$ and $L^q$ pair
in each product $B_{\ell,q}\xi^q$. Then:
\begin{enumerate}
\item[(i)] $B_{\ell,q}=0$ for every $0\leq\ell\leq N$ and
 $r_k<q\leq k$.
\item[(ii)] The composite
 $\widetilde C_{(k)}(L)\xrightarrow{\jmath}\mathcal Z^\times\to X$
 is nonconstant in the jet parameter over the generic point of
 $\widetilde C$.
\end{enumerate}
In particular, $r_k\geq1$.
\end{lemma}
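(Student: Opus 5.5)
The plan is to establish the expansion \eqref{eq:finite-coefficients} first, then prove (i) by a degree count and (ii) by contradiction using $\deg L>0$.

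\emph{The expansion.} Compose $\jmath$ with the closed embedding $\mathcal Z^\times\subset\Tot(A^{\oplus(N+1)})$. This gives $N+1$ sections of $\pi_L^*\rho^*A$ over the infinitesimal neighborhood $\widetilde C_{(k)}(L)$. Because $\pi_{L*}\OO_{\widetilde C_{(k)}(L)}=\bigoplus_{q=0}^kL^{-q}$, each such section decomposes uniquely into graded pieces. The degree-zero piece is $\rho^*f_\ell$, since $\jmath$ restricts to $s\circ\rho$ on the zero section. The degree-$q$ piece is the glued section $B_{\ell,q}$ of $\rho^*A\otimes L^{-q}$ constructed just before the lemma; locally it is $B_q^{(\varepsilon)}\otimes\varepsilon^{-q}$, and it pairs with $\xi^q=t^q\varepsilon^q$.

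\emph{Proof of (i).} The line bundle $\rho^*A\otimes L^{-q}$ has degree $a\deg\rho-q\deg L$. When $q>r_k$, we have $q>a\deg\rho/\deg L$ by the definition of the floor, so this degree is negative. A line bundle of negative degree on the smooth connected projective curve $\widetilde C$ has no nonzero global sections, hence $B_{\ell,q}=0$ for all $\ell$.

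\emph{Proof of (ii).} Suppose the projection to $X$ were constant in the jet parameter over the generic point. Then over a dense open set the truncated ambient tuple $P^{(k)}$ would be proportional to $\rho^*s$. Concretely, $P^{(k)}=\phi\cdot\rho^*s$ for a unit $\phi=1+\sum_{q\ge1}\phi_q\xi^q$ in $\bigoplus_q L^{-q}$ (after localization). One obtains $\phi_q$ by comparing with a coordinate $f_\ell$ that does not vanish. This gives $B_{\ell,q}=\phi_q\,\rho^*f_\ell$ for all $\ell$. Since the $f_\ell$ have no common zero, $\phi_q=B_{\ell,q}/\rho^*f_\ell$ has no poles: near any point choose an $\ell$ with $f_\ell$ nonvanishing there. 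So each $\phi_q$ extends to a global section of $L^{-q}$. It vanishes, because $\deg L^{-q}<0$ for $q\ge1$. Hence every $\mathbf B_q=0$.

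This contradicts the nonconstancy of $\jmath$. Since $\mathcal Z^\times\to\Tot(A^{\oplus(N+1)})$ is a closed embedding, the centered étale coefficients are determined by the ambient ones. If all $\mathbf B_q$ vanished, the jet would be the constant jet at $s$, whereas by Lemma~\ref{lem:weighted-rescaling} the positive-order coefficient tuple is nowhere zero.

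\emph{Main obstacle.} The step I expect to require the most care is making ``constant projection'' precise scheme-theoretically on the truncated neighborhood. The cleanest route is the isomorphism \eqref{eq:punctured-line-bundle}. A map from $\widetilde C_{(k)}(L)$ into $\mathcal Z^\times\simeq\Tot(\mathcal M)^\times$ whose $X$-component factors through the zero-section projection is given by a unit multiple of $\rho^*s$, with the unit in $\bigoplus L^{-q}$. This yields $\phi$ as above.

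\emph{The bound $r_k\ge1$.} If $r_k=0$, then (i) kills every $B_{\ell,q}$ with $1\le q\le k$. The jet would then be constant, contradicting (ii). Therefore $r_k\ge1$.
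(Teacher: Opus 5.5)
Your proposal is correct and follows the paper's proof essentially step for step. Part (i) comes from the negative degree $a\deg\rho-q\deg L$ of $\rho^*A\otimes L^{-q}$; part (ii) shows that constant projection forces $\mathbf B_q=(\rho^*s)\otimes\phi_q$, with $\phi_q$ gluing (because the $\rho^*f_\ell$ have no common zero) to a global section of $L^{-q}$ that vanishes since $\deg L>0$, contradicting the nowhere-zero coefficient tuple; your unit $\phi$ just packages the paper's $2\times2$ minor relations $(\rho^*f_i)B_{j,q}-(\rho^*f_j)B_{i,q}=0$. One small correction: $\mathcal Z^\times$ is only locally closed in $\Tot(A^{\oplus(N+1)})$, not closed, but an immersion is still a monomorphism, which is all your final step needs.
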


\begin{proof}
\proofheading{Vanishing of higher coefficients.}
Since $\jmath$ restricts to $s\circ\rho$ on the zero section, the
decomposition
$\pi_{L*}\OO_{\widetilde C_{(k)}(L)}=\bigoplus_{q=0}^kL^{-q}$ gives
\eqref{eq:finite-coefficients}, with each term taking values in
$\pi_L^*\rho^*A$.

The coefficient bundle $\rho^*A\otimes L^{-q}$ has degree
$a\deg\rho-q\deg L$. For $q>r_k$ this degree is negative, so
$B_{\ell,q}=0$.

\proofheading{Nonconstant projection.}
Let $\eta=\Spec K$ be the generic point of $\widetilde C$, where
$K=\Bbbk(\widetilde C)$. Suppose that the projected jet is the composite
\[
 \Spec K[t]/(t^{k+1})\longrightarrow\eta
 \xrightarrow{(f\circ\rho)|_\eta}X,
\]
where the first arrow is the canonical projection.
Then its homogeneous coordinate
tuple would be proportional to $\rho^*s$ modulo $t^{k+1}$. In a local
frame of $L$, comparing coefficients of the coordinate minors gives
\[
 (\rho^*f_i)B_{j,q}-(\rho^*f_j)B_{i,q}=0
 \qquad(0\leq i,j\leq N,\ 1\leq q\leq k)
\]
at the generic point, hence everywhere on $\widetilde C$. Thus
$\mathbf B_q$ is generically proportional to $\rho^*s$. On the open
set where $\rho^*f_i$ is nowhere vanishing,
the ratio $B_{i,q}/\rho^*f_i$ is a regular section of $L^{-q}$.
Generic proportionality gives
\[
 B_{j,q}=(\rho^*f_j)\frac{B_{i,q}}{\rho^*f_i}
 \qquad(0\leq j\leq N)
\]
throughout that open set, since both sides are regular.
The coordinate sections $\rho^*f_i$ have no common zero, so these opens
cover $\widetilde C$; the ratios agree on overlaps and glue to
\[
 \eta_q\in H^0(\widetilde C,L^{-q}),
 \qquad \mathbf B_q=(\rho^*s)\otimes\eta_q.
\]
Thus each purely scalar coefficient comes from $H^0(\widetilde C,L^{-q})$,
which is zero because $\deg L>0$. The jet $\jmath$ would then be the
constant jet based at $s\circ\rho$, contradicting the nowhere-zero
positive-order coefficient tuple of
Lemma~\ref{lem:weighted-rescaling}. The projected jet is therefore
nonconstant at the generic point. Some positive-order coefficient
survives, and the vanishing just proved gives $r_k\geq1$.
\end{proof}

\begin{theorem}[Polynomial realization]\label{thm:realization}
Assume that the fixed jet order $k$ satisfies
\begin{equation}\label{eq:choose-order}
 d h_k>2(n+1)\delta a.
\end{equation}
For the data above, one has
\[
 1\leq r_k<\frac{2(n+1)ka}{d h_k},
 \qquad \delta r_k<k,
\]
and there is a morphism
$\widehat\jmath:\Tot(L)\to\mathcal Z$ over the map
$\rho\circ\pi_L:\Tot(L)\to C$, polynomial of
degree at most $r_k$ in the fiber coordinate, and extending $\jmath$
(viewed as a morphism to $\mathcal Z$). If
\[
 U=\widehat\jmath^{-1}(\mathcal Z^\times),
\]
then $U$ contains the zero section and projectivization gives a morphism
$G:U\to X$. Its restriction to the zero section is $f\circ\rho$, and
the induced rational map $\Tot(L)\dashrightarrow X$ is nonconstant on
a general fiber of $\pi_L$.
\end{theorem}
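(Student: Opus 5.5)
The plan has three parts: the numerical bounds, the construction of $\widehat\jmath$ as a polynomial map, and the verification that $F_j\circ\widehat\jmath$ vanishes identically.

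\textbf{Numerical bounds.} The lower bound $r_k\geq1$ is already part of Lemma~\ref{lem:realization-coefficients}. For the upper bound, Proposition~\ref{prop:positive} gives $\deg L/\deg\rho>dh_k/(2(n+1)k)$. Hence
\[
 r_k\leq\frac{a\deg\rho}{\deg L}<\frac{2(n+1)ka}{dh_k}.
\]
Multiplying by $\delta$ and using \eqref{eq:choose-order}, which reads $2(n+1)\delta a/(dh_k)<1$, gives $\delta r_k<k$.

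\textbf{The polynomial map.} By Lemma~\ref{lem:realization-coefficients}(i), only the coefficients $B_{\ell,q}$ with $q\leq r_k$ survive. I define sections of $\pi_L^*\rho^*A$ on all of $\Tot(L)$ by
\[
 \widehat P_\ell=\pi_L^*\rho^*f_\ell+\sum_{q=1}^{r_k}\pi_L^*B_{\ell,q}\,\xi^q .
\]
Each is a polynomial of degree at most $r_k$ in the fiber coordinate. Together they give a morphism $\Tot(L)\to\Tot(A^{\oplus(N+1)})$ over $\rho\circ\pi_L$, and restricting to $\widetilde C_{(k)}(L)$ recovers the composite of $\jmath$ with the cone embedding, since $r_k\leq k$.

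\textbf{Landing in $\mathcal Z$.} Substituting into $F_j$ gives a section $F_j(\widehat P)$ of $\pi_L^*\rho^*A^{\otimes d_j}$. Locally in a frame of $L$, this is a polynomial in $t$ of degree at most $d_jr_k\leq\delta r_k<k$. Because $\jmath$ factors through $\mathcal Z$, this polynomial vanishes modulo $t^{k+1}$. A polynomial of degree less than $k+1$ that vanishes modulo $t^{k+1}$ is zero, so $F_j(\widehat P)=0$ and $\widehat\jmath$ factors through $\mathcal Z$. This degree comparison is the key step. The only subtlety is that the comparison is frame-independent: coefficients transform homogeneously, so the vanishing of the polynomial is a local statement that glues.

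\textbf{Openness, projection, and nonconstancy.} The set $U$ is open, being the preimage of an open set. It contains the zero section, because there $\widehat\jmath=s\circ\rho$ lands in $\mathcal Z^\times$. The composite of $\widehat\jmath|_U$ with the projectivization $\mathcal Z^\times\to X$ (via $p$ and $\operatorname{pr}_X$) is $G$, and on the zero section it equals $f\circ\rho$ because $p\circ s=(\mathrm{id},f)$.

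For nonconstancy, the jet of $G$ along the zero section at the generic point is the projected jet of Lemma~\ref{lem:realization-coefficients}(ii), which is nonconstant. So $G$ is nonconstant on the generic fiber of $\pi_L$, meaning the generic-fiber restriction over $K=\Bbbk(\widetilde C)$, as a map from an open subset of $\mathbb A^1_K$, is nonconstant. The set of $y$ over which $G$ is constant on the fiber is closed: constancy on a fiber amounts to vanishing of the $2\times2$ minors comparing $\widehat P$ with $\rho^*s$ coefficientwise, as in the proof of that lemma. Since the generic point is not in this closed set, a general fiber is nonconstant.

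The main point needing care is this last step. I would pass from the generic fiber to general closed fibers using exactly those minors, as polynomials in $t$ with coefficients that are sections on $\widetilde C$. Some coefficient is a nonzero section, so it is nonzero away from finitely many points, and on those fibers the map is nonconstant.
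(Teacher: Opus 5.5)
Your proposal is correct and follows the paper's proof essentially step by step. It obtains $r_k\leq a\deg\rho/\deg L<2(n+1)ka/(dh_k)<k/\delta$ from the slope estimate, truncates the expansion at order $r_k$ via the coefficient-vanishing lemma, uses the degree comparison $d_jr_k\leq\delta r_k<k$ to force $F_j(\widehat P)=0$, and passes from generic to general nonconstancy through the coefficients of the minors $P_i(t)P_j(0)-P_j(t)P_i(0)$, exactly as the paper does.
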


\begin{proof}
We extend the given finite jet
$\jmath:\widetilde C_{(k)}(L)\to\mathcal Z^\times$ to a polynomial map
from $\Tot(L)$ to $\mathcal Z$, using its coefficient sections.
Lemma~\ref{lem:realization-coefficients} gives $r_k\geq1$ and
the vanishing of all coefficients of order greater than $r_k$.
The slope bound
\eqref{eq:positive-slope} and the choice \eqref{eq:choose-order} give
\begin{equation}\label{eq:equation-degree}
 r_k\leq\frac{a\deg\rho}{\deg L}
 <\frac{2(n+1)ka}{d h_k}<\frac{k}{\delta}.
\end{equation}
The surviving coefficients $B_{\ell,q}$ define polynomial coordinate
sections $P_\ell$ of $\pi_L^*\rho^*A$ on $\Tot(L)$:
\begin{equation}\label{eq:exact-polynomials}
 P_\ell=\rho^*f_\ell+\sum_{q=1}^{r_k}B_{\ell,q}\xi^q.
\end{equation}
Their restrictions to $\widetilde C_{(k)}(L)$ recover the coordinates
of $\jmath$. For each defining equation $F_j$, the section
$F_j(P_0,\ldots,P_N)$ takes values in
$\pi_L^*\rho^*A^{\otimes d_j}$ and has fiber degree at most
$d_jr_k\leq\delta r_k<k$.
In local frames of $L$ and $\rho^*A$ its reduction modulo $t^{k+1}$
is zero, because $\jmath$ takes values in $\mathcal Z$. But a polynomial
of degree at most $k$ whose reduction modulo $t^{k+1}$ is zero is
itself zero. Hence all coefficients vanish in every local
trivialization, and the section vanishes identically. All defining
equations of the cone hold, so the tuple
$(P_0,\ldots,P_N)$ gives the required extension
$\widehat\jmath:\Tot(L)\to\mathcal Z$.

On the zero section the tuple $(P_\ell)$ equals the nowhere-zero tuple
of coordinate sections $(\rho^*f_\ell)$. Hence
\[
 U=\widehat\jmath^{-1}(\mathcal Z^\times)
   =\Tot(L)\setminus V(P_0,\ldots,P_N)
\]
contains the entire zero section, and projectivization gives a morphism
$G=[P_0:\cdots:P_N]:U\to X$ restricting there to $f\circ\rho$.

Since $\widehat\jmath$ extends $\jmath$, a constant projection on the
generic fiber would make the projected generic jet constant, contrary to
Lemma~\ref{lem:realization-coefficients}. The projection is therefore
nonconstant on the generic fiber. Equivalently, some positive-order
coefficient of $P_i(t)P_j(0)-P_j(t)P_i(0)$, written in local frames, is
nonzero at the generic point. Such a coefficient remains nonzero on a
nonempty open subset of $\widetilde C$, so the projection is nonconstant
on a general fiber.
\end{proof}

For fixed $n,a,d$ and arbitrary choices of the auxiliary data at each
sufficiently large jet order, \eqref{eq:equation-degree} gives
$0<r_k/k<2(n+1)a/(d h_k)\to0$ as $k\to\infty$.
It is this uniform control that makes the truncated identities exact.

Nonconstancy has been proved on a general fiber only. On special fibers
the projection may be constant, and $\widehat\jmath$ may meet the cone
vertex outside $U$. After compactification and resolution,
Section~\ref{sec:completion} treats these fibers.

To pass from the polynomial map to complete rational curves, we
compactify its source fiberwise.
The subbundle $L\hookrightarrow\OO_{\widetilde C}\oplus L$ gives a section
of $\Pline(\OO_{\widetilde C}\oplus L)$, whose complement is identified
with $\Tot(L)$ by sending $v\in L_y$ to the line spanned by $(1,v)$.
Under this identification, the section defined by the other
summand $\OO_{\widetilde C}$ is the zero section of $\Tot(L)$.

\begin{corollary}[Ruled-surface realization]\label{cor:ruled-realization}
For the data of Theorem~\ref{thm:realization}, set
$W=\Pline(\OO_{\widetilde C}\oplus L)$, the projective compactification
of $\Tot(L)$. The induced rational map $W\dashrightarrow X$ can be
resolved by point blowups $\beta:S\to W$ whose centers lie away from the
zero section. The resulting smooth connected projective surface has a
morphism
$\pi_S:S\to\widetilde C$ with connected fibers, and its general fiber is
$F\simeq\PP^1$. The zero section lifts to a section
$\sigma:\widetilde C\to S$ of $\pi_S$, so
$\pi_S\circ\sigma=\mathrm{id}_{\widetilde C}$, and there is a morphism
$\Phi:S\to X$ such that $\Phi\circ\sigma=f\circ\rho$.
The reduced support of every
fiber of $\pi_S$ is a connected tree of smooth rational curves, and
\begin{equation}\label{eq:embedding-bound}
 1\leq\deg\bigl(\Phi^*\OO_X(1)|_F\bigr)\leq r_k
 <\frac{2(n+1)ka}{d h_k}.
\end{equation}
This degree is that of the parametrized map $\Phi|_F$, including the
degree onto its image.
\end{corollary}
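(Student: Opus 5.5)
The rational map $W\dashrightarrow X$ is given on $\Tot(L)$ by the tuple $[P_0:\cdots:P_N]$ of Theorem~\ref{thm:realization}. I would first write it near the infinity section. Let $\xi$ be the fiber coordinate on $\Tot(L)$ and $\zeta=\xi^{-1}$ the coordinate near infinity. Multiplying by $\zeta^{r_k}$, the map is given by sections
\[
 \widetilde P_\ell=\rho^*f_\ell\,\zeta^{r_k}+\sum_{q=1}^{r_k}B_{\ell,q}\,\zeta^{r_k-q}.
\]
These are sections of $\OO_W(r_k)$ twisted by a pullback from $\widetilde C$, namely $\pi^*\rho^*A\otimes\OO_{W}(r_k\,\sigma_\infty)$ in suitable notation. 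Hence the map is defined by a linear system $\mathcal P\subset|M|$ with $M\cdot F=r_k$.

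The base locus of $\mathcal P$ is the common zero set of the $P_\ell$ together with their infinity-side counterparts. On the zero section the $P_\ell$ restrict to the $\rho^*f_\ell$, which have no common zero, so the base locus is disjoint from the zero section. It is a closed subset of $W$. If it contained a curve, removing that fixed curve component first only lowers the fiber degree. I would take the mobile part $\mathcal P'$ with $M'\cdot F\le r_k$. This component cannot be the zero section or a horizontal curve meeting it, since the base locus misses the zero section. Its base locus is then finite and still misses the zero section.

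Resolve by successive point blowups at the base points, the standard elimination of indeterminacy for linear systems on smooth surfaces, e.g.\ Hartshorne V.5.3 or Beauville II.7. All centers lie over points away from the zero section, so the strict transform of the zero section is isomorphic to it and gives $\sigma$. Set $\Phi$ to be the resulting morphism. Then $\Phi\circ\sigma=f\circ\rho$, because on the zero section $\Phi$ equals $[\rho^*f_\ell]$.

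The surface $S$ is smooth, connected and projective, being an iterated point blowup of the ruled surface $W$. The map $\pi_S$ has connected fibers, and the fiber over a non-blown-up point is $\PP^1$. Each fiber is obtained from $\PP^1$ by point blowups, so its reduced support is a tree of smooth rational curves. This follows by induction: the exceptional curve meets the previous total transform in one point transversally, so the dual graph stays a tree.

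For the degree bound, a general fiber $F$ avoids all centers. Thus $\Phi^*\OO_X(1)|_F$ is the restriction of the mobile system, of degree at most $M'\cdot F\le r_k$. The bound $r_k<2(n+1)ka/(dh_k)$ is taken from Theorem~\ref{thm:realization}. The lower bound $\ge1$ holds because $\Phi|_F$ is nonconstant on a general fiber (Theorem~\ref{thm:realization}) and $\OO_X(1)$ is ample. This degree includes the degree onto the image.

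The main obstacle I expect is the bookkeeping that the base locus misses the zero section after fixed components are removed. Here I must check that a fixed curve cannot pass through the zero section, since $\rho^*f_\ell$ have no common zero, and that removing it really lowers $M\cdot F$ rather than raising it.
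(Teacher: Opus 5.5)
Your proposal is correct and follows essentially the paper's route. The paper cites general elimination of indeterminacy for the rational map from the regular surface $W$, with centers outside the domain $U\supset$ zero section. You carry out the classical linear-system version of the same resolution, which has the small advantage of giving the bound $M'\cdot F\le r_k$ directly.

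One minor correction to your tree induction: a later center can be a node of the fiber, for instance where an exceptional curve meets another fiber component. There the new exceptional curve meets two components and subdivides an edge, rather than meeting the previous fiber at a single point. The reduced fiber still has simple normal crossings and its dual graph remains a tree, as the paper notes.
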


\begin{proof}
Start with the morphism $G:U\to X$ from
Theorem~\ref{thm:realization}, where $U\subset\Tot(L)\subset W$
contains the entire zero section. It determines a rational map
$\Phi_0:W\dashrightarrow X$ that is regular on $U$. On the generic
ruling fiber, homogenizing \eqref{eq:exact-polynomials} and removing
any common factor gives a morphism of $\OO_X(1)$-degree between one
and $r_k$. On each general ruling fiber the same homogeneous
coordinates have degree at most $r_k$, and removing their common
factor can only decrease this degree.

Let $\pi_W:W\to\widetilde C$ be the ruling. Since $W$ is a regular
projective surface and $X$ is proper, elimination of indeterminacy
\cite[Tag~0C5H]{Stacks} resolves $\Phi_0$ by a sequence of blowups at
closed points that do not lie over the given open set $U$. In
particular, the composition of these blowups is an isomorphism over $U$,
so the zero section is untouched. Let $\beta:S\to W$ be the resulting
composition, write $\Phi:S\to X$ for
the resolved morphism, and put $\pi_S=\pi_W\circ\beta$. These maps are
summarized by
\[
\begin{tikzcd}[column sep=large,row sep=large]
 S
   \arrow[r,"\beta"]
   \arrow[rr,bend left=18,"\Phi"]
   \arrow[dr,"\pi_S"'] &
 W
   \arrow[r,dashed,"\Phi_0"]
   \arrow[d,"\pi_W"] &
 X \\
 & \widetilde C .
\end{tikzcd}
\]
The surface $S$ is smooth, connected, and projective. Since $\beta$
is an isomorphism over $U$, the zero
section lifts to a section $\sigma$ satisfying
\[
 \Phi\circ\sigma=f\circ\rho.
\]
The reduced support of each fiber is a simple normal crossings divisor
of smooth rational components with a tree as its dual graph. This holds
initially on $W$. Inductively, a blowup at a smooth point of a fiber
component adds a new exceptional $\PP^1$ as a leaf of the dual graph,
while a blowup at a node inserts the exceptional $\PP^1$ and subdivides
the corresponding edge. Thus the strict transforms and the exceptional
curves remain smooth rational curves, the crossings remain simple, and
the dual graph remains a tree. The scheme-theoretic fiber may have
component multiplicities.
A general fiber avoids the finitely many blowup centers, so it remains
$F\simeq\PP^1$, with the degree bound just proved. The estimate for
$r_k$ in Theorem~\ref{thm:realization} gives \eqref{eq:embedding-bound}.
\end{proof}

\section{Rational curves through every prescribed closed point}
\label{sec:completion}

We can already produce rational curves through prescribed points. Since
$\rho$ is surjective, every prescribed closed point of $f(C)$ occurs as
$(f\circ\rho)(y)$ for some $y\in\widetilde C$, so it is enough to treat a
closed point $y$ of $\widetilde C$. If $\Phi$ is nonconstant on the
component meeting the zero section, that component gives the desired
curve. Otherwise, positivity of the total fiber degree guarantees a
nonconstant component elsewhere; connectedness of the fiber support
then carries the prescribed value to the first such component. The lemma
below makes this argument precise.

Its proof uses the same specialization principle as the proof of
\cite[Lemma~7.8]{Deb}, where a bounded-degree rational curve
degenerates to rational components of no larger total degree.

\begin{lemma}[Prescribed-point specialization]\label{lem:prescribed-point}
In the setting of Corollary~\ref{cor:ruled-realization}, put
\[
 A_S=\Phi^*\OO_X(1),\qquad d_F=A_S\cdot F,
\]
where $F$ is a smooth general fiber. For every closed point $y\in\widetilde C$
there is a nonconstant morphism $b_y:\PP^1\to X$ such that
\[
 b_y(0)=(f\circ\rho)(y),\qquad
 \deg b_y^*\OO_X(1)\leq d_F\leq r_k.
\]
One may take for $b_y$ the restriction of $\Phi$ to a noncontracted
component of $\pi_S^*(y)$, composed with an identification of that
component with $\PP^1$.
\end{lemma}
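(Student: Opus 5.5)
The plan is to work with the scheme-theoretic fiber $\pi_S^*(y)=\sum_j m_j\Gamma_j$ for a fixed closed point $y\in\widetilde C$. Here $m_j\geq1$, and the reduced components $\Gamma_j\simeq\PP^1$ form a connected tree, by Corollary~\ref{cor:ruled-realization}. The first step is to compare degrees. All fibers of $\pi_S$ are algebraically equivalent to $F$, so
\[
 \sum_j m_j\,(A_S\cdot\Gamma_j)=A_S\cdot\pi_S^*(y)=A_S\cdot F=d_F\geq1 .
\]
The last inequality comes from \eqref{eq:embedding-bound}. Since $A_S$ is the pullback of a very ample bundle, it is nef on $S$, so every term satisfies $A_S\cdot\Gamma_j\geq0$. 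It follows that some component has $A_S\cdot\Gamma_j>0$, which means $\Phi|_{\Gamma_j}$ is nonconstant. It also follows that each single component satisfies $A_S\cdot\Gamma_j\leq d_F$, because $m_j\geq1$ and the other terms are nonnegative. Also, $A_S\cdot\Gamma_j$ equals $\deg(\Phi|_{\Gamma_j})^*\OO_X(1)$, the degree of the parametrized map.

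The second step brings in the prescribed point. The section $\sigma$ meets the fiber at the point $\sigma(y)$, which lies on some component $\Gamma_{j_0}$, and $\Phi(\sigma(y))=(f\circ\rho)(y)$ because $\Phi\circ\sigma=f\circ\rho$. The argument splits into two cases.

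\emph{Case 1: $\Phi|_{\Gamma_{j_0}}$ is nonconstant.} Identify $\Gamma_{j_0}$ with $\PP^1$ so that $\sigma(y)\mapsto0$. The resulting $b_y$ works, with degree at most $d_F\leq r_k$.

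\emph{Case 2: $\Phi$ contracts $\Gamma_{j_0}$.} Use that the dual graph is a finite connected tree containing at least one noncontracted vertex. Take a shortest path $\Gamma_{j_0}=\Gamma^{(0)},\Gamma^{(1)},\ldots,\Gamma^{(r)}$ in the tree, where $\Gamma^{(r)}$ is the first noncontracted component along the path. All of $\Gamma^{(0)},\ldots,\Gamma^{(r-1)}$ are contracted by $\Phi$, and consecutive components meet in points. Because their union is connected, $\Phi$ sends all of it to the single point $\Phi(\sigma(y))=(f\circ\rho)(y)$. So the intersection point $P=\Gamma^{(r-1)}\cap\Gamma^{(r)}$ satisfies $\Phi(P)=(f\circ\rho)(y)$. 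Identify $\Gamma^{(r)}$ with $\PP^1$ so that $P\mapsto 0$, and set $b_y=\Phi|_{\Gamma^{(r)}}$. The degree bound is the one from the first step.

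The only delicate point is the first step: the inequality $A_S\cdot\Gamma_j\leq d_F$ for each component, together with the existence of a noncontracted component. Both rely on nefness of $A_S$, which gives nonnegativity of every term, and on the multiplicities satisfying $m_j\geq1$. With that in hand, the rest is combinatorics on the tree, and the construction matches the final sentence of the statement.
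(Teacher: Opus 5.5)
Your proposal is correct and follows essentially the same route as the paper. Both arguments compare degrees using the numerical equivalence of all fibers and the nefness of $A_S$; this gives a noncontracted component of $\pi_S^*(y)$ with $A_S$-degree at most $d_F\le r_k$. Both then follow a chain of contracted components in the connected tree from the component through $\sigma(y)$ to the first noncontracted one, so that the prescribed value $(f\circ\rho)(y)$ propagates along it.
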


\begin{proof}
The line bundle $A_S$ is nef, and nonconstancy of $\Phi|_F$ gives
$1\leq d_F\leq r_k$. Fix $y\in\widetilde C$ and write the
scheme-theoretic fiber as
\[
 \pi_S^*(y)=\sum_i m_i\Gamma_i,
 \qquad m_i\in\mathbb Z_{>0}.
\]
By the construction in Corollary~\ref{cor:ruled-realization}, its reduced
support is connected and every $\Gamma_i$ is a smooth rational curve.
All fibers of $\pi_S$ are numerically equivalent: points of
$\widetilde C$ are numerically equivalent, and their pullbacks are the
fiber divisors. Consequently
\begin{equation}\label{eq:special-fiber-degree}
 \sum_i m_i(A_S\cdot\Gamma_i)=A_S\cdot F=d_F>0.
\end{equation}
Every summand is nonnegative, so some $\Gamma_i$ has positive
$A_S$-degree; equivalently, $\Phi|_{\Gamma_i}$ is nonconstant.

Choose a chain from a fiber component containing $\sigma(y)$
to a nonconstant component, and stop at the first nonconstant one.
If this is the initial component, its image contains $(f\circ\rho)(y)$.
Otherwise the preceding components are constant, and their values
agree at successive intersections. Each value is therefore
$(\Phi\circ\sigma)(y)=(f\circ\rho)(y)$, so the last intersection gives
a point of the nonconstant component with that image. Choose an
isomorphism from $\PP^1$ to that component sending $0$ to such a point.
Composing with $\Phi$ gives the required
morphism $b_y:\PP^1\to X$.

Equation~\eqref{eq:special-fiber-degree} also gives
$\deg b_y^*\OO_X(1)\leq d_F\leq r_k$, as asserted.
\end{proof}

\begin{proof}[Proof of Theorem~\ref{thm:main}]
Since $h_k\to\infty$, let $k$ be the least positive integer with
$dh_k>2(n+1)\delta a$. The elementary estimate
$h_k\geq\log(k+1)$ shows, for example, that
\begin{equation}\label{eq:jet-order-upper-bound}
 k\leq
 \left\lceil\exp\!\left(\frac{2(n+1)\delta a}{d}\right)\right\rceil.
\end{equation}
Apply Corollary~\ref{cor:ruled-realization}, and fix this construction
before choosing $x$. The finite morphism
$\rho:\widetilde C\to C$ is surjective, so for any $x\in f(C)$ one can
choose $y\in\widetilde C$ with $(f\circ\rho)(y)=x$. By
Lemma~\ref{lem:prescribed-point}, there is a nonconstant morphism
$\PP^1\to X$ through $x$.
\end{proof}

\section{Numerical decomposition and its consequences}
\label{sec:numerical}

The zero section records the original curve, and the ruling fibers
produce rational curves. Comparing the zero and infinity sections gives
an effective numerical decomposition relating these two classes.

Retain the data of Corollary~\ref{cor:ruled-realization}.
In this section, an integral curve in a cycle expression is identified
with its associated one-cycle. Put
$Z_C=f_*[C]$, $\mu=\dfrac{\deg L}{\deg\rho}\in\mathbb Q_{>0}$, and
$Z_F=\Phi_*F$, where $F$ is a smooth general fiber of
$\pi_S:S\to\widetilde C$. The estimate
\eqref{eq:positive-slope} gives
\[
 \mu>\frac{dh_k}{2(n+1)k}.
\]

Numerical equivalence of one-cycles is denoted by $\equiv$: one has
$Z\equiv Z'$ if and only if $D\cdot Z=D\cdot Z'$ for every Cartier divisor $D$.
Write $N_1(X)_{\mathbb R}=Z_1(X)_{\mathbb R}/\!\equiv$ and $[Z]$ for the
numerical class of $Z$. The closed cone of curves is
\[
 \NE(X)=\overline{\operatorname{Cone}\{[C]\mid C\subset X
 \text{ an integral curve}\}}\subset N_1(X)_{\mathbb R}.
\]

An effective $\mathbb Q$-cycle means a finite sum
\[
 Z=\sum_i m_i C_i,\qquad m_i\in\mathbb Q_{\geq0},
\]
where the $C_i$ are arbitrary integral curves on $X$. An effective
cycle has integer coefficients; an effective cycle of rational curves
has, in addition, rational irreducible components. We allow $Z=0$
in each case.

For a proper morphism $\Phi:S\to X$ and an integral curve
$\Gamma\subset S$ with curve image $R=\Phi(\Gamma)$, we use the
cycle-theoretic pushforward
\[
\Phi_*\Gamma
=
[\Bbbk(\Gamma):\Bbbk(R)]\,R,
\]
while $\Phi_*\Gamma=0$ if $\Gamma$ is contracted. In particular,
$Z_F=e_FR_F$, where $R_F=\Phi(F)_{\mathrm{red}}$ is an integral rational
curve and $e_F=\deg(F\to R_F)\geq1$.

To compare the two sections, recall the blowup map
$\beta:S\to W$ from Corollary~\ref{cor:ruled-realization},
and write $\pi_W:W\to\widetilde C$ for the ruling.
The zero and infinity sections are
$\Sigma_0=\Pline(\OO_{\widetilde C})$ and $\Sigma_\infty=\Pline(L)$,
viewed as subvarieties of $W=\Pline(\OO_{\widetilde C}\oplus L)$.
Let $\widetilde\Sigma_\infty$ be the strict transform of $\Sigma_\infty$ in $S$.
The total transform of $\Sigma_\infty$ is
\[
 \beta^*\Sigma_\infty
   =\widetilde\Sigma_\infty+\sum_j a_jE_j,
 \qquad a_j\in\mathbb Z_{\geq0},
\]
where the $E_j$ are irreducible exceptional curves and the $a_j$
include the multiplicities from successive blowups. Put
\[
 Z_\infty=\Phi_*\widetilde\Sigma_\infty,
 \qquad Z_{\mathrm{exc}}=\sum_j a_j\Phi_*E_j.
\]

\begin{proposition}[An effective numerical decomposition]\label{prop:decomposition}
The cycle $Z_\infty$ is effective, $Z_{\mathrm{exc}}$ is an effective cycle
of rational curves, and
\begin{equation}\label{eq:effective-decomposition}
 (\deg\rho)Z_C\equiv Z_\infty+(\deg L)Z_F+Z_{\mathrm{exc}}.
\end{equation}
The cycle $Z_F$ is nonzero and has rational curve components. For every
closed point $x\in f(C)$, there are an integral rational curve $R_x$
through $x$ and an effective $\mathbb Q$-cycle $Q_x$ such that
\begin{equation}\label{eq:initial-domination}
 Z_C\equiv\mu R_x+Q_x.
\end{equation}
In particular, $[Z_C]-\mu[R_x]\in\NE(X)$.
\end{proposition}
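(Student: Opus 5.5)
The plan is to obtain \eqref{eq:effective-decomposition} from a divisor relation between the two sections of $W$, transported to $S$ and pushed forward by $\Phi$. Then \eqref{eq:initial-domination} follows by replacing the general fiber with the special fiber over a preimage $y$ of $x$.

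\emph{The relation on $W$.} I first show that on $W=\Pline(\OO_{\widetilde C}\oplus L)$
\[
 \Sigma_0\sim\Sigma_\infty+\pi_W^*M,\qquad \deg M=\deg L .
\]
The two coordinate projections of $\OO_{\widetilde C}\oplus L$ restricted to the tautological line give sections of $\OO_W(1)$ and of $\OO_W(1)\otimes\pi_W^*L$. Their zero loci are the two disjoint sections, and this already yields a relation of the displayed shape for some line bundle $M$ on $\widetilde C$.

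To pin down the sign and degree, I use normal bundles. The zero section of $\Tot(L)$ has normal bundle $L$, so $\Sigma_0^2=\deg L$. Also $\Sigma_0\cdot\Sigma_\infty=0$ and $\Sigma_0\cdot F=1$. Intersecting the relation with $\Sigma_0$ then forces $\deg M=\deg L$. Keeping the conventions straight here, in particular which section has normal bundle $L$ versus $L^{-1}$, is the step I expect to need the most care. The self-intersection check is the safeguard against a sign error.

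\emph{Transport to $S$ and push forward.} Next I pull the relation back by $\beta$. Since all blowup centers avoid $\Sigma_0$ (Corollary~\ref{cor:ruled-realization}), we have $\beta^*\Sigma_0=\sigma(\widetilde C)$. On the other side, $\beta^*\Sigma_\infty=\widetilde\Sigma_\infty+\sum_ja_jE_j$, and $\beta^*\pi_W^*M=\pi_S^*M$. Writing $M$ as a difference of points shows that $\pi_S^*M$ is linearly equivalent to a difference of fiber divisors. Every fiber divisor $\pi_S^*(y)$ is numerically equivalent to $F$, so $\pi_S^*M\equiv(\deg L)F$.

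Pushforward by the proper morphism $\Phi$ preserves rational equivalence of one-cycles, and for numerical classes we can use the projection formula $D\cdot\Phi_*Z=\Phi^*D\cdot Z$. Together these give
\[
 \Phi_*\sigma(\widetilde C)\equiv Z_\infty+(\deg L)Z_F+Z_{\mathrm{exc}}.
\]
Since $\Phi\circ\sigma=f\circ\rho$ and $\sigma$ is a section, $\Phi_*\sigma(\widetilde C)=(f\circ\rho)_*[\widetilde C]=(\deg\rho)Z_C$, which is \eqref{eq:effective-decomposition}.

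The effectivity claims are immediate. Every $a_j\geq0$, the $E_j$ are rational curves, and $Z_\infty$ is the pushforward of an integral curve. Finally, $Z_F=e_FR_F\neq0$ because $\Phi|_F$ is nonconstant by \eqref{eq:embedding-bound}.

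\emph{Through a prescribed point.} Given $x\in f(C)$, choose $y$ with $(f\circ\rho)(y)=x$ and write $\pi_S^*(y)=\sum_im_i\Gamma_i$. In the decomposition, replace $(\deg L)F$ by the numerically equivalent cycle $(\deg L)\sum_im_i\Gamma_i$ before pushing forward. Lemma~\ref{lem:prescribed-point} supplies a noncontracted component $\Gamma_{i_0}$ whose image $R_x=\Phi(\Gamma_{i_0})$ is an integral rational curve through $x$, and $\Phi_*\Gamma_{i_0}=eR_x$ with $e\geq1$.

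Dividing by $\deg\rho$ gives
\[
 Z_C\equiv\mu R_x+Q_x,
\]
where
\[
 Q_x=\mu(m_{i_0}e-1)R_x+\frac1{\deg\rho}\Bigl(Z_\infty+Z_{\mathrm{exc}}+(\deg L)\sum_{i\neq i_0}m_i\Phi_*\Gamma_i\Bigr).
\]
This $Q_x$ is an effective $\mathbb Q$-cycle because $m_{i_0}e\geq1$. Hence $[Z_C]-\mu[R_x]=[Q_x]\in\NE(X)$.
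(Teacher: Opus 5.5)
Your proposal is correct and follows the paper's proof step by step. You obtain the section relation $\Sigma_0-\Sigma_\infty\sim\pi_W^*L$ on $W$, pull it back by $\beta$ using that the centers avoid $\Sigma_0$, push it forward by $\Phi$, and then replace the general fiber by the special fiber $\pi_S^*(y)$ to isolate $R_x$ with coefficient at least $\mu$. The only cosmetic difference is how $\deg M$ is fixed: you use the check $\Sigma_0^2=\deg L$, whereas the paper reads off $\OO_W(\Sigma_\infty)\simeq\OO_W(1)$ and $\OO_W(\Sigma_0)\simeq\OO_W(1)\otimes\pi_W^*L$ directly from which projection vanishes on which section, which gives $M\simeq L$ on the nose.
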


\begin{proof}
We first compare the two sections $\Sigma_0$ and $\Sigma_\infty$ on
$W$; the relation between them becomes a cycle relation on $X$ once it
is pulled back to $S$ and pushed forward by $\Phi$.
The projections from the tautological subline $\OO_W(-1)$ to
$\OO_W$ and $\pi_W^*L$
vanish along $\Sigma_\infty$ and $\Sigma_0$, respectively. Hence
\[
 \OO_W(\Sigma_\infty)\simeq\OO_W(1),
 \qquad
 \OO_W(\Sigma_0)\simeq\OO_W(1)\otimes\pi_W^*L.
\]
Taking their quotient gives
\[
 \OO_W(\Sigma_0-\Sigma_\infty)\simeq \pi_W^*L,
 \qquad \Sigma_0\equiv\Sigma_\infty+(\deg L)F_W,
\]
where $F_W$ is a ruling fiber. All blowup centers avoid
$\Sigma_0$, so $\beta^*\Sigma_0=\sigma(\widetilde C)$, and
\[
 \Phi_*\sigma(\widetilde C)=(f\circ\rho)_*\widetilde C
   =(\deg\rho)Z_C.
\]
The total pullback of a ruling fiber is numerically equivalent to $F$.
Pulling back the section relation and pushing forward by $\Phi$
therefore gives \eqref{eq:effective-decomposition}.
The projection formula preserves numerical equivalence, and contracted
components have zero pushforward. The noncontracted images of
$E_j\simeq\PP^1$ and of $F\simeq\PP^1$ are rational, and
$\Phi|_F$ is nonconstant. The components of $Z_\infty$ need not be
rational curves.

For $x\in f(C)$, choose $y\in \widetilde C$ with
$(f\circ\rho)(y)=x$. By Lemma~\ref{lem:prescribed-point}, choose a
noncontracted component $\Gamma_i$ of $\pi_S^*(y)$ whose reduced image
$R_x$ contains $x$. If $m_i$ is the multiplicity of $\Gamma_i$ in the
scheme-theoretic fiber, then the coefficient of $R_x$ in
\[
 Z_y=\Phi_*\bigl(\pi_S^*(y)\bigr)\equiv Z_F
\]
is at least $m_i\deg(\Phi|_{\Gamma_i}:\Gamma_i\to R_x)\geq1$.
Hence $Z_y-R_x$ is effective, and the right-hand side of
\[
 Z_C-\mu R_x
 \equiv \frac{1}{\deg\rho}(Z_\infty+Z_{\mathrm{exc}})
       +\mu(Z_y-R_x)
\]
is the required effective $\mathbb Q$-cycle $Q_x$.
\end{proof}

We can now shorten the rational curve, measured by the degree with
respect to a fixed ample divisor $H$, by two-point bend-and-break.
The point of the argument is that replacing a curve by a component of a
broken effective cycle keeps the effective remainder, with the same
coefficient $\mu$. Only the effective domination already obtained, and
the smoothness and projectivity of $X$, are used.

A real Cartier divisor $P$ is nef if $P\cdot R\geq0$ for every integral
curve $R\subset X$. A face $\mathcal F$ of $\NE(X)$ is a subcone such
that $u,v\in\NE(X)$ and $u+v\in\mathcal F$ imply
$u,v\in\mathcal F$.

\begin{corollary}[Domination after bend-and-break]\label{cor:dominated}
For every closed point $x\in f(C)$, there are an integral rational
curve $R_x$ through $x$ and an effective $\mathbb Q$-cycle $Q_x$ with
\begin{equation}\label{eq:dominated-short}
 Z_C\equiv\mu R_x+Q_x,
 \qquad -K_X\cdot R_x\leq n+1.
\end{equation}
Moreover, for every nef real Cartier divisor $P$ one has
\[
 P\cdot R_x\leq\frac{P\cdot Z_C}{\mu}
 \leq\frac{2(n+1)k}{dh_k}\,P\cdot Z_C,
\]
and the class of $R_x$ belongs to every face of $\NE(X)$ containing
$[Z_C]$.
\end{corollary}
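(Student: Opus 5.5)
Starting from Proposition~\ref{prop:decomposition}, fix $x\in f(C)$ and an ample divisor $H$ on $X$ (say $\OO_X(1)$). Consider the set $\mathcal R_x$ of integral rational curves $R$ through $x$ for which $Z_C-\mu R$ is numerically equivalent to an effective $\mathbb Q$-cycle. By Proposition~\ref{prop:decomposition} it is nonempty, and every member satisfies $H\cdot R\le H\cdot Z_C/\mu$. I will choose $R_x\in\mathcal R_x$ with $H\cdot R_x$ minimal; this is possible since $H$-degrees are positive integers. The goal is to show $-K_X\cdot R_x\le n+1$.

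\textbf{Bend-and-break step.} Suppose $-K_X\cdot R_x\ge n+2$. Let $g:\PP^1\to X$ be the normalization of $R_x$, with $g(0)=x$, and fix a second point $\infty$. Then
\[
 \dim_{[g]}\operatorname{Hom}(\PP^1,X;0\mapsto g(0),\infty\mapsto g(\infty))\ge -K_X\cdot R_x-n\ge 2 .
\]
After quotienting by the one-dimensional group of automorphisms of $\PP^1$ fixing $0$ and $\infty$, a curve of deformations with both points fixed survives. By two-point bend-and-break (\cite[Proposition~3.2]{Deb} or \cite[II.5]{Kol96}; this holds in characteristic zero and needs only smoothness and projectivity), $g_*\PP^1$ is then numerically, indeed algebraically, equivalent to an effective cycle $\sum_i b_iR_i$ of rational curves with at least two components counted with multiplicity, and with a component through $x$. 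Call it $R'$, with $b'\ge1$. Then
\[
 Z_C-\mu R'\equiv Q_x+\mu\Bigl(\sum_i b_iR_i-R'\Bigr)
\]
is an effective $\mathbb Q$-cycle, so $R'\in\mathcal R_x$. Since $H$ is ample and there are at least two components, $H\cdot R'<H\cdot g_*\PP^1=H\cdot R_x$, which contradicts minimality. Hence $-K_X\cdot R_x\le n+1$.

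\textbf{Remaining assertions.} If $P$ is nef, then $P\cdot Q_x\ge0$, so $\mu\,P\cdot R_x\le P\cdot Z_C$. The bound $\mu>dh_k/(2(n+1)k)$ from \eqref{eq:positive-slope} gives the second inequality, using $P\cdot Z_C\ge0$. For faces, $[Z_C]=\mu[R_x]+[Q_x]$ writes the class as a sum of two elements of $\NE(X)$, so both lie in any face $\mathcal F$ containing $[Z_C]$. Since $\mathcal F$ is a cone, $[R_x]=\mu^{-1}(\mu[R_x])\in\mathcal F$.

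\textbf{Main obstacle.} The delicate point is the bend-and-break input. It must deliver a degeneration that still has a rational component through the fixed point $x$, and it must be stated for a possibly non-free rational curve on an arbitrary smooth projective $X$ in characteristic zero. I would cite the standard form with two points fixed, in which the limit cycle is connected and contains both fixed points. One also has to check that the algebraic equivalence in that statement is compatible with numerical equivalence and with the effective remainder bookkeeping, which is immediate once everything is pushed forward as cycles.
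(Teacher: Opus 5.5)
Your proposal is correct and follows the paper's proof essentially step for step. You minimize the $H$-degree among integral rational curves $R$ through $x$ with $[Z_C]-\mu[R]$ effective, and use two-point bend-and-break (via $-K_X\cdot R_x-n\geq 2$) to replace $R_x$ by a component $R'\ni x$ of strictly smaller $H$-degree, keeping $Q_x+\mu(\sum_i b_iR_i-R')$ effective; the nef inequality and the face property then follow from $[Z_C]=\mu[R_x]+[Q_x]$ exactly as in the paper.
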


\begin{proof}
Fix an ample Cartier divisor $H$. Choose an integral rational curve
$R$ through $x$ of smallest $H$-degree among those for which
$[Z_C]-\mu[R]$ is represented by an effective $\mathbb Q$-cycle.
Such curves exist by Proposition~\ref{prop:decomposition}, and their
$H$-degrees are positive integers. Suppose that
$-K_X\cdot R>n+1$. Parametrize the normalization by
$\nu:\PP^1\to R\subset X$, with $\nu(0)=x$ and
$\nu(\infty)\ne x$, so that $\nu_*[\PP^1]=R$.
Since $-K_X\cdot R$ is an integer, it is at least $n+2$.
Write $\operatorname{Hom}(\PP^1,X)$ for the
scheme of morphisms. Adding point conditions after a semicolon denotes
the closed subscheme of maps with those prescribed values, before
quotienting by reparametrization. Deformation theory and
Riemann--Roch give the local dimension bound
\[
 \begin{aligned}
 &\dim_{[\nu]}\operatorname{Hom}
 \bigl(\PP^1,X;0\mapsto x,\infty\mapsto\nu(\infty)\bigr)\\
 &\qquad\geq\chi(\PP^1,\nu^*T_X)-2n
 =-K_X\cdot R-n\geq2.
 \end{aligned}
\]
The automorphisms of $\PP^1$ fixing $0$ and $\infty$ form a
one-dimensional group, so beyond reparametrization the bound still
provides deformations.
Two-point bend-and-break~\cite[\textup{(6.2)} and Proposition~7.3]{Deb}
therefore gives a connected, nonintegral effective one-cycle $Z'$ with
rational components, numerically equivalent to $R$ and passing through
both fixed image points $x$ and $\nu(\infty)$. Choose an integral
component $R'$ of $Z'$ through $x$. Its $H$-degree is strictly smaller
than $H\cdot R$: this is immediate if $Z'$ has more than one component,
and it also holds if $Z'$ is a nontrivial multiple of a single component.
Let $Z_{\mathrm{res}}$ be an effective $\mathbb Q$-cycle representing
$[Z_C]-\mu[R]$. Then $R'$ contradicts minimality, since
\[
 [Z_C]-\mu[R']=[Z_{\mathrm{res}}]+\mu[Z'-R'],
\]
and $Z_{\mathrm{res}}+\mu(Z'-R')$ is an effective $\mathbb Q$-cycle.
This proves \eqref{eq:dominated-short}.

The simultaneous inequalities follow by intersecting with a nef divisor
and inserting the lower bound for $\mu$. If $[Z_C]$ lies in a face
$\mathcal F$, then its decomposition as
$\mu[R_x]+([Z_C]-\mu[R_x])$ is a sum of two classes in
$\NE(X)$, so the defining property of a face gives
$[R_x]\in\mathcal F$.
\end{proof}

Because $\mu$ was fixed before $x$ was chosen, the same coefficient
works for every closed point. This completes the proof of
Theorem~\ref{thm:effective-domination}.

The anticanonical degree need not be positive in general. The
constant-slope condition below is an additional hypothesis on the
chosen face; it is automatic for a one-dimensional face containing
$[Z_C]$, since $d>0$.

\begin{corollary}[A face of constant slope]\label{cor:face}
Let $H$ be an ample Cartier divisor, and let $\mathcal F$ be a
face of $\NE(X)$ containing $[Z_C]$. Suppose that
$-K_X\cdot\gamma=\lambda H\cdot\gamma$ for every
$\gamma\in\mathcal F$, with $\lambda>0$ fixed.
Then every closed point $x\in f(C)$ lies on an integral rational curve $R_x$
whose class belongs to $\mathcal F$ and satisfies
\[
 0<-K_X\cdot R_x\leq n+1,
 \qquad H\cdot R_x\leq\frac{n+1}{\lambda}
       =(n+1)\frac{H\cdot Z_C}{d}.
\]
\end{corollary}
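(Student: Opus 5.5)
The plan is to apply Corollary~\ref{cor:dominated} directly and then use the face hypothesis to turn the bend-and-break bound into a positive slope bound. Fix $x\in f(C)$. Corollary~\ref{cor:dominated} provides an integral rational curve $R_x$ through $x$ and an effective $\mathbb Q$-cycle $Q_x$ with $Z_C\equiv\mu R_x+Q_x$ and $-K_X\cdot R_x\leq n+1$. The same corollary also shows that $[R_x]$ lies in every face of $\NE(X)$ containing $[Z_C]$, so in particular $[R_x]\in\mathcal F$.

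Next, apply the constant-slope hypothesis to $\gamma=[R_x]\in\mathcal F$. This gives $-K_X\cdot R_x=\lambda\,H\cdot R_x$. Since $H$ is ample and $R_x$ is an integral curve, $H\cdot R_x>0$. With $\lambda>0$ this yields $-K_X\cdot R_x>0$. Combining with the upper bound $-K_X\cdot R_x\leq n+1$, we get $H\cdot R_x=(-K_X\cdot R_x)/\lambda\leq(n+1)/\lambda$.

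For the final equality, apply the hypothesis to $\gamma=[Z_C]\in\mathcal F$. This gives $d=-K_X\cdot Z_C=\lambda\,H\cdot Z_C$. Because $d>0$, we also have $H\cdot Z_C>0$, and $1/\lambda=H\cdot Z_C/d$. Substituting yields $(n+1)/\lambda=(n+1)H\cdot Z_C/d$.

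There is no substantial obstacle. The only point needing care is that face membership of $[R_x]$ really comes from the effective decomposition: $[Z_C]=\mu[R_x]+[Q_x]$ with both summands in $\NE(X)$ and $\mu>0$. Corollary~\ref{cor:dominated} already records this.
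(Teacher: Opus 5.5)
Your proposal is correct and follows essentially the same route as the paper: apply Corollary~\ref{cor:dominated} to get $R_x$ with $[R_x]\in\mathcal F$ and $-K_X\cdot R_x\leq n+1$. Then use the slope identity on $[R_x]$ (positivity via ampleness of $H$) and on $[Z_C]$ to get $d=\lambda H\cdot Z_C$, which gives both bounds.
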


\begin{proof}
Apply Corollary~\ref{cor:dominated}. The slope identity on
$\mathcal F$ gives $-K_X\cdot R_x=\lambda H\cdot R_x>0$ together with
$d=\lambda H\cdot Z_C$, so both bounds follow.
\end{proof}

This applies, in particular, when $[Z_C]$ spans a $K_X$-negative
extremal ray.

\begin{remark}[Fano varieties]
For a smooth Fano variety and a prescribed closed point $x$, take the
normalization of any integral projective curve through $x$.
Ampleness of $-K_X$ gives the required positive anticanonical degree,
so Corollary~\ref{cor:dominated} yields
$0<-K_X\cdot R_x\leq n+1$ through every such point.
\end{remark}

\section{The projective uniruledness criterion}\label{sec:bdpp}

A divisor class is pseudoeffective if it lies in the closure of
the cone of effective divisor classes in $N^1(X)_{\mathbb R}$, the
real vector space of Cartier divisors modulo numerical equivalence.
The variety $X$ is uniruled if a family of nonconstant maps from $\PP^1$,
parametrized by a finite-type variety, has dominant evaluation.

When $K_X$ is not pseudoeffective, BDPP cone duality yields a covering
family of curves of negative canonical degree. To turn the pointwise
construction into a covering
family of rational curves, we use the uniform embedding-degree bound of
Corollary~\ref{cor:ruled-realization} together with its specialization
in Lemma~\ref{lem:prescribed-point}.

\begin{proof}[Proof of Theorem~\ref{thm:bdpp}]
Suppose that $K_X$ is not pseudoeffective. Fix a projective embedding
$X\hookrightarrow\PP^N$, write $n=\dim X$, and let $\delta$ bound the
degrees of the defining equations, as in Section~\ref{sec:weighted-jets}.
If $n=1$, then $\deg K_X<0$; applying Theorem~\ref{thm:main} to
$\mathrm{id}_X$ gives a nonconstant morphism $\PP^1\to X$, whose image
is $X$. For the rest of this implication, assume $n\geq2$.

By the strongly movable form of BDPP cone duality
\cite[Definition~1.3(v) and Theorem~2.2]{BDPP13}, there are a smooth
projective variety $\widehat X$, a birational morphism
$\varpi:\widehat X\to X$, and very ample divisors
$A_1,\ldots,A_{n-1}$ on $\widehat X$ such that
\[
 \alpha=\varpi_*(A_1\cdots A_{n-1}),\qquad K_X\cdot\alpha<0.
\]
Indeed, nonnegativity on every such generating class would imply
nonnegativity on the strongly movable cone and its closure.
By Bertini's theorem, a nonempty open subset
$T\subset |A_1|\times\cdots\times|A_{n-1}|$ parametrizes smooth
integral complete-intersection curves $C_t$ meeting the locus where
$\varpi$ is an isomorphism. Put $f_t=\varpi|_{C_t}:C_t\to X$.
These maps are nonconstant and have dominant total evaluation.
To see the latter assertion, the incidence variety over the full product
of linear systems is a fiber product of projective bundles over
$\widehat X$, hence is irreducible and surjects onto $\widehat X$.
Its restriction to the nonempty open set $T$ still has dense image,
and the same holds after composition with $\varpi$.

Since $(f_t)_*[C_t]\equiv\alpha$, the integers
\[
 a_0=\OO_X(1)\cdot\alpha>0,\qquad
 d_0=-K_X\cdot\alpha>0
\]
are independent of $t$. The total evaluation is a finite-type algebraic
morphism, so its constructible dense image contains a dense open subset
of $X$. Let $k$ be the least positive integer with
$d_0h_k>2(n+1)\delta a_0$, and apply the construction to the maps $f_t$.
By \eqref{eq:embedding-bound} and Lemma~\ref{lem:prescribed-point},
every closed point of each $f_t(C_t)$ lies on the image of a nonconstant
morphism of $\OO_X(1)$-degree at most
\[
 M=\left\lceil\frac{2(n+1)ka_0}{d_0h_k}\right\rceil-1.
\]
Indeed, the constructed degree is a positive integer strictly smaller
than $2(n+1)ka_0/(d_0h_k)$, which gives the displayed bound and
$M\geq1$. The integer $M$ depends only on $n$, on the fixed embedding
and the equation degree bound $\delta$, and on $a_0,d_0$; it does not
depend on the genus or on $t$.
The degree of the additional finite cover cancels in $\deg L/\deg\rho$,
as in Proposition~\ref{prop:positive}, so no uniform bound on the
cover degrees is needed. For any resulting map $b:\PP^1\to X$ with
reduced image $R$, the projection formula gives
\[
 \deg b^*\OO_X(1)=\deg(\PP^1\to R)\,\bigl(\OO_X(1)\cdot R\bigr).
\]
Thus the integral rational curve $R$ has degree at most $M$, and its
normalization gives a morphism of degree $\OO_X(1)\cdot R$.
In this way we obtain such maps through all closed points of a dense
open subset of $X$. The choices made for different $C_t$ need not vary
algebraically with $t$: the uniform degree bound places all the resulting
maps in a fixed finite union of parameter spaces.

\begingroup\postdisplaypenalty=10000
For $1\leq e\leq M$, let $\operatorname{Hom}_e(\PP^1,X)$ parametrize
the morphisms of $\OO_X(1)$-degree $e$. These schemes are of finite
type~\cite[\S6.1]{Deb}. The evaluation morphism from
\[
 \PP^1\times\coprod_{e=1}^M\operatorname{Hom}_e(\PP^1,X)
 \longrightarrow X
\]
has dense image. Its source has finitely many irreducible components,
so one of them has dominant evaluation. Thus $X$ is uniruled.
\par\endgroup

Conversely, suppose that $X$ is uniruled. In characteristic zero,
uniruledness is separable, so we may choose an irreducible covering
family of free maps $b:\PP^1\to X$ in $\operatorname{Hom}(\PP^1,X)$
\cite[Corollary~9.13(b)]{Deb}. Let $\gamma=[b_*[\PP^1]]$ be their
common numerical class. Freeness means that $b^*T_X$ is globally
generated, so in the splitting
$b^*T_X=\bigoplus_i\OO_{\PP^1}(a_i)$ one has $a_i\geq0$. The nonzero
differential $\OO_{\PP^1}(2)\to b^*T_X$ forces some $a_i\geq2$, so
$K_X\cdot\gamma=-\deg b^*T_X\leq-2$.
For every effective divisor $D$, a general member of the family is not
contained in the support of $D$, so $D\cdot\gamma\geq0$. By continuity,
every pseudoeffective divisor class has nonnegative intersection with
$\gamma$. Hence $K_X$ is not pseudoeffective.
\end{proof}

\section*{Acknowledgments}
We are grateful to Mihai P\u{a}un for his careful reading of an earlier
draft and for his insightful questions and suggestions, which greatly
improved both our understanding of the subject and the exposition of
this paper.

The authors would like to express their sincere gratitude to Bin Dong,
Guoxiong Gao, Jiedong Jiang, Shurui Liu, Zeming Sun, and
Bin Wu for their substantial contributions to the development and refinement of Pharos.

The authors would also like to thank Jihao Liu, Bohan Fang, Jingjun Han, Guchuan Li, Ruochuan Liu, Yujie Luo, Zhenfu Wang, and Yijun Yuan for their valuable feedback and suggestions, which helped improve Pharos.

The authors are grateful to Wanyi He for a careful
check of the Lean formalization, and warmly thank Axel
Delaval and Zhiyuan Zhang for generously
sharing their experience with formalization in Lean.

The authors are grateful to Leheng Chen and Zhenyu Liao for their assistance with the computational infrastructure supporting our experiments.

\appendix

\clearpage

\section{How the problem reached Pharos}\label{app}

\begin{center}
by Bin Guo and Song-Yan Xie
\end{center}

Bin Guo and Song-Yan Xie were looking for an analytic proof of the Miyaoka--Mori criterion. Earlier conversations with AI systems had led them down several unsuccessful paths, recorded in ZIP archives. The archives were meant to serve as a map of roads not to take. Guo and Xie still believed the problem could be solved and had an idea of their own, as yet untested. Before trying it, they decided to put Pharos to the test, though they doubted that an AI system would find a proof. They contacted Guoxiong Gao and asked him to submit the archives with the problem, hoping at least to spare Pharos the routes they had already explored.

About nineteen hours into the run, the Pharos team sent them an AI-generated progress report. Guo and Xie were baffled: it proposed precisely the approaches the archives were supposed to rule out. Had Pharos failed even to take the hint? They suggested stopping the run before it spent more tokens on familiar dead ends. The team, however, followed its own stopping rule: while Pharos still saw a promising direction, they kept it running.

An hour later, the team reported a proof.

Only afterward did Guo and Xie learn that there had been no hint for Pharos to take. The ZIP archives had accidentally been left out of the first run; Pharos had received only the problem. In a separate experiment with the archives included from the outset, it reached essentially the same jet-based proof in about ten hours.

\clearpage

\section{Use of generative AI}\label{app:ai}
\begin{center}
by Bin Dong\footnote{Beijing International Center for Mathematical
Research and Center for Machine Learning Research, Peking University,
Beijing 100871, China. Email: \texttt{dongbin@math.pku.edu.cn}},
Guoxiong Gao\footnote{School of Mathematical Sciences, Peking
University, Beijing 100871, China. Email:
\texttt{samggx@stu.pku.edu.cn}}, Zeming Sun\footnote{Beijing
International Center for Mathematical Research, Peking University,
Beijing 100871, China. Email: \texttt{zeming@kurims.kyoto-u.ac.jp}},
and Bin Wu\footnote{School of Mathematics, Tianjin University,
Tianjin 300354, China. Email: \texttt{s-wb25@bza.edu.cn}}
\end{center}

The main result of this paper, the algebraic construction of rational
curves, was found by Pharos, an automated mathematical reasoning system designed in light of prior experience with Danus~\cite{Liu26}. Guoxiong Gao led the
design of Pharos; for the problem treated here, Zeming
Sun, one of its designers, ran the system and analyzed how it reached
the result. Bin Guo and Song-Yan Xie supplied the problem,
the first question in Appendix~F of Du--Guo--Xie~\cite{DGX26}: for a
nonconstant map $f:C\to X$ from a smooth projective curve, can
$-K_X\cdot f_*[C]>0$ alone produce a rational curve by continuing
disks over $C$ under a strict area bound? Working fully autonomously and without human intervention, Pharos
answered the question as posed.

Bin Guo and Song-Yan Xie undertook to understand, rewrite, expand, and
develop the argument that Pharos produced, and to check its
mathematical correctness. No step of the proof relies on a numerical
computation or on the system's internal acceptance decision, and they
take full responsibility for the mathematical content and
presentation of the main text of the paper.

The proof has also been checked in Lean~4\footnote{The Lean sources
are available at \url{https://github.com/frenzymath/MiyaokaMori-CharZero}.}:
using SHEAF\footnote{SHEAF will
be released at \url{https://github.com/frenzymath/SHEAF}.} (Scalable
Hierarchical Engine for Autonomous Formalization), a system of
autonomous agents built and run by Bin Wu and described at the end of this
appendix, the argument of
Sections~\ref{sec:weighted-jets}--\ref{sec:completion} was formalized
over Mathlib with no human intervention in the proving process.

Pharos is a reasoning system designed for long-horizon research on
challenging, open-ended mathematical problems for which no reasonably
workable route is known in advance. It inherits selected design principles from Danus and
Rethlas~\cite{Ju26}, especially the fact graph and the multi-agent architecture,
while substantially redesigning and extending the remaining system components.

In Pharos, several core components are introduced to assist the main agent, both in mathematics and in global strategy.
We provide helper agents at its disposal: they assess
the workers' progress, collect and digest the literature, and scout
candidate routes before a worker is committed to one, while their
reports inform the main agent without ever entering a proof.
Freed in
this way from much of its context burden, the main agent can
concentrate on the mathematics and on the overall strategy, which it
maintains on a route board, a new central data structure that records
the state of every route. It begins by surveying the techniques that
the literature has applied to the problem, so that every known
approach is tried before new ones are invented. It then monitors each
route and judges whether it is still making real progress or has
stalled and should be stopped, and from what it observes it proposes new routes. The data
structures, skills, control loop, and tools built around this logic
give the system the ability to climb a smooth cliff face: it misses no
foothold hidden in a known approach, finds new ones, and keeps them
moving.

We further equip Pharos with several additional capabilities. Workers reason more deeply, and a
fact has grown from a local lemma or example into a substantial
self-contained advance, which lets a model as capable as GPT-6 Astra be
used to the full. With proper scheduling of computing resources, Pharos
can now write code and run local numerical computations. Its interface with human operators is designed for close collaboration: a new writing system produces
shorter and clearer manuscripts, and produces them faster, and new
skills and a reporter agent let a person follow a run and make the
system follow instructions more faithfully, which favors closer
collaboration. Finally, the command-line tools that schedule its agents are designed to make the system run smoothly and its workings more transparent, and
the context passed to the models during tool use is compressed without
loss of information, which reduces cost and raises throughput. Together
these changes make Pharos markedly more effective and easier to use.

In the run reported here, agents of every role ran on GPT-6 Astra,
with the main agent at effort level ``ultra'', three workers at
``xhigh'' and four at ``max'', and the verifier at ``max''. It took 19 hours and 57 minutes
from the creation of the project to the acceptance of the target, and
a further 1 hour and 52 minutes to write and check its own manuscript.
Its
token usage is recorded in Table~\ref{tab:tokens}; in all, the run
consumed slightly less than twice the weekly usage limit of a \$200
ChatGPT Pro subscription.
The final fact graph contains 273 verified facts. One further fact was
revoked during the run for a quantifier error and independently
reproved. The global memory contains 282 conclusions, 75 identified
obstacles, 305 directions, 420 proof attempts, 263 plans, 77
counterexamples, 82 recorded dead ends, and 382 verification records,
of which 97 are rejections. Facts are long: the median statement has
about 660 words and the median proof about 2200 words.
Table~\ref{tab:closure} records how the 273 facts divide: 4 lie in
the supporting closure of the final target fact, computed from the
dependencies recorded with each fact, and the remaining 269 lie outside
it, unused by the paper.

\begin{table}[ht]
\centering\small
\begin{tabular}{lrrrr}
\hline
& Sessions & Input (M) & of which cached (M) & Output (M)\\
\hline
Main agent & 1 & 435.0 & 427.5 & 1.3\\
Workers & 264 & 1349.0 & 1295.4 & 10.4\\
Verifier & 378 & 433.3 & 390.2 & 6.4\\
Helper agents & 94 & 254.4 & 232.6 & 1.9\\
Manuscript and reports & 14 & 44.5 & 41.1 & 0.3\\
Total & 751 & 2516.3 & 2386.7 & 20.3\\
\hline
\end{tabular}
\vspace{6pt}
\caption{Token usage of the run, in millions of tokens.}
\label{tab:tokens}
\end{table}

\begin{table}[ht]
\centering\small
\begin{tabular}{lrr}
\hline
& Count & Share of all facts\\
\hline
Main-theorem closure & 4 & 1.5\%\\
Outside the closure, unused by the paper & 269 & 98.5\%\\
Total & 273 & 100\%\\
\hline
\end{tabular}
\vspace{6pt}
\caption{Distribution of facts in the fact graph.}
\label{tab:closure}
\end{table}

Table~\ref{tab:inclosure} lists the four facts in the closure. Pharos
first established fact A in a relatively early research task. Then,
after a few hours, Pharos applied fact A on another route and obtained
fact B. After a much longer period of research, Pharos obtained fact
C and, combining it with fact B, obtained the final result, fact D.

\begin{table}[ht]
\centering\small
\begin{tabular}{@{}lp{0.26\textwidth}p{0.34\textwidth}p{0.12\textwidth}p{0.11\textwidth}@{}}
\hline
 & Corresponding statements in the paper & Function in the proof & Dependency & Time since run start\\
\hline
Fact A & Corollary~\ref{cor:ruled-realization}. &
Resolves the polynomial map to a morphism from a ruled surface with
the seed as a section. &
--- &
1:27\\[2pt]
Fact B & Lemma~\ref{lem:prescribed-point} and its proof,
Section~\ref{sec:completion}. &
Specializes a fiber of the ruled surface to obtain a rational curve
through each point of the seed; the final step shared with
bend-and-break. &
fact A &
5:42\\[2pt]
Fact C & Proposition~\ref{prop:harmonic} through
Theorem~\ref{thm:realization}. &
Produces, from $d>0$ alone, jets on a line bundle of positive degree
after a finite cover, and makes them an exact polynomial map; the
replacement for Frobenius amplification. &
--- &
19:46\\[2pt]
Fact D & Theorem~\ref{thm:main} and its proof. &
Composing fact C with fact B to obtain the main result. &
facts B, C &
19:57\\
\hline
\end{tabular}
\vspace{6pt}
\caption{The four facts in the main-theorem closure. The column
``Dependency'' lists the direct dependencies recorded in the fact
graph.}
\label{tab:inclosure}
\end{table}

These four facts are only a small part of the entire mathematical
exploration by Pharos. In the course of this exploration, Pharos
proposed seven major routes in total and refined them into 30
sub-routes. Pharos assigned these sub-routes to different workers
and, according to the workers' feedback, proposed new sub-routes. For
the
details of these routes, see Table~\ref{tab:routes} and
Table~\ref{tab:timeline}.

Although the final main theorem uses only four facts, this does not
mean that the exploration behind the remaining 269 facts was wasted.
These facts contributed to the final proof indirectly. They are the
results of the broad exploration of the routes, and the experience
they supplied guided the main agent in judging the routes and in
proposing the new sub-routes from which the final result emerged.

At the start, Pharos proposed the seven routes as a coarse framework
of ideas, largely following the techniques that the literature had
applied to the problem, and explored each of them at length: the
sub-routes opened at the start typically ran for 6 to 17 hours and
produced 15 to 34 facts each (Tables \ref{tab:routes}
and~\ref{tab:timeline}). These initial approaches did not by themselves yield the complete proof, but they were explored extensively
before new sub-routes were proposed, and their experiences became the
material for the next stage.

Through the reflection cycle these coarse ideas were gradually
refined into more detailed ones. After several rounds of judging
which routes were still making progress, Pharos described the
possible routes more precisely: the sub-routes opened in the second
half of the run mostly lasted one to three hours and produced one to
five facts each, and a route judged to have stalled, R1, was
stopped. The context design described above is intended to let the
main agent consider the problem as a whole over a run of this length;
here the main agent worked in a single session throughout
(Table~\ref{tab:tokens}), tracked the progress of the routes, and
reassigned workers between them as its assessment changed
(Table~\ref{tab:timeline}), ending with the stop of all workers once
the target was accepted.

The refinement did not only narrow the existing routes. Summarizing
the experiences of the conventional ones led Pharos to a novel
mathematical idea that arose during the run; it was tested on special
models in R3-2 to R3-4 and then extended in R3-6 to the method of
fact C. The fact graph also let
this route reuse the intermediate results of the others: fact B, on
R6, cites fact A from R3, and fact D, on R3, cites fact B from R6
(Table~\ref{tab:routes}), so different workers borrowed each other's
intermediate results across routes.

Taken together, these observations illustrate the combination of mathematical insight and global strategic judgment described above. Context management, supported by helper agents and the route board, enabled the main agent to sustain high-level mathematical reasoning—synthesizing lessons from dozens of sub-routes into new ideas and redirecting the search accordingly. The fact graph kept earlier results available for reuse, allowing the new method to be combined with advances from other routes in the final proof.

The original run described above received no human input beyond the problem
itself. The experts who posed the problem had, however, also
written down four groups of suggestions before Pharos was run, summarizing their preliminary exploration of the problem. These included negative information about routes they had already examined, together with discussion of several directions that appeared potentially worth pursuing. To measure the effect of such advice we ran an ablation run, in which these suggestions were supplied to Pharos
at launch as supplementary material, through the interface with human operators described above. The ablation run took 11 hours and 38 minutes from the creation of the project to the acceptance of the target, and a further 4 hours and 21 minutes to write and check its own manuscript. Its token usage, metered by a ChatGPT Pro subscription, is about two thirds of the weekly usage limit. The final fact graph contains 118 verified facts, of which 4 form the closure of the target fact. The final proof is essentially the same as that of the original run.

None of the suggested directions became part of the final proof. Their effect was instead to change the order of exploration. Pharos adopted them as its initial routes and, within the first hour, found verified counterexamples showing where the obstruction in each direction lies. These tests also produced, at 0:53 and 1:38, two of the four facts in the final closure; in the original run, the corresponding advances appeared at 5:42 and 19:46, respectively. More importantly, the failed routes isolated the remaining obstruction: the positive directions obtained after finite covers had to be integrated. The main agent recorded this gap on the route board at 1:31 and assigned a worker to it at 3:47. That line reached the weighted-jet statement at 10:46, compared with 15:43 in the original run, and reached the final target at 11:38, more than eight hours earlier than the original run. Thus, in this case, the experts' exclusionary guidance accelerated Pharos's path to the solution.

This ablation illustrates one productive mode of interaction between mathematicians and Pharos. In this mode, Pharos can incorporate expert guidance without requiring it to determine the eventual route, testing suggested directions at scale, retaining useful experience from both successful and failed attempts, and redirecting the mathematical exploration accordingly.

\begin{table}[htp]
\centering\footnotesize
\setlength{\tabcolsep}{4pt}
\begin{tabular}{@{}llrlp{0.50\textwidth}@{}}
\hline
Route & Sub-route & Facts & Main-theorem closure & Approach\\
\hline
R1 & \begin{tabular}[t]{@{}l@{}}\\R1-1\\R1-2\end{tabular} & \begin{tabular}[t]{@{}r@{}}28\\2\\26\end{tabular} &  &
Carry the positivity of $-K_X$ on the seed as a Ricci reserve along area-decreasing deformations of the seed disk, accounting for the losses at caps and boundaries, so as to obtain marked disks of bounded area through $f(p)$.\\[4pt]
R2 & \begin{tabular}[t]{@{}l@{}}\\R2-1\\R2-2\\R2-3\\R2-4\\R2-5\\R2-6\end{tabular} & \begin{tabular}[t]{@{}r@{}}57\\32\\15\\5\\2\\2\\1\end{tabular} &  &
Amplify the numerical positivity by finite covers and cuts of the seed curve, then glue the pieces back at a boundary cost, so as to obtain a repeatable descent of marked area.\\[4pt]
R3 & \begin{tabular}[t]{@{}l@{}}\\R3-1\\R3-2\\R3-3\\R3-4\\R3-5\\R3-6\end{tabular} & \begin{tabular}[t]{@{}r@{}}48\\34\\4\\3\\2\\3\\2\end{tabular} & \begin{tabular}[t]{@{}l@{}}\\fact A\\\\\\\\\\facts C, D\end{tabular} &
Realize the positive directions of $f^*T_X$ as a surface germ with positive normal bundle along the seed curve, whose closure is a ruled surface with the seed as a section (tested on special models in R3-2 to R3-4; the method of fact C in R3-6).\\[4pt]
R4 & \begin{tabular}[t]{@{}l@{}}\\R4-1\\R4-2\\R4-3\\R4-4\\R4-5\end{tabular} & \begin{tabular}[t]{@{}r@{}}43\\17\\18\\3\\4\\1\end{tabular} &  &
Take limits of nearly closed positive currents made from disks, separating the retained principal disk from the rational curves that split off.\\[4pt]
R5 & \begin{tabular}[t]{@{}l@{}}\\R5-1\\R5-2\\R5-3\\R5-4\\R5-5\end{tabular} & \begin{tabular}[t]{@{}r@{}}53\\27\\5\\9\\4\\8\end{tabular} &  &
Deform the seed inside a family of embedded curves, the component containing a cover of the seed, out to the boundary of the family, where rational curves of positive area appear.\\[4pt]
R6 & \begin{tabular}[t]{@{}l@{}}\\R6-1\\R6-2\\R6-3\\R6-4\end{tabular} & \begin{tabular}[t]{@{}r@{}}40\\20\\15\\1\\4\end{tabular} & \begin{tabular}[t]{@{}l@{}}\\fact B\\\\\\\end{tabular} &
Control disk boundaries by equal-period fillings in foliated tubes; later, construct an adjoint contraction cohomologically and look for rational curves in its fibers.\\[4pt]
R7 & \begin{tabular}[t]{@{}l@{}}\\R7-1\\R7-2\end{tabular} & \begin{tabular}[t]{@{}r@{}}4\\2\\2\end{tabular} &  &
Finite-area marked compactness and the criteria that extract a marked sphere from a degenerating family, the tools that every other route was meant to feed.\\[4pt]
\hline
\end{tabular}
\vspace{6pt}
\caption{Routes and sub-routes. The ``Facts'' column reports the number of accepted facts produced by each route.}
\label{tab:routes}
\end{table}

\clearpage

\begin{table}[htp]
\centering\footnotesize
\setlength{\tabcolsep}{4pt}
\begin{tabular}{@{}r*{7}{l}@{}}
\hline
Hour & \begin{tabular}[t]{@{}l@{}}W1\\(xhigh)\end{tabular} & \begin{tabular}[t]{@{}l@{}}W2\\(xhigh)\end{tabular} & \begin{tabular}[t]{@{}l@{}}W3\\(xhigh)\end{tabular} & \begin{tabular}[t]{@{}l@{}}W4\\(max)\end{tabular} & \begin{tabular}[t]{@{}l@{}}W5\\(max)\end{tabular} & \begin{tabular}[t]{@{}l@{}}W6\\(max)\end{tabular} & \begin{tabular}[t]{@{}l@{}}W7\\(max)\end{tabular}\\
\hline
 0 & R7-1 & R7-2$\to$R6-1 & R4-1 & R1-1 & R2-1 & R3-1 & R2-2\\
 1 & R7-1$\to$R5-1 & R6-1 & R4-1 & R1-2 & R2-1 & R3-1 \textbf{fact A} & R2-2\\
 2 & R5-1 & R6-1 & R4-1 & R1-2 & R2-1 & R3-1 & R2-2\\
 3 & R5-1 & R6-1 & R4-1 & R1-2 & R2-1 & R3-1 & R2-2\\
 4 & R5-1 & R6-1 & R4-1 & R1-2 & R2-1 & R3-1 & R2-2\\
 5 & R5-1 & R6-1 \textbf{fact B} & R4-1 & R1-2 & R2-1 & R3-1 & R2-2\\
 6 & R5-1 & R6-1 & R4-1 & R1-2 & R2-1 & R3-1 & R2-2$\to$R5-2\\
 7 & R5-1 & R6-1 & R4-1$\to$R4-2 & R1-2 & R2-1 & R3-1 & R5-2\\
 8 & R5-1 & R6-1 & R4-2 & R1-2 & R2-1 & R3-1 & R5-2\\
 9 & R5-1 & R6-1$\to$R6-2 & R4-2 & R1-2 & R2-1 & R3-1 & R5-2$\to$R5-3\\
10 & R5-1 & R6-2 & R4-2 & R1-2 & R2-1 & R3-1 & R5-3\\
11 & R5-1 & R6-2 & R4-2 & R1-2 & R2-1 & R3-1 & R5-3\\
12 & R5-1 & R6-2 & R4-2 & R1-2 & R2-1 & R3-1 & R5-3\\
13 & R5-1 & R6-2 & R4-2 & R1-2 & R2-1$\to$R2-3 & R3-1$\to$R5-4 & R5-3\\
14 & R5-1$\to$R3-2 & R6-2 & R4-2 & R1-2 & R2-3 & R5-4 & R5-3$\to$R5-5\\
15 & R3-2$\to$R3-3 & R6-2 & R4-2$\to$R4-3 & R1-2 & R2-3 & R5-4 & R5-5\\
16 & R3-3 & R6-2$\to$R6-3 & R4-3 & R1-2 & R2-3$\to$R2-4 & R5-4 & R5-5\\
17 & R3-3$\to$R3-4 & R6-4 & R4-4 & R1-2$\to$R3-5 & R2-4 & R5-4 & R5-5\\
18 & R3-4$\to$R3-6 & R6-4 & R4-4 & R3-5 & R2-4 & R5-4 & R5-5\\
19 & R3-6 \textbf{facts C, D} & R6-4 & R4-4$\to$R4-5 & R2-5 & R2-4$\to$R2-6 & R5-4 & R5-5\\
\hline
\end{tabular}
\vspace{6pt}
\caption{Workers by hour after the creation of the project. A cell
names the sub-route of Table~\ref{tab:routes} assigned to the worker.
An arrow indicates a task reassignment.}
\label{tab:timeline}
\end{table}

Beyond finding and checking a proof, we also want it formalized, and
formalized quickly. A paper produced by Pharos has passed the
system's own informal verifier, but not a rigorous check, and before
mathematicians accept it, it must still be checked by humans. A formal
proof of the main statements and of the key steps of the method
reduces this check to the statements themselves: it suffices to
confirm that the formal statements, with the definitions they use, say
what the paper claims. As AI systems produce efficiently, fast and autonomous
formalization helps mathematicians confirm their correctness quickly.
SHEAF is built for this class of tasks.

Mathematicians use systems such as Pharos on frontier problems, and these problems often lie beyond what Mathlib
supports. Moreover, an AI system often uses techniques from fields
other than that of the problem. These features make such papers hard
to formalize and raise two challenges. First, a statement can be formalized only after every
definition it uses has been unfolded down to Mathlib; here a correct
formal statement of the main theorem alone requires about 15{,}600
lines. Second, the proofs invoke standard theories that are absent
from Mathlib and are themselves hard to formalize. Formalizing such a paper quickly is therefore
not a matter of proving harder lemmas, but of deciding what must be
built at all.

The core of SHEAF is the pruning of its dependency graph. SHEAF
unfolds the paper into a directed acyclic graph (DAG) whose nodes are
the statements of the argument, each with a self-contained
natural-language proof, down to results already in Mathlib. Yet the natural-language proof is often
not the easiest route to a formal one: a step may cite a general
standard theorem where a special case suffices, or a route much easier
to formalize may exist. Agents therefore fill in the proofs from the
targets downwards, taking only what each step actually uses; every
edge left unused is deleted, and every node that no longer leads to a
target leaves the work queue. This pruning is safe because the statements of all nodes are
formalized, from Mathlib upwards, and the targets locked before any
proof is attempted. Of the 1{,}675 nodes of the initial DAG, 1{,}036 needed a proof; the
rest were already in Mathlib. Of these 1{,}036 nodes, 745 were proved and 289 were pruned. Without
pruning, those 289 would also have added at least
40\% to the work actually done. The actual saving is far larger,
since the pruned nodes include textbook theorems such as Serre
duality, the resolution of singularities of surfaces and the
Nakai--Moishezon criterion, none of which had to be proved.

The formalization was carried out alongside the authors' revision of
the manuscript and was completed within a week.
Since the paper gives a new proof of a known theorem, the key
intermediate results of its method had to be formalized as well as
the main theorem. These results changed from draft to draft, so the
formalization also produced material that the final text no longer
uses. At completion the
library held about 445{,}000 lines of Lean in about 3{,}000 modules. The final version formalizes the main theorem and eight key
intermediate results of the final text; after duplicates and unused
material were removed, it keeps more than 300{,}000 lines in more than
2{,}000 modules. The run used GPT-6 Astra, GPT-5.6 Sol and Claude
Fable~5.1, at a total API cost of almost US\$50{,}000.
Because resources were limited, about fifteen agents ran concurrently
on average.

\clearpage

\end{document}